\documentclass{article}

\usepackage[T1]{fontenc}
\usepackage[utf8]{inputenc}

\usepackage[margin=3cm]{geometry}
\setlength{\parskip}{1em}
\setlength{\parindent}{0pt}
\usepackage{graphicx}
\usepackage{authblk}

\usepackage{amsmath,amssymb,amsthm}
\usepackage{mathtools}

\usepackage[
natbib,
maxcitenames=3, 
maxbibnames=10,  
sorting=nyt,
url=false,
sortcites,
defernumbers,
backref,
backend=biber
]{biblatex}
\addbibresource{literature.bib}

\makeatletter
\let\blx@rerun@biber\relax
\makeatother

\usepackage{hyperref}
\usepackage{subcaption}

\usepackage{todonotes}
\usepackage{url}

\usepackage[nameinlink, capitalise, noabbrev]{cleveref}
\crefname{assumption}{Assumption}{Assumptions}
\crefname{proposition}{Proposition}{Propositions}
\crefname{corollary}{Corollary}{Corollaries}
\crefname{lemma}{Lemma}{Lemmata}
\crefname{definition}{Definition}{Definitions}
\crefname{theorem}{Theorem}{Theorems}
\crefname{remark}{Remark}{Remarks}
\crefname{algocf}{Algorithm}{Algorithms}

\usepackage[ruled]{algorithm2e}
\usepackage{enumitem}

\usepackage{tikz}
\usetikzlibrary{matrix, positioning, fit, arrows.meta, calc}
\DeclareMathOperator{\prox}{prox}

\DeclareMathOperator*{\argmin}{arg\,min}

\DeclareMathOperator{\TV}{TV}
\DeclareMathOperator{\TGV}{TGV}
\DeclareMathOperator{\Law}{Law}

\DeclareMathOperator{\spanop}{span}
\DeclareMathOperator{\tr}{tr}
\DeclareMathOperator{\Var}{Var}

\renewcommand{\epsilon}{\varepsilon}
\renewcommand{\phi}{\varphi}

\newcommand{\norm}[1]{\lVert #1  \rVert}

\newcommand{\inpro}[2]{\langle #1,#2 \rangle}
\newcommand{\abs}[1]{\left\vert #1 \right\vert}

\newcommand{\bbN}{\mathbb{N}}
\newcommand{\bbR}{\mathbb{R}}
\newcommand{\bbE}{\mathbb{E}}

\newcommand{\calA}{\mathcal{A}}

\newcommand{\calF}{\mathcal{F}}

\newcommand{\calL}{\mathcal{L}}
\newcommand{\calM}{\mathcal{M}}
\newcommand{\calN}{\mathcal{N}}
\newcommand{\calO}{\mathcal{O}}
\newcommand{\calP}{\mathcal{P}}
\newcommand{\calT}{\mathcal{T}}
\newcommand{\calU}{\mathcal{U}}
\newcommand{\calV}{\mathcal{V}}
\newcommand{\calW}{\mathcal{W}}
\newcommand{\calX}{\mathcal{X}}
\newcommand{\calY}{\mathcal{Y}}
\newcommand{\calZ}{\mathcal{Z}}

\newcommand{\rmd}{\,\mathrm{d}}

\newcommand{\tup}[1]{\text{\textup{#1}}}

\newtheorem{theorem}{Theorem}[section]
\newtheorem{proposition}[theorem]{Proposition}
\newtheorem{lemma}[theorem]{Lemma}
\newtheorem{corollary}[theorem]{Corollary}
\newtheorem{remark}[theorem]{Remark}
\newtheorem{example}[theorem]{Example}
\newtheorem{definition}[theorem]{Definition}

\numberwithin{equation}{section}

\title{Analysis of Primal-Dual Langevin Algorithms}
\author[1,2]{Martin Burger}
\author[3]{Matthias J. Ehrhardt}
\author[1,4,*]{Lorenz Kuger}
\author[1]{Lukas Weigand}
\affil[1]{\small Helmholtz Imaging, Deutsches Elektronen-Synchrotron DESY, Notkestr. 85, 22607 Hamburg, Germany.}
\affil[2]{\small Fachbereich Mathematik, Universität Hamburg, Bundesstrasse 55, Hamburg, 20146, Germany.}
\affil[3]{\small Department of Mathematical Sciences, University of Bath, Claverton Down, Bath BA2 7AY, United Kingdom.}
\affil[4]{\small Department Mathematik, Friedrich-Alexander-Universität Erlangen-Nürnberg, Cauerstr. 11, 91058 Erlangen, Germany.}
\affil[*]{\small Corresponding author -- \href{mailto:lorenz.kuger@desy.de}{lorenz.kuger@desy.de}}

\begin{document}

\maketitle

\begin{abstract}
    We analyze a recently proposed class of algorithms for the problem of sampling from probability distributions $\mu^\ast$ in $\mathbb{R}^d$ with a Lebesgue density of the form $\mu^\ast(x) \propto \exp(-f(Kx)-g(x))$, where $K$ is a linear operator and $f,g$ convex and non-smooth. The method is a generalization of the primal-dual hybrid gradient optimization algorithm to a sampling scheme. We give the iteration's continuous time limit, a stochastic differential equation in the joint primal-dual variable, and its mean field limit Fokker-Planck equation. Under mild conditions, the scheme converges to a unique stationary state in continuous and discrete time. Contrary to purely primal overdamped Langevin diffusion, the stationary state in continuous time does not have $\mu^\ast$ as its primal marginal. Thus, further analysis is carried out to bound the bias induced by the partial dualization, and potentially correct for it in the diffusion. Time discretizations of the diffusion lead to implementable algorithms, but, as is typical in Langevin Monte Carlo methods, introduce further bias. We prove bounds for these discretization errors, which allow to give convergence results relating the produced samples to the target. We demonstrate our findings numerically first on small-scale examples in which we can exactly verify the theoretical results, and subsequently on typical examples of larger scale from Bayesian imaging inverse problems.
\end{abstract}

\textbf{Keywords:} Langevin sampling, Markov chain Monte Carlo, Fokker-Planck equations, Coupling analysis, Primal-dual algorithms, Bayesian inference, Inverse problems

\textbf{MSC codes:} 35Q84, 47A52, 49N45, 60H10, 62F15, 65J22, 68U10

\section{Introduction}
We analyze a recently proposed class of algorithms for sampling from a probability distribution on $\calX \coloneqq \bbR^d$ with log-concave, non-smooth density.
We are interested in target distributions with Lebesgue densities of the form
\begin{equation}\label{eq:target}
    \mu^\ast(x) = \frac{\exp(-f(Kx)-g(x))}{\int \exp(-f(Kw)-g(w))\rmd w},
\end{equation}
where we assume that the integral in the denominator exists and is finite. The operator $K : \calX \to \bbR^m \eqqcolon \calY $ is assumed to be linear and $f:\calY \to \bbR \cup \{\infty\}$, $g:\calX \to \bbR\cup \{\infty\}$ will be called potential terms and are both assumed convex and lower semi-continuous but not necessarily differentiable. We will denote the full potential by $ f \circ K + g \eqqcolon h : \calX \to \bbR \cup \{\infty\}$.

A standard class of algorithms to draw samples in Bayesian inference are Markov chain Monte Carlo (MCMC) methods. 
Compared to other types of MCMC methods, sampling schemes based on Langevin diffusion processes have proved efficient in inverse problems and imaging applications due to the preferable scaling behaviour to high dimensions \cite{Roberts1996,Durmus2022,Durmus2019}. 
Typically, MCMC sampling algorithms based on Langevin diffusion perform a time discretisation of the stochastic differential equation (SDE)
\begin{equation}\label{eq:overdamped-langevin}
    \mathrm d X_t = \nabla \log \mu^\ast(X_t)\rmd t + \sqrt{2}\rmd W_t,
\end{equation}
where $W_t$ is standard $\bbR^d$-valued Brownian motion. Assuming $\mu^\ast$ satisfies a log-Sobolev inequality, it is the unique invariant distribution of the SDE and the law of $X_t$ will converge to $\mu^\ast$ as $t$ grows.
The standard Euler-Maruyama discretization of \eqref{eq:overdamped-langevin} takes the form
\begin{equation}\label{eq:ula}
    \left\{ \begin{array}{l}
    \xi^{n+1} \sim \mathrm{N}(0,I_d)\\
    X^{n+1} = X^n + \tau \nabla \log \mu^\ast(X_k) + \sqrt{2\tau} \xi^{n+1},
    \end{array}\right.
\end{equation}
this iteration is also known as unadjusted Langevin algorithm (ULA). ULA and other time discretizations of \eqref{eq:overdamped-langevin} introduce a bias between the law of the samples and the target $\mu^\ast$, hence many works have been concerned with the characterization of the invariant distribution of the Markov chain $(X^n)_n$ and its bias \cite{Roberts1996,Durmus2017,Wibisono2018,Dalalyan2019}. 
Other typical questions concern preconditioning and convergence speed of the Markov chain and the behaviour of Metropolis-Hastings correction steps to overcome the bias and draw unbiased samples from $\mu^\ast$.

In this work, we are concerned with a new type of Langevin sampling algorithm which has been proposed as \textit{Unadjusted Langevin Primal-Dual Algorithm} (ULPDA) in \cite{Narnhofer2022} and has since been mentioned in \cite{Lau2023,Habring2023}.
Unlike ULA or its variants, ULPDA can not be interpreted as a time discretization of \eqref{eq:overdamped-langevin}. 
It rather consists in steps of a primal-dual optimization method, where the same stochastic term $\sqrt{2\tau}\xi$ as in ULA is added to the primal iterate. The iteration takes the form
\begin{equation}\label{eq:pd_sampling_update}
    \left\{ \begin{array}{l}
        \xi^{n+1} \sim \mathrm{N}(0,I_d)\\
        Y^{n+1} = \prox_{\sigma f^\ast}\left(Y^n+\sigma K X_\theta^n \right)\\
        X^{n+1} = \prox_{\tau g}(X^n - \tau K^T Y^{n+1}) + \sqrt{2\tau} \xi^{n+1}\\
        X_\theta^{n+1} = X^{n+1} + \theta(X^{n+1}-X^n),
    \end{array}\right.
\end{equation}
where $\tau, \sigma > 0$ are step sizes, $\theta > 0$ is an overrelaxation parameter, $f^\ast$ denotes the convex conjugate of $f$ and the proximal mapping of a convex function $\phi$ is the unique value $\prox_{\gamma \phi} (z) \coloneqq (id + \gamma \partial \phi)^{-1}(z)$. 

The motivation for this update comes from the optimization perspective: The ULA iteration \eqref{eq:ula} can be interpreted as gradient descent with an additive stochastic term. While ULA requires to know the gradients of the potentials, the advantage of primal-dual algorithms is that the iteration can be applied without the need to evaluate any gradients or even the proximal mapping of the composition $f \circ K$. $f$ and $g$ need only be convex, not necessarily differentiable. Hence, it appears desirable to obtain more general sampling algorithms by adding stochasticity to other first-order optimization methods like the primal-dual update.

This is, of course, not a rigorous argument for convergence, and until now, to the best of the authors' knowledge, no convergence theory for ULPDA has been provided. Indeed, the situation seems to be more intricate: When we run ULPDA on a simple toy example, see \cref{fig:1d-gaussian-motivational-example}, the distribution of the samples seems to depend more critically on the choice of step size hyperparameters $\tau,\sigma$ of the algorithm than it is known from ULA.

\begin{figure}[ht]
    \centering
    \includegraphics[width=\linewidth]{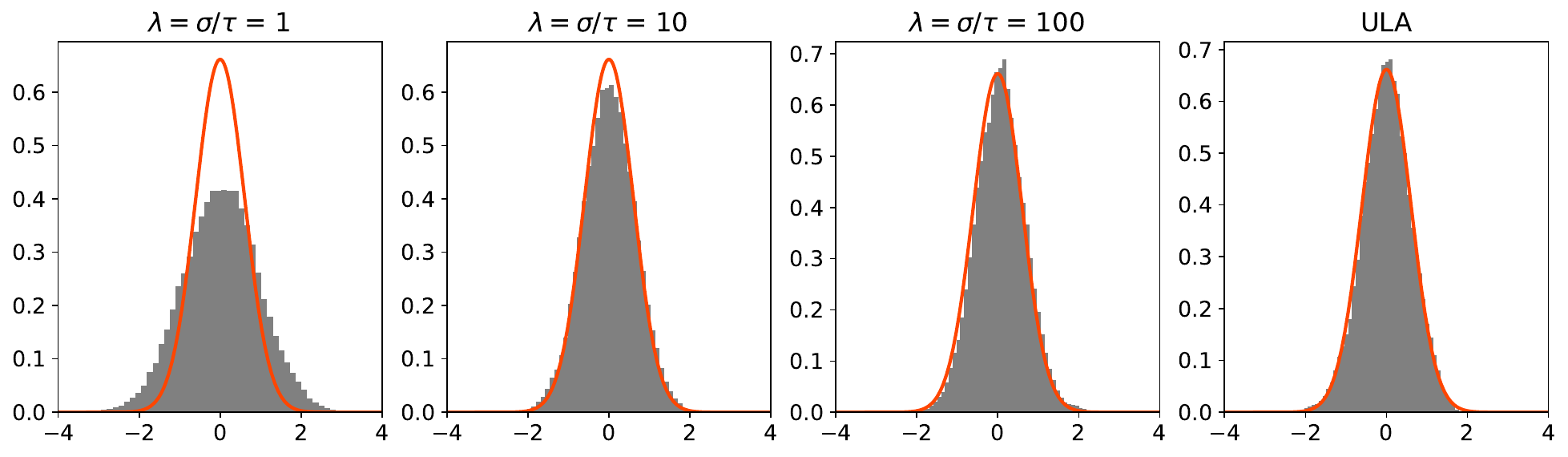}
    \caption{In a simple toy example, we choose $f,g : \bbR \to \bbR$ to be quadratic functions, $K >0$ a real number, so that the target $\mu^\ast$ is Gaussian with the density depicted in orange.
    We run ULPDA with very small primal step size $\tau$ and sufficient burn-in period and repeat the experiment for three different ratios $\sigma/\tau$. The gray histograms show the distribution of samples produced by ULPDA. 
    For comparison, the right figure is produced by ULA with the same primal step size $\tau$.}
    \label{fig:1d-gaussian-motivational-example}
\end{figure}

\subsection{Related Works}
There is a vast literature on unadjusted Langevin algorithms, where we use the term to refer not only to \eqref{eq:ula}, but any time discretization of \eqref{eq:overdamped-langevin}. The convergence analysis is based on the simple fact that the target distribution is the stationary distribution of the SDE \eqref{eq:overdamped-langevin} and that the dynamics converge to the target in continuous time \cite{Roberts1996}. As an underlying concept, this convergence can be explained by drawing the connection between the diffusion dynamics and, equivalently, the gradient flow of relative entropy with respect to the target in Wasserstein space \cite{Jordan1998,Wibisono2018}. Standard convergence results on ULA typically require a Lipschitz continuous gradient of the potential in order to bound the error introduced by time discretization \cite{Dalalyan2019,Durmus2019,Durmus2019a}. 

Since non-smooth densities do not satisfy this gradient Lipschitz condition, many works have treated the extension to the non-smooth case. Generally, the approaches can be divided into two classes. The first regularizes non-smooth terms, resulting in a modified, smooth target and applies typical ULA to this modified distribution. The challenge then typically consists in bounding the terms between the target and the modified distribution, and numerically in trading off the introduced bias and convergence speed via the smoothing parameter \cite{Pereyra2016,Durmus2022,Durmus2018,Lau2022,Cai2023}. The second approach consists in finding different discretizations of \eqref{eq:overdamped-langevin}, i.e. implicit or semi-implicit schemes, that are implementable and provably convergent. These schemes are usually referred to as proximal or implicit Langevin methods, see e.g. \cite{Pereyra2016,Bernton2018,Salim2020,Klatzer2023,Ehrhardt2023}. A work of particular relevance for our setting is \cite{Habring2023}, since the authors assume the same type of splitting of the potential $h = f\circ K + g$ as we do. Their method `Prox-Sub' can be understood as a first-order forward-backward discretization of \eqref{eq:overdamped-langevin}, with the forward step in $f\circ K$ realized as a subgradient since $f$ is assumed non-smooth. We will see that this method is intimately connected to the primal-dual sampling algorithm since it forms the limiting case in which the primal-dual stationary distribution can be related to the target. In \cite{Narnhofer2022}, the authors proposed the primal-dual scheme that we consider here, in order to generalize from the class of methods that employ purely primal discretizations of \eqref{eq:overdamped-langevin}. A partial dualization of the potential is also considered in \cite{Cai2023}, but the scheme differs from the one considered here in that it runs an inner iteration to solve the dual optimality condition. Further, the algorithm in \cite{Cai2023} assumes smoothness of $g$ since the updates require evaluation of $\nabla g$. We also refer to \cite{Lau2023} for a recent comparison of current methods applicable in the non-smooth case.

Constraints on the state space can be viewed as a special case of non-smooth potentials, where the constraint is enforced by adding an indicator function to the potential. Several works have considered this special case usually by incorporating a proximal (which amounts to a projection in the case of an indicator) or a reflection step \cite{Brosse2017,Bubeck2018,Melidonis2023}. Mirroring the Langevin dynamics can be another approach to precondition the dynamics as well as to enforce geometrical constraints \cite{Girolami2011,Ahn2021,Li2021,Lau2022}.

We also mention that due to the dualization, the primal-dual Langevin sampling strategy can be interpreted as an alternating stochastic descent-ascent scheme. There is a more general interest in the analysis of Langevin descent-ascent schemes in the context of saddle point problems, e.g. for the recovery of Nash equilibria, see \cite{Wang2024}. Our results give answers on the convergence in continuous and discretized time for the case where the saddle point functional is of a Lagrangian form obtained from rewriting a convex optimization problem as a saddle point problem.

\subsection{Contributions}
Our main contributions are the following (see also \cref{fig:visualization-contributions} for an overview).
\begin{itemize}
    \item For the algorithm's continuous time limit, we prove a contraction result using a coupling argument. This guarantees the dynamics are asymptotically stable and converge to a stationary distribution $\pi_\lambda(x,y)$, where $(x,y)\in\calX\times \calY \eqqcolon \calZ$ in the joint space of primal and dual variable. Crucially, the distribution depends on $\lambda > 0$, a constant coupling primal and dual time scales.
    \item We show that the marginal of $\pi_\lambda$ in the primal variable $x$ is not the target $\mu^\ast$. This can be interpreted as being a consequence of $\pi_\lambda$ not concentrating its mass on a lower-dimensional submanifold of $\calX\times\calY$, although the diffusion coefficient is degenerate with diffusion only acting directly on $x$. In the case of smooth $f^\ast,g$, we prove this property called hypoellipticity.
    \item We consider two potential frameworks for ``correcting'' the bias between the stationary distribution $\pi_\lambda$ and $\pi^\ast$ (a concentrated push-forward of the target $\mu^\ast$ to $\calZ$). Firstly, a proposed modification of the dynamics with non-homogeneous diffusion coefficient can be employed to force $\pi^\ast$ to be the stationary solution. The target becomes the stationary distribution, but the discretization of the diffusion SDE is impractical. Secondly, we consider the unmodified scheme, but with the ratio $\lambda$ of dual and primal step sizes in the algorithm growing to infinity. The limit recovers the purely primal overdamped Langevin diffusion. Under regularity assumptions on the dualized potential $f$, we show convergence of $\pi_\lambda$ to $\pi^\ast$.
    \item In discrete time, we show stability of the algorithm in the sense that Wasserstein distances between two different instances are bounded if the potentials are convex. Under stronger regularity assumptions on $f^\ast, g$, there exists a stationary solution $\pi_{\lambda,\tau}$ to which the distribution of the iterates converges. We then provide a bound of the bias, i.e. a bound on the Wasserstein distance between $\pi_\lambda$ and $\pi_{\lambda,\tau}$. This allows distance bounds between the generated samples and $\mu^\ast$.
\end{itemize}

\newlength{\topcellheight}\setlength{\topcellheight}{2.6cm}%
\newlength{\topcellboxheight}\setlength{\topcellboxheight}{2.0cm}%
\newlength{\bottomcellheight}\setlength{\bottomcellheight}{2cm}%
\newlength{\bottomcellboxheight}\setlength{\bottomcellboxheight}{1cm}%
\newlength{\cellwidth}\setlength{\cellwidth}{0.38\linewidth}%
\newlength{\cellboxwidth}\setlength{\cellboxwidth}{0.34\linewidth}%
\newlength{\topheight}\setlength{\topheight}{0.6cm}%
\newlength{\leftwidth}\setlength{\leftwidth}{0.2\linewidth}%
\newlength{\leftboxwidth}\setlength{\leftboxwidth}{0.16\linewidth}%
\begin{figure}
    \centering
    \begin{tikzpicture}[
    s1top/.style={align=center,
    minimum height=\topcellheight,
    minimum width=\cellwidth,
    inner xsep=0.02\linewidth,
    inner ysep=0.3cm},
    s1bottom/.style={align=center,
    minimum height=\bottomcellheight,
    minimum width=\cellwidth,
    inner xsep=0.02\linewidth,
    inner ysep=0.3cm},
    s2top/.style={align=center,
    minimum height=\topcellheight,
    minimum width=\leftwidth,
    inner xsep=0.02\linewidth,
    inner ysep=0.3cm},
    s2bottom/.style={align=center,
    minimum height=\bottomcellheight,
    minimum width=\leftwidth,
    inner xsep=0.02\linewidth,
    inner ysep=0.3cm},
    s3/.style={align=center,
    minimum height=\topheight,
    minimum width=\cellwidth,
    inner xsep=0.02\linewidth,
    inner ysep=0.3cm}
    ]
        \matrix (m) [column sep=0cm, row sep=0cm]
        {
            & \node[s3](m12){Primal-dual ($0 < \lambda < \infty$)}; & \node[s3](m13){Primal ($\lambda = \infty$)}; \\
            \node[s2top](m21){\parbox[t][\topcellboxheight][t]{\leftboxwidth}{\centering Continuous time}};&
            \node[s1top](m22){\parbox[t][\topcellboxheight][t]{\cellboxwidth}{\centering Contraction to $\pi_\lambda$ \{\ref{thm:convergence-pd-ct}\},\\Inconsistency \{\ref{thm:1d-gauss-example},\ref{thm:1d-gauss-no-homogeneous-correction}\},\\Non-concentration \{\ref{thm:pd-hypoellipticity}\},\\Corrected SDE \{\ref{thm:stationary-soln-of-modified-fp}\}.}};
            & \node[s1top](m23){\parbox[t][\topcellboxheight][t]{\cellboxwidth}{\centering Stationary solution\\ $\pi^\ast = (id,\nabla f \circ K)_\# \mu^\ast $\\with the target $\mu^\ast$.}};\\
            \node[s2bottom](m31){\parbox[t][\bottomcellboxheight][b]{\leftboxwidth}{\centering Discretized time\vspace{0.1em}}};&
            \node[s1bottom](m32){\parbox[t][\bottomcellboxheight][b]{\cellboxwidth}{\centering Stability \{\ref{thm:dt-stability-convex}\},\\Contraction to $\pi_{\lambda,\tau}$ \{\ref{thm:dt-contraction-strongly-convex}\}.}};& 
            \node[s1bottom](m33){\parbox[t][\bottomcellboxheight][b]{\cellboxwidth}{\centering ULA and variants from\\ other discretizations of \eqref{eq:overdamped-langevin}.}}; \\
        };
    
        \draw[thick] (m21.north west) -- (m23.north east);
        \draw[thick] (m31.north west) -- (m33.north east);
        \draw[thick] (m12.north west) -- (m32.south west);
        \draw[thick] (m13.north west) -- (m33.south west); 
    
        \node[draw, thick, inner sep=4pt, anchor=center,align=center,fill=white,minimum height=1cm] (box1) at ($(m22.center)!0.5!(m23.center)$) {\{\ref{thm:ct-convergence-to-overdamped-langevin}\}\\\vspace{-0.8em}};
        \draw[<->] ($(box1.west)!0.6!(box1.south)$) -- ($(box1.east)!0.6!(box1.south)$);
    
        \node[draw, thick, inner sep=4pt, anchor=center, fill=white, align=right] (box2) at ($(m22.center)!0.59!(m32.center)$) {\quad \{\ref{thm:convergence-to-pi-lambda}\}};
        \node (aux1) at ($(box2.north)!0.5!(box2.north west)$) {};
        \node (aux2) at ($(box2.south)!0.5!(box2.south west)$) {};
        \draw[<->] ($(aux1)!0.5!(box2.west)$) -- ($(aux2)!0.5!(box2.west)$);
    
        \node[draw, thick, inner sep=4pt, anchor=center, fill=white, align=right] (box3) at ($(m23.center)!0.5!(m33.center)$) {\qquad other works,\\ e.g. \cite{Habring2023} for \\ \qquad $h = f\circ K + g$};
        \node (aux3) at ($(box3.north)!0.5!(box3.north west)$) {};
        \node (aux4) at ($(box3.south)!0.5!(box3.south west)$) {};
        \draw[<->] ($(aux3)!0.5!(box3.west)$) -- ($(aux4)!0.5!(box3.west)$);
    \end{tikzpicture}
    \caption{Overview of the results in this paper. The central theorems are indicated in curled brackets.}
    \label{fig:visualization-contributions}
\end{figure}

The manuscript is organized as follows. In \cref{sec:problemsetting}, we give the considered sampling procedure based on primal-dual optimization and derive its continuous time limit SDE. Since the continuous time limit of the algorithm does not coincide with overdamped Langevin diffusion, we devote \cref{sec:continuous-time} to the analysis of the continuous time setting and its mean field limit Fokker-Planck equation.
The stability and convergence analysis of the algorithm in discrete time is carried out in \cref{sec:discrete-time}. We present numerical illustrations of our most important findings in \cref{sec:numerics} and conclude in \cref{sec:conclusion}.
\section{Primal-Dual Langevin Sampling Schemes}\label{sec:problemsetting}
Although the algorithmic approach is not limited to this setting, in the context of Bayesian inference, the target distribution $\mu^\ast$ is typically a posterior distribution. Such a posterior target is defined through the likelihood $p(z|x)$ of observing $z$ given the true parameter $x$, combined with a prior distribution $p(x)$. It is given by
$$p(x|z) = \frac{1}{Z} p(z | x) p(x),$$
where the normalization constant $Z = \int p(z | w) p(w) \rmd w$ is typically intractable in high dimensions. 
As an example class of problems that we consider, in Bayesian inference for imaging inverse problems the distributions are typically high-dimensional and involve non-smooth terms. This is because model-based priors on image distributions often assume sparse coefficients in some basis, the recovery of which corresponds to minimization of non-smooth functions \cite{Candes2006}. Therefore, many relevant posteriors $p(x|z)$ fall into the class of targets given in \eqref{eq:target}. We start by explaining the motivation for the primal-dual sampling from the point of view of convex, non-smooth optimization.

\subsection{Motivation from Primal-Dual Optimization}
A common task in Bayesian inverse problems is the computation of the maximum a posteriori estimate (MAP), i.e. the mode of the distribution $\mu^\ast = p(\cdot|z)$. This task forms the connection between Bayesian formulations of inverse problems and variational regularization approaches, since it amounts to the problem of computing the solution to the following minimization problem
\begin{equation}\label{eq:composite-opt-problem}
    \argmin_x f(Kx) + g(x).
\end{equation}
In order to solve \eqref{eq:composite-opt-problem}, depending on the structure of $f$, $K$ and $g$ it can be helpful to employ techniques from convex duality on parts of the potential $h = f\circ K + g$. A popular approach consists in reformulating the variational problem \eqref{eq:composite-opt-problem} to the saddle point problem
\begin{align}
    \min_x h(x) &= \min_x (g(x) + \max_y ( \langle Kx, y\rangle - f^*(y)) \notag \\
    &= \min_x \max_y s(x,y) \label{eq:map-problem-saddle-point-form}
\end{align}
with $s(x,y) := g(x) + \langle Kx, y\rangle - f^*(y) $. Here $f^\ast$ denotes the convex conjugate of $f$ defined by
\begin{equation*}
    f^\ast(y) := \sup_x \left\{ \langle x,y \rangle - f(x) \right\}.
\end{equation*}
Under mild assumptions on $g$ and $f^\ast$, one can prove existence of a saddle point $(x^\ast,y^\ast)$ (see section 19 in \cite{Bauschke2011} for details) the primal part $x^\ast$ of which is a solution to the original problem \eqref{eq:composite-opt-problem}. Instead of minimizing $h$, one can then employ primal-dual type algorithms to solve the saddle point problem \eqref{eq:map-problem-saddle-point-form}. For the context of imaging inverse problems, we refer to \cite{Chambolle2016} for an overview.

The primal-dual optimization method (primal descent and dual ascent with overrelaxation in the primal variable) for \eqref{eq:map-problem-saddle-point-form} takes the form
\begin{equation}\label{eq:pd_optimization_update}
\left\{ \begin{array}{l}
    Y^{n+1} = \prox_{\sigma f^\ast}\left(Y^n+\sigma K X_\theta^{n}\right)\\
    X^{n+1} = \prox_{\tau g}(X^n - \tau K^T Y^{n+1}) \\
    X_\theta^{n+1} = X^{n+1} + \theta(X^{n+1}-X^n),
\end{array}\right.
\end{equation}
with step sizes $\tau,\sigma$, overrelaxation parameter $\theta$ and an initial guess $(X^0,Y^0)$. By convention, it is set $X_\theta^0 = X^0$. For convergence theory of the scheme \eqref{eq:pd_optimization_update} and possible accelerations we refer to \cite{Chambolle2011,Chambolle2016}.

In order to draw the parallels between computing the MAP of $\mu^\ast$ and drawing samples from $\mu^\ast$, note that ULA \eqref{eq:ula} can be viewed as a scheme alternating a gradient descent step in $h = - \log \mu^\ast$ and a diffusion step. The diffusion is scaled to ensure the samples have the correct covariance up to a bias which is due to the discretization error and is controlled by the step size $\tau$. Similar to ULA, several first order optimization algorithms have been transformed into a sampling algorithm by inserting a scaled diffusion step at some point in the iteration \cite{Lau2023}. The choice of optimization step is fairly flexible as long as the resulting method can be viewed as an alternating scheme combining a descent step (i.e., a discretized step along the gradient flow of a potential energy) with the diffusion (an exact step along the gradient flow of entropy) \cite{Wibisono2018}.

It appears natural to take a similar perspective on primal-dual optimization methods and add a diffusion step at some point in the iteration. Such a scheme, in the form of \eqref{eq:pd_sampling_update}, was proposed under the name unadjusted Langevin primal-dual algorithm (ULPDA) in \cite{Narnhofer2022}. In this variant of the algorithm, the diffusion step is added only in the primal variable. We will consider both \eqref{eq:pd_sampling_update} as well as the more general scheme
\begin{equation}\label{eq:general_pd_sampling_update}
\left\{ \begin{array}{l}
    \xi^{n+1} \sim \mathrm{N}(0,I_{d+m})\\
    Y^{n+1} = \prox_{\sigma f^\ast}\left(Y^n+\sigma K X_\theta^{n+1}\right) + B_Y\xi^{n+1}\\
    X^{n+1} = \prox_{\tau g}(X^n - \tau K^T Y^n) + B_X \xi^{n+1}\\
    X_\theta^{n+1} = X^{n+1} + \theta(X^{n+1}-X^n),
\end{array}\right.
\end{equation}
with general constant coefficients $B_X \in \bbR^{d \times (d+m)}$ and $B_Y \in \bbR^{m \times (d+m)}$
for the stochastic component. Note the increased dimension of the stochastic term compared to \eqref{eq:pd_sampling_update}, which is a special case of \eqref{eq:general_pd_sampling_update} with $B_X = (\sqrt{2}I_d, 0)$ and $B_Y = 0$.

As a further variant of the algorithm, we remark that in \eqref{eq:pd_sampling_update}, we add the stochastic term $\sqrt{2\tau}\xi^{n+1}$ after applying the proximal operator. Similar to other proximal Langevin algorithms, it may instead be added inside the argument of the proximal mapping, effectively making the primal update read
$$ X^{n+1} = \prox_{\tau g}(X^n - \tau K^T Y^{n+1} + \sqrt{2\tau} \xi^{n+1}). $$
Since the continuous time limit of both versions is the same SDE, our results on asymptotic behaviour in continuous time in \cref{sec:continuous-time} hold for either variant. In the discrete time analysis in \cref{sec:discrete-time}, we will focus on the version \eqref{eq:pd_sampling_update} with diffusion as the outer step. Similar results can be proven for the other variant and, where necessary, we will mention how our steps need to be modified to obtain them.

\subsection{Continuous Time Perspective}
In the analysis of ULA's convergence behaviour, it is helpful to view the update step as an Euler-Maruyama time discretization of the overdamped Langevin diffusion \eqref{eq:overdamped-langevin}. Convergence results for ULA are based on the fact that the target measure $\mu^\ast$ is the unique invariant distribution of the Markov process $X_t$.

In order to analyze ULPDA, we want to define a similar time-continuous limit of \eqref{eq:pd_sampling_update}. We assume that the step sizes $\tau, \sigma$ are coupled via the constant $\lim_{\tau, \sigma \to 0} \frac{\sigma}{\tau} = \lambda$ where $ 0 < \lambda < \infty$. Taking both step sizes to zero at this ratio, the formal limiting Markov process $(X_t,Y_t)$ satisfies the diffusion equation
\begin{equation}\label{eq:ct-pd-diffusion}
    \left\{ \begin{array}{l}
    \rmd X_t \in -(\partial g(X_t) + K^TY_t) \rmd t + \sqrt{2} \rmd W_t\\
    \rmd Y_t \in - \lambda (\partial f^*(Y_t) - K X_t) \rmd t,
    \end{array}\right.
\end{equation}
which we can write in a joint variable $Z_t = (X_t,Y_t)$ as
\begin{equation}\label{eq:ct-pd-diffusion-jointvar}
    \rmd Z_t \in - A_{\tup{PD}}(Z_t) \rmd t + B_{\tup{PD}} \rmd W_t.
\end{equation}
where we introduced the primal-dual coefficients
\begin{equation}\label{eq:coeff-ct-pd-diffusion}
    A_{\tup{PD}}(Z_t) \coloneqq \begin{pmatrix}
    \partial g(X_t) + K^TY_t\\
    - \lambda (KX_t + \partial f^\ast(Y_t))
\end{pmatrix},\qquad B_{\tup{PD}} \coloneqq \begin{pmatrix}
    \sqrt 2 I_d \\ 0
\end{pmatrix}.
\end{equation} 
Note in particular that the diffusion coefficient $\Sigma_{\tup{PD}} \coloneqq \frac12 B_{\tup{PD}}B_{\tup{PD}}^T$ is only positive semi-definite (or degenerate) with an $m$-dimensional null space, where $m$ is the dimension of the dual variable $Y$. Standard existence and uniqueness analysis for kinetic Fokker-Planck equations with positive definite diffusion coefficient does not hold anymore.

In the more general scheme \eqref{eq:general_pd_sampling_update}, the diffusion equation takes the form
\begin{equation}\label{eq:ct-general-pd-diffusion-jointvar}
    \rmd Z_t \in - A_{\tup{PD}}(Z_t) \rmd t + B_{\tup{gPD}} \rmd W_t, \qquad B_{\tup{gPD}} \coloneqq \begin{pmatrix}
    B_X \\ B_Y
\end{pmatrix}.
\end{equation}
If $B_{\tup{gPD}}$ (where `gPD' denotes `general primal-dual') is invertible, then $\Sigma_{\tup{gPD}} \coloneqq \frac12 B_{\tup{gPD}}B_{\tup{gPD}}^T$ becomes positive definite, and the corresponding Fokker-Planck equation has a solution \cite{Pavliotis2014}. We will check whether the stationary solution can be related to the density of the target distribution $\mu^\ast$.
\section{The Stationary Solution in Continuous Time}\label{sec:continuous-time}

We start by proving the existence of a unique stationary solution of the primal-dual diffusion SDE \eqref{eq:ct-pd-diffusion-jointvar}. We further show that the corresponding primal-dual Fokker-Planck equation is contractive in $\calP_2(\bbR^d)$, the space of distributions over $\bbR^d$ with finite second moment. This will imply that any solution converges in distribution to a stationary state. Convergence is shown in weighted 2-Wasserstein distance: Given any norm $\norm{\cdot}$ on $\bbR^d$, a transport distance on $\calP_2(\bbR^d)$ is defined by
\begin{equation}\label{eq:transport-distance}
    \calT(\mu, \tilde \mu) := \inf_{\calM \in \Gamma(\mu,\tilde \mu)} \left( \bbE_{(x,\tilde x)\sim \calM} \left[ \norm{x - \tilde x}^2 \right] \right)^{1/2},
\end{equation}
where $\Gamma(\mu,\tilde \mu)$ denotes the set of all couplings with marginals $\mu$ and $\tilde \mu$, i.e.
$$ \Gamma(\mu,\tilde \mu) = \left\{ \calM \in \calP(\bbR^n \times \bbR^n) ~:~ \mu = \int_{\bbR^n} \calM(\cdot,\tilde x)\,d\tilde x  ~\textrm{and}~ \tilde\mu = \int_{\calX} \calM(x, \cdot)\,dx \right\}. $$
$\calT$ defines a metric on $\calP_2(\bbR^d)$. If $\norm{\cdot}$ is the Euclidean norm, then $\calT$ is the 2-Wasserstein distance $\calW_2$.

\subsection{Fokker-Planck Equation and Existence of a Stationary Solution}\label{subsec:mean-field-limit}
The contraction result follows an argument from the literature on overdamped Langevin diffusion, see \cite{Fournier2021,Durmus2019}. Since this simpler case is instructive, we repeat it here and then generalize to the setting of an SDE with any strongly monotone operator as drift coefficient.

\subsubsection{Contraction of Overdamped Langevin Diffusion via a Coupling Argument}\label{subsubsec:contraction-overdamped-langevin-diffusion}
Suppose for now that the potential $h : \bbR^d \to \bbR$ is differentiable and $\omega_h$-strongly convex. We repeat here an instructive calculation that has been carried out, e.g., in \cite{Fournier2021,Durmus2019} to show stability of the overdamped Langevin diffusion process
\begin{equation}\label{eq:Langevin_diffusion_gradient}
    dX_t = - \nabla h(X_t)  dt + \sqrt{2} dW_t.
\end{equation}
The distribution $\mu_t$ of the process $X_t$ in \eqref{eq:Langevin_diffusion_gradient}, denoted $\mu_t = \Law(X_t)$, satisfies the Fokker-Planck equation (or forward Kolmogorov equation) \cite{Pavliotis2014} given by
$$ \frac{\partial \mu_t}{\partial t} = \nabla_x \cdot  ( \mu_t \nabla h(x)) + \Delta_x \mu_t. $$
The target distribution $\mu^\ast$ is the unique stationary solution of the Fokker-Planck equation. Furthermore, the PDE is contractive in Wasserstein distance, which can be shown using the following coupling argument. We will call two processes coupled if both solve \eqref{eq:Langevin_diffusion_gradient} with the same Brownian motion. Let $X_t, \tilde X_t$ be coupled with $X_0 \sim \mu_0$ and $\tilde X_0 \sim \tilde \mu_0$. Then the distribution $\calM_t$ of the joint variable $(X_t,\tilde X_t)$ satisfies the Fokker-Planck equation
\begin{equation}\label{eq:joint_FP_gradient}
    \frac{\partial \calM_t}{\partial t} = \nabla_x \cdot ( \calM_t \nabla h (x)) + \nabla_{\tilde x}\cdot (\calM_t \nabla h (\tilde x)) + (\nabla_x + \nabla_{\tilde x}) \cdot (\nabla_x + \nabla_{\tilde x})\calM_t.
\end{equation}
Note that we can rewrite this using $(\nabla_x + \nabla_{\tilde x}) \cdot (\nabla_x + \nabla_{\tilde x}) = \Delta_{x+\tilde x}$ by a coordinate change $(x,\tilde x) \mapsto (x+\tilde x, x - \tilde x)$. For any distance function of the form $d(x,\tilde x) = \rho(\norm{x-\tilde x})$, we directly see $\Delta_{x+\tilde x} d(x,\tilde x) = 0$. Using this and the property \eqref{eq:joint_FP_gradient} of the joint distribution $\calM_t$, a contraction in Wasserstein distance can hence be shown in the following way. We start by estimating
\begin{align*}
    \frac{1}2 \frac{d}{dt} \bbE_{(x,\tilde x) \sim \calM_t} \left[ \norm{x-\tilde x}^2 \right] &= \frac{1}2 \frac{d}{dt} \int \norm{x-\tilde x}^2 \calM_t(\mathrm d x,\mathrm d \tilde x) \\
    &= \frac{1}2 \int \norm{x-\tilde x}^2 \left[\nabla_x \cdot ( \calM_t \nabla h (x)) + \nabla_{\tilde x}\cdot (\calM_t \nabla h (\tilde x)) + \Delta_{x+\tilde x} \calM_t \right] \rmd x \rmd \tilde x \\
    &= - \int (x-\tilde x) \cdot (\nabla h(x) - \nabla h(\tilde x)) \calM_t(\mathrm d x,\mathrm d \tilde x) \\
    &\le - \omega_h \bbE_{(x,\tilde x) \sim \calM_t} \left[ \norm{x-\tilde x}^2 \right],
\end{align*}
where we used that due to strong convexity of $h$, we have $ (x-\tilde x) \cdot (\nabla h(x) - \nabla h(\tilde x)) \ge \omega_h \norm{x-\tilde x}^2$. Applying now Gronwall's lemma, we obtain
$$ \bbE_{(x,\tilde x) \sim \calM_t} \left[ \norm{x-\tilde x}^2 \right] \le e^{-2\omega_h t} \bbE_{(x,\tilde x) \sim \calM_0} \left[ \norm{x-\tilde x}^2 \right]. $$
The left side is bounded from below by $\calW_2^2(\mu_t,\tilde \mu_t)$ by definition. By taking the infimum over all couplings $\calM_0$ (with marginals $\mu_0$ and $\tilde \mu_0$) on the right hand side, we obtain
$$ \calW_2(\mu_t,\tilde \mu_t) \leq e^{- \omega_h t} \calW_{2}(\mu_0,\tilde \mu_0). $$
In particular, if we choose one of the processes to be initialized at the target distribution as $\tilde \mu_0 = \mu_\ast$, then the distribution of $\tilde X_t$ is stationary at $\tilde \mu_t = \mu^\ast$ and by the above we obtain the convergence result
$$ \calW_2(\mu_t,\mu^\ast) \leq e^{- \omega_h t} \calW_{2}(\mu_0,\mu^\ast). $$

\subsubsection{Contraction and Stationary Solution of the Primal-Dual Diffusion Equation}\label{subsubsec:contraction-general-diffusion}
We can generalize the above argument allowing an analysis of the time-continuous limit \eqref{eq:ct-pd-diffusion-jointvar} of the primal-dual sampling scheme \eqref{eq:pd_sampling_update}. Although the primal-dual dynamics are not the gradient flow of a corresponding relative entropy anymore, contraction can equivalently be shown for a coupling of two processes. For that, let $Z_t$ satisfy a general stochastic differential inclusion with non-homogeneous coefficients
\begin{equation}\label{eq:SDI_general_strong_monotone_op}
    \mathrm{d}Z_t \in - A(Z_t)\rmd t + B(Z_t) \rmd W_t.
\end{equation}
Here $W_t$ is Brownian motion in $\bbR^K$ and $B : \bbR^d \to \bbR^{d\times K}$. We assume the drift $A$ to be a set-valued monotone operator, meaning
$$ \inpro{w-\tilde w}{z - \tilde z} \ge 0, \qquad \forall w \in A(z), \tilde w  \in A(\tilde z). $$
We say that $A$ is $\omega$-strongly monotone if the following stronger condition is satisfied:
$$ \inpro{w-\tilde w}{z - \tilde z} \ge \omega \norm{z - \tilde z}^2, \qquad \forall w \in A(z), \tilde w  \in A(\tilde z). $$

\begin{proposition}\label{prop:general_FP}
    Let $Z_t$ be a solution to \eqref{eq:SDI_general_strong_monotone_op} and denote by $\pi_t$ the density w.r.t. the Lebesgue measure of the distribution of $Z_t$. Then $\pi_t$ satisfies the Fokker-Planck equation
    \begin{equation}\label{eq:FP_general_strong_monotone_op}
        \frac{\partial \pi_t}{\partial t} = \nabla_z \cdot \left( w \pi_t + \Sigma(z) \nabla_z \pi_t\right),\qquad w\in A(z).
    \end{equation}
    with $\Sigma(z) = \frac{1}{2} B(z) B(z)^T$.
\end{proposition}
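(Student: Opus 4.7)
The plan is to reduce the statement to the classical It\^o-formula derivation of the Fokker--Planck equation, handling the set-valued drift via a measurable selection. First I would fix a measurable $w : \bbR^{d+m} \to \bbR^{d+m}$ with $w(z) \in A(z)$ so that \eqref{eq:SDI_general_strong_monotone_op} reads along the trajectory as the classical It\^o SDE $\rmd Z_t = -w(Z_t)\rmd t + B(Z_t)\rmd W_t$.

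Next, for an arbitrary $\phi \in C_c^\infty(\bbR^{d+m})$ I would apply It\^o's formula to $\phi(Z_t)$ and take expectation. The Brownian integral vanishes (a true martingale after standard localization, using the compact support of $\phi$), which leaves
\[
    \frac{d}{dt}\bbE[\phi(Z_t)] = \bbE\!\left[-\nabla\phi(Z_t)^T w(Z_t) + \tr\bigl(\Sigma(Z_t)\nabla^2 \phi(Z_t)\bigr)\right].
\]
Writing both sides as integrals against $\pi_t$ and integrating by parts transfers the derivatives from $\phi$ onto the density and $\Sigma$. Since $\phi$ is arbitrary, this yields \eqref{eq:FP_general_strong_monotone_op} in the distributional sense. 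In the primal-dual application of interest $B$ is constant in $z$, so $\Sigma$ carries no spatial derivatives and the divergence form printed in the proposition holds verbatim; for genuinely $z$-dependent $\Sigma$ one picks up an additional drift $\nabla\cdot\Sigma$, which is not relevant for the applications later.

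The main technical subtleties sit at the endpoints of this chain rather than in the It\^o computation itself. Firstly, one needs $A$ to admit a measurable selection with enough integrability for the SDE to be well posed; for monotone $A$ with reasonable growth, the minimal-norm section does the job. Secondly, the strong divergence form of \eqref{eq:FP_general_strong_monotone_op} presumes $\pi_t$ has weak spatial derivatives, which is not automatic when $\Sigma$ is degenerate -- as is the case for $\Sigma_{\tup{PD}}$. Without further assumptions the identity should be read distributionally; standard parabolic regularity arguments would promote it to the pointwise form whenever $\Sigma$ is non-degenerate.
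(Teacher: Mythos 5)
Your argument is correct and is exactly the standard textbook derivation (It\^o's formula applied to $\phi\in C_c^\infty$, take expectations, integrate by parts); the paper does not supply its own proof of this proposition but treats it as a classical result, citing \cite{Pavliotis2014}, so there is no alternative route to compare against. Your remark that the printed divergence form $\nabla_z\cdot(\Sigma(z)\nabla_z\pi_t)$ only agrees with the Kolmogorov form $\sum_{i,j}\partial_i\partial_j(\Sigma_{ij}\pi_t)$ produced by It\^o's lemma when $\Sigma$ is $z$-independent is a correct and worthwhile caveat: the additional drift $\nabla_z\cdot\big((\nabla_z\cdot\Sigma)\,\pi_t\big)$ vanishes for the constant coefficients $\Sigma_{\tup{PD}}$, $\Sigma_{\tup{gPD}}$ used throughout, and the one place the paper does use a $z$-dependent diffusion coefficient (the modified equation in \cref{subsec:bias-corrected-diffusion}) is handled via the weak formulation \eqref{eq:modified-weak-fp} rather than through \cref{prop:general_FP}. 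Your note that the strong form presumes weak differentiability of $\pi_t$ --- not automatic for degenerate $\Sigma$, in which case the equation should be read distributionally --- is likewise the right caveat.
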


Since we are repeatedly going to make use of coupling arguments, note that by applying \cref{prop:general_FP} to a coupled diffusion
\begin{equation}\label{eq:SDI_coupling_general_strong_monotone_op}
    \begin{pmatrix} \mathrm{d} Z_t\\ \mathrm{d} \tilde Z_t \end{pmatrix} \in - \begin{pmatrix} A(Z_t) \\ A(\tilde Z_t) \end{pmatrix} \mathrm d t + \begin{pmatrix} B(Z_t) \\ B(\tilde Z_t) \end{pmatrix} \mathrm{d} W_t,
\end{equation}
we can also obtain a Fokker-Planck equation for the joint density of the coupled processes.
\begin{proposition}\label{prop:general_FP_joint}
    Let $Z_t, \tilde Z_t$ be two coupled processes that both solve \eqref{eq:SDI_general_strong_monotone_op} with the same Brownian motion, i.e. $(Z_t,\tilde Z_t)$ solves \eqref{eq:SDI_coupling_general_strong_monotone_op}. Then the joint distribution $\Pi_t(z,\tilde z)$ of $(Z_t,\tilde Z_t)$ satisfies the Fokker-Planck equation
    \begin{equation}\label{eq:joint_FP_general_strong_monotone_op}
        \frac{\partial \Pi_t}{\partial t} = \nabla_z \cdot \left( w \pi_t \right) + \nabla_{\tilde z} \cdot \left( \tilde w \Pi_t \right) + (\nabla_z+\nabla_{\tilde z}) \cdot (\Sigma(z,\tilde z) (\nabla_z+\nabla_{\tilde z}) \Pi_t),\qquad w\in A(z),\tilde w \in A(\tilde z).
    \end{equation}
\end{proposition}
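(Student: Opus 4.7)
The plan is to view the coupled system \eqref{eq:SDI_coupling_general_strong_monotone_op} as itself an instance of the general SDI \eqref{eq:SDI_general_strong_monotone_op} on the enlarged state space $\bbR^{d}\times\bbR^{d}$, and then invoke \cref{prop:general_FP} on this joint process. Concretely, set $\hat Z_t = (Z_t, \tilde Z_t)$, $\hat A(\hat z) = A(z) \times A(\tilde z)$, and
$$\hat B(\hat z) = \begin{pmatrix} B(z) \\ B(\tilde z) \end{pmatrix} \in \bbR^{2d\times K}.$$
Crucially, the same $K$-dimensional Brownian motion $W_t$ drives both copies, so $\hat B$ is a single vertically stacked matrix rather than a block-diagonal one. \cref{prop:general_FP} then yields directly a Fokker-Planck equation for $\Pi_t$ on $\bbR^{2d}$, and the task reduces to rewriting the two resulting terms (drift and diffusion) in the block-separated form displayed in \eqref{eq:joint_FP_general_strong_monotone_op}.

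For the drift, I would observe that for any selection $\hat w = (w, \tilde w) \in \hat A(\hat z)$, the joint divergence splits trivially:
$$\nabla_{\hat z} \cdot (\hat w \Pi_t) = \nabla_z \cdot (w\Pi_t) + \nabla_{\tilde z} \cdot (\tilde w \Pi_t),$$
which reproduces the first two terms of \eqref{eq:joint_FP_general_strong_monotone_op}. The main work lies in the diffusion term. Computing $\hat \Sigma(\hat z) = \tfrac12 \hat B \hat B^T$ directly from the stacked form gives the $2\times 2$ block matrix
$$\hat\Sigma(\hat z) = \tfrac12 \begin{pmatrix} B(z)B(z)^T & B(z)B(\tilde z)^T \\ B(\tilde z)B(z)^T & B(\tilde z)B(\tilde z)^T \end{pmatrix},$$
i.e., all four blocks arise from the same two factors. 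This is precisely the algebraic fingerprint of using a common $W_t$; had the two processes been driven by independent Brownian motions, the off-diagonal blocks would vanish.

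The core algebraic step is then to recognise that $\nabla_{\hat z} \cdot (\hat \Sigma \nabla_{\hat z} \Pi_t)$, once expanded block by block, equals $(\nabla_z + \nabla_{\tilde z}) \cdot [\Sigma(z,\tilde z)(\nabla_z + \nabla_{\tilde z}) \Pi_t]$ with $\Sigma(z,\tilde z)$ encoding the cross-block $\tfrac12 B(z) B(\tilde z)^T$ together with the symmetric diagonal blocks. The cleanest way to see this is to introduce the first-order operator $\calA := B(z)^T \nabla_z + B(\tilde z)^T \nabla_{\tilde z}$, note that $\tfrac12 \calA^\ast \calA$ applied to $\Pi_t$ produces exactly the four block contributions above, and then read off that this coincides with the claimed factorisation. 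For the constant-$B$ case relevant to the primal-dual SDE \eqref{eq:ct-pd-diffusion-jointvar}, where $B(z)=B(\tilde z)=B_{\tup{PD}}$, every block equals $\tfrac12 B_{\tup{PD}}B_{\tup{PD}}^T = \Sigma_{\tup{PD}}$ and the factorisation is immediate via the identity $\nabla_z \otimes \nabla_z + \nabla_z \otimes \nabla_{\tilde z} + \nabla_{\tilde z}\otimes \nabla_z + \nabla_{\tilde z}\otimes \nabla_{\tilde z} = (\nabla_z + \nabla_{\tilde z})\otimes(\nabla_z + \nabla_{\tilde z})$.

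The main obstacle is really bookkeeping rather than analysis: one has to be careful in identifying how the stacked noise couples the two gradient operators, and in giving a precise meaning to the two-argument symbol $\Sigma(z,\tilde z)$ in the variable-$B$ setting. In the application that drives the paper, $B$ is constant, so the algebraic identity above closes the argument immediately, and the Fokker-Planck equation from \cref{prop:general_FP} reduces to \eqref{eq:joint_FP_general_strong_monotone_op} as stated.
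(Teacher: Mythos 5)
Your approach is precisely the paper's (the paper in fact provides no separate proof beyond the sentence preceding the proposition, which states exactly your idea: apply \cref{prop:general_FP} to the enlarged system \eqref{eq:SDI_coupling_general_strong_monotone_op} with the stacked diffusion matrix $\hat B$). Your block computation of $\hat\Sigma = \frac12\hat B\hat B^T$ and the drift split are both correct, and the observation that a common $W_t$ produces non-zero off-diagonal blocks $\frac12 B(z)B(\tilde z)^T$ is exactly the point.

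Your honest caveat about the variable-$B$ case is well taken, and worth being blunter about: for non-constant $B$ the four block terms
$\frac12\nabla_z\cdot(B(z)B(z)^T\nabla_z\Pi)$, $\frac12\nabla_z\cdot(B(z)B(\tilde z)^T\nabla_{\tilde z}\Pi)$, $\frac12\nabla_{\tilde z}\cdot(B(\tilde z)B(z)^T\nabla_z\Pi)$, $\frac12\nabla_{\tilde z}\cdot(B(\tilde z)B(\tilde z)^T\nabla_{\tilde z}\Pi)$
cannot all be written as $(\nabla_z+\nabla_{\tilde z})\cdot(\Sigma(z,\tilde z)(\nabla_z+\nabla_{\tilde z})\Pi)$ with a single two-argument matrix field $\Sigma$, because matching the expansion term by term would require $\Sigma$ to equal $\frac12 B(z)B(z)^T$, $\frac12 B(z)B(\tilde z)^T$, and $\frac12 B(\tilde z)B(\tilde z)^T$ simultaneously; these coincide only when $B$ is constant. (A related imprecision already lurks in \cref{prop:general_FP}: the form $\nabla\cdot(\Sigma\nabla\pi)$ differs from the true Itô Fokker-Planck diffusion $\frac12\sum_{ij}\partial_i\partial_j(\Sigma_{ij}\pi)$ by a lower-order term $\nabla\cdot(\pi\,\nabla\cdot\Sigma)$ unless $\Sigma$ is constant.) None of this affects the paper's applications, since $B_{\tup{PD}}$ (and $B_{\tup{gPD}}$) are constant, and even in \cref{lem:stability_general_dynamics_cont_time} the only property used is that the diffusion term is annihilated by integration against any function of $z-\tilde z$, i.e. that it is a total $(\nabla_z+\nabla_{\tilde z})$-divergence. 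So your proof closes the argument in exactly the regime the paper uses it, and your flag of the general case is a legitimate refinement of the statement.
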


For showing the contractivity of overdamped Langevin dynamics in \cref{subsubsec:contraction-overdamped-langevin-diffusion}, the relevant requirement was the strong convexity of $h$, implying that $\nabla h$ is a strongly monotone operator. The computation can be directly generalized under a strong monotonicity assumption on $A$.
\begin{lemma}\label{lem:stability_general_dynamics_cont_time}
     Suppose $A$ is $\omega$-strongly monotone. Then \eqref{eq:SDI_general_strong_monotone_op} is contractive in the sense that, if $Z_t, \tilde Z_t$ are two coupled processes that solve \eqref{eq:SDI_general_strong_monotone_op} with the same Brownian motion, then for $t \ge 0$ their associated distributions $\pi_t, \tilde \pi_t$ satisfy
    \begin{equation*}
        \calW_2(\pi_t,\tilde \pi_t) \le e^{- \omega t} \calW_2(\pi_0,\tilde \pi_0).
    \end{equation*}
\end{lemma}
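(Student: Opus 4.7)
The plan is to repeat the coupling calculation that was carried out for overdamped Langevin dynamics in \cref{subsubsec:contraction-overdamped-langevin-diffusion}, but in the abstract setting and using \cref{prop:general_FP_joint} in place of the explicit gradient form \eqref{eq:joint_FP_gradient}. Concretely, let $Z_t, \tilde Z_t$ be the two coupled processes with laws $\pi_t, \tilde\pi_t$ and joint law $\Pi_t$. The strategy is to track the quantity $D(t) \coloneqq \bbE_{(z,\tilde z)\sim \Pi_t}[\norm{z-\tilde z}^2]$ and to show that it decays as $D(t) \le e^{-2\omega t} D(0)$, from which the claim follows by taking the infimum over all initial couplings of $\pi_0$ and $\tilde\pi_0$ (the infimum of $D(0)$ over such couplings being $\calW_2^2(\pi_0,\tilde\pi_0)$, and $D(t)$ being an upper bound for $\calW_2^2(\pi_t,\tilde\pi_t)$ for any coupling).

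To carry out the differentiation, I would use the joint Fokker--Planck equation \eqref{eq:joint_FP_general_strong_monotone_op} and compute
\begin{equation*}
    \frac12 \frac{\rmd}{\rmd t} D(t) = \frac12 \int \norm{z-\tilde z}^2 \bigl[ \nabla_z \cdot (w\Pi_t) + \nabla_{\tilde z}\cdot (\tilde w \Pi_t) + (\nabla_z+\nabla_{\tilde z})\cdot (\Sigma(z,\tilde z)(\nabla_z+\nabla_{\tilde z})\Pi_t) \bigr] \rmd z\rmd \tilde z.
\end{equation*}
Two observations drive the argument. First, the diffusion contribution vanishes: the combined operator $\nabla_z+\nabla_{\tilde z}$ is the gradient in the $z+\tilde z$ direction only, while $\norm{z-\tilde z}^2$ depends only on $z-\tilde z$, so integrating by parts twice and using $(\nabla_z+\nabla_{\tilde z})\norm{z-\tilde z}^2=0$ eliminates the term (this uses that the coupling is via the \emph{same} Brownian motion, which is exactly the structure encoded in \cref{prop:general_FP_joint}). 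Second, for the drift term, integration by parts yields
\begin{equation*}
    - \int (z-\tilde z)\cdot(w-\tilde w)\,\Pi_t(\rmd z,\rmd \tilde z),
\end{equation*}
where $w\in A(z)$ and $\tilde w\in A(\tilde z)$ are the selections provided by \cref{prop:general_FP_joint}. By $\omega$-strong monotonicity of $A$, the integrand is pointwise bounded above by $-\omega\norm{z-\tilde z}^2$, so $\tfrac12 D'(t) \le -\omega D(t)$.

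Gronwall's lemma then gives $D(t)\le e^{-2\omega t}D(0)$, and taking the square root together with the standard infimum argument on initial couplings yields the desired Wasserstein contraction. The step I expect to require the most care is the vanishing of the diffusion term in the presence of a state-dependent diffusion coefficient $B(z)$: for the identity $(\nabla_z+\nabla_{\tilde z})\norm{z-\tilde z}^2 = 0$ to suffice, one either works under the assumption that $B$ is constant (which is the case for our primal-dual SDE \eqref{eq:ct-pd-diffusion-jointvar} with $B = B_{\tup{PD}}$, so the lemma applies as stated in our applications) or, more generally, relies on the synchronous coupling structure so that the cross-diffusion block $B(z)B(\tilde z)^\top$ enters symmetrically and the trace against the Hessian of $\norm{z-\tilde z}^2$ reduces to $\tr((B(z)-B(\tilde z))(B(z)-B(\tilde z))^\top)$, which a Lipschitz assumption on $B$ can absorb into a modified monotonicity constant. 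Handling the set-valued selection $w\in A(z), \tilde w\in A(\tilde z)$ in a measurable way is a technical but standard point that I would address by invoking a measurable selection theorem.
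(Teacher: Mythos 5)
Your proposal is correct and follows essentially the same route as the paper: you track $D(t)=\bbE_{\Pi_t}[\norm{z-\tilde z}^2]$, invoke the joint Fokker--Planck equation from \cref{prop:general_FP_joint}, kill the diffusion contribution via $(\nabla_z+\nabla_{\tilde z})\norm{z-\tilde z}^2=0$, apply strong monotonicity to the drift term after integrating by parts, and finish with Gronwall plus the infimum over initial couplings. Your closing remark about state-dependent $B$ is a worthwhile observation the paper leaves implicit: as stated, the diffusion block in \eqref{eq:joint_FP_general_strong_monotone_op} factors through $(\nabla_z+\nabla_{\tilde z})$ only when $B$ is constant (so that all four cross-blocks $B(z)B(\tilde z)^T$ coincide), which is indeed the regime where the lemma is applied ($B=B_{\tup{PD}}$ or $B_{\tup{gPD}}$), and in the non-constant case one would need an extra Lipschitz-on-$B$ assumption exactly as you indicate.
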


\begin{proof}
With a change of variable we can write $\nabla_z + \nabla_{\tilde z} = \nabla_{z+\tilde z}$. By employing \eqref{eq:joint_FP_general_strong_monotone_op}, we obtain the same type of inequality as in the case of overdamped Langevin diffusion that
    \begin{align*}
        \frac{1}2 \frac{\rmd}{\rmd t}
        \bbE_{(z,\tilde z) \sim \Pi_t} \left[ \norm{z-\tilde z}^2 \right] &= \frac{1}{2} \frac{\rmd }{\rmd t} \int \norm{z - \tilde z}^2 \Pi_t(\mathrm d z,\mathrm d \tilde z) \\
        &= \frac{1}{2} \int \norm{z - \tilde z}^2 \left( \nabla_z \cdot \left( w \Pi_t \right) + \nabla_{\tilde z} \cdot \left( \tilde w \Pi_t \right) + (\nabla_{z+\tilde z}) \cdot (\Sigma (\nabla_{z+\tilde z}) \Pi_t) \right) \rmd z \rmd \tilde z\\
        &= - \int \left( (z-\tilde z) \cdot w + (\tilde z - z)\cdot \tilde w \right) \Pi_t(\mathrm d z,\mathrm d \tilde z)\\
        &\le - \omega \bbE_{(z,\tilde z) \sim \Pi_t} \left[ \norm{z-\tilde z}^2 \right]
    \end{align*}
    In the second line we used \cref{prop:general_FP_joint} and in the third line partial integration together with the fact that $(\nabla_{z}+\nabla_{\tilde z})  (\norm{z-\tilde z}^2) = 0$. We now apply Gronwall's Lemma and obtain
    \begin{equation*}
        \bbE_{(z,\tilde z) \sim \Pi_t} \left[ \norm{z-\tilde z}^2 \right] \le e^{- 2\omega t} \bbE_{(z,\tilde z) \sim \Pi_0} \left[ \norm{z-\tilde z}^2 \right]
    \end{equation*}
    which implies
    \begin{equation*}
        \calW_2(\pi_t,\tilde \pi_t) \le e^{-\omega t} \bbE_{(z,\tilde z) \sim \Pi_0} \left[ \norm{z-\tilde z}^2 \right].
    \end{equation*}
    Taking the infimum over all couplings of $\pi_0$ and $\tilde \pi _0$ on the right hand side concludes the proof.
\end{proof}

Consider now the general primal-dual type diffusion \eqref{eq:ct-general-pd-diffusion-jointvar} and denote $\pi_t$ for the law of $Z_t$. By \cref{prop:general_FP}, $\pi_t$ solves the Fokker-Planck equation
\begin{equation}\label{eq:fp-pd}
\frac{\partial \pi_t}{\partial t} = \nabla_z \cdot \left( w \pi_t + \Sigma_{\tup{gPD}} \nabla_z \pi_t\right),\qquad w\in A_{\tup{PD}}(z).
\end{equation}
with $\Sigma_{\tup{gPD}} = \frac{1}{2} B_{\tup{gPD}} B_{\tup{gPD}}^T $ as in \eqref{eq:ct-general-pd-diffusion-jointvar} and $A_{\tup{PD}}$ given in \eqref{eq:coeff-ct-pd-diffusion}. The drift coefficient $A_{\tup{PD}}$ is a monotone operator.

Under a strong convexity assumption on $g$ and $f^\ast$, the drift is even strongly monotone in a weighted norm, so that we can apply \cref{lem:stability_general_dynamics_cont_time} to the primal-dual diffusion equation. In the following, for some $a,b>0$ we equip the space $\calZ = \calX \times \calY$ of joint primal and dual variable with the weighted inner product $\inpro{z_1}{z_2}_{a,b} \coloneqq \inpro{x_1}{x_2}_a + \inpro{y_1}{y_2}_b \coloneqq a\inpro{x_1}{x_2} + b \inpro{y_1}{y_2}$, where $\inpro{\cdot}{\cdot}$ here denotes the Euclidean inner product on $\calX$ and $\calY$. The induced norms will be denoted $\norm{\cdot}_a$ (on $\calX$ and $\calY$) and $\norm{\cdot}_{a,b}$ (on $\calZ$).\\
The transport distance \eqref{eq:transport-distance} on $\calP_2(\calZ)$ induced by $\norm{\cdot}_{a,b}$ will be denoted by $\calT_{a,b}$. Note that the equivalence of norms on $\bbR^n$ implies equivalence of all $\calT_{a,b}$ for $a,b>0$.

\begin{theorem}\label{thm:convergence-pd-ct}
    Assume that $g,f^\ast$ are $\omega_g$-strongly convex and $\omega_{f^\ast}$-strongly convex, respectively, with $\omega_g, \omega_{f^\ast} > 0$. Let $Z_t, \tilde Z_t$ be two coupled processes that solve the general primal-dual diffusion equation \eqref{eq:ct-general-pd-diffusion-jointvar} with corresponding densities $\pi_t$ and $\tilde \pi_t$. Then it holds 
    $$ \calT_{1,\lambda^{-1}}(\pi_t, \tilde \pi_t) \le e^{-\omega t} \calT_{1,\lambda^{-1}} (\pi_0, \tilde \pi_0), $$
    where $\omega = \min\{\omega_g, \lambda\omega_{f^\ast}\} > 0$. In particular, the dynamics are contractive and there exists a unique stationary distribution of the continuous time primal-dual diffusion. Making the dependence on $\lambda$ explicit, we denote this stationary distribution by $\pi_\lambda$.
\end{theorem}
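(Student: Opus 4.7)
The plan is to verify that $A_{\tup{PD}}$ is $\omega$-strongly monotone in the weighted inner product $\inpro{\cdot}{\cdot}_{1,\lambda^{-1}}$, then invoke (a mild adaptation of) \cref{lem:stability_general_dynamics_cont_time}, and finally deduce existence of a stationary distribution by a Banach fixed-point argument.

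For strong monotonicity, I would take $w \in A_{\tup{PD}}(z)$ and $\tilde w \in A_{\tup{PD}}(\tilde z)$ with subgradient selections $p \in \partial g(x)$, $\tilde p \in \partial g(\tilde x)$, $q \in \partial f^\ast(y)$, $\tilde q \in \partial f^\ast(\tilde y)$, and expand $\inpro{w - \tilde w}{z - \tilde z}_{1,\lambda^{-1}}$. The weight $\lambda^{-1}$ on the dual component cancels the prefactor $\lambda$ in the dual drift, and the cross terms annihilate by adjointness $\inpro{K^T(y - \tilde y)}{x - \tilde x} = \inpro{K(x - \tilde x)}{y - \tilde y}$. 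What remains is
\begin{equation*}
\inpro{p - \tilde p}{x - \tilde x} + \inpro{q - \tilde q}{y - \tilde y} \ge \omega_g \norm{x - \tilde x}^2 + \omega_{f^\ast} \norm{y - \tilde y}^2 \ge \omega \norm{z - \tilde z}_{1,\lambda^{-1}}^2,
\end{equation*}
where the first inequality uses strong convexity of $g, f^\ast$ and the second holds for $\omega = \min\{\omega_g, \lambda\omega_{f^\ast}\} > 0$.

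Next, I would re-run the coupling argument of \cref{lem:stability_general_dynamics_cont_time} with the weighted inner product in place of the Euclidean one. That proof uses the Euclidean structure only through the identity $(\nabla_z + \nabla_{\tilde z})\norm{z - \tilde z}^2 = 0$, which ensures the diffusion term vanishes after integration by parts. The weighted norm $\norm{z - \tilde z}_{1,\lambda^{-1}}^2 = \norm{x - \tilde x}^2 + \lambda^{-1}\norm{y - \tilde y}^2$ satisfies the same identity componentwise (each partial $\partial_{z_i} + \partial_{\tilde z_i}$ annihilates a weighted squared difference), so the computation carries through to yield
\begin{equation*}
\frac{\rmd}{\rmd t}\bbE_{\Pi_t}\!\left[\norm{z - \tilde z}_{1,\lambda^{-1}}^2\right] \le -2\omega\,\bbE_{\Pi_t}\!\left[\norm{z - \tilde z}_{1,\lambda^{-1}}^2\right].
\end{equation*}
Grönwall and taking the infimum over initial couplings $\Pi_0$ gives the stated $\calT_{1,\lambda^{-1}}$-contraction. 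Equivalently, a change of variable $\hat y = y/\sqrt{\lambda}$ turns $\inpro{\cdot}{\cdot}_{1,\lambda^{-1}}$ into the Euclidean product while preserving $\omega$-strong monotonicity of the transformed drift, allowing \cref{lem:stability_general_dynamics_cont_time} to be invoked as a black box.

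Existence and uniqueness of $\pi_\lambda$ then follow by applying Banach's fixed-point theorem to the Markov semigroup $S_t$ associated with \eqref{eq:ct-general-pd-diffusion-jointvar} on the complete metric space $(\calP_2(\calZ), \calT_{1,\lambda^{-1}})$: for any fixed $t > 0$ the map $S_t$ is a strict contraction with factor $e^{-\omega t} < 1$, hence admits a unique fixed point. The main technical point requiring care is that for the specific scheme \eqref{eq:ct-pd-diffusion-jointvar} the diffusion matrix $B_{\tup{PD}}$ is degenerate, so the associated Fokker-Planck equation is only hypoelliptic and the formal integration by parts in the coupling step needs justification. This can be handled either by a regularization/limit argument, or more cleanly by performing the computation directly at the SDE level via Itô's formula applied to $\norm{Z_t - \tilde Z_t}_{1,\lambda^{-1}}^2$, which yields the same differential inequality without invoking densities.
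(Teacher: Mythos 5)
Your proof takes the same route as the paper: verify $\omega$-strong monotonicity of $A_{\tup{PD}}$ in the weighted inner product $\inpro{\cdot}{\cdot}_{1,\lambda^{-1}}$ and then invoke the coupling contraction Lemma~\ref{lem:stability_general_dynamics_cont_time}. You are in fact more careful than the paper at two points the paper leaves implicit — that Lemma~\ref{lem:stability_general_dynamics_cont_time} is stated in the Euclidean $\calW_2$ but must be re-run with (or reduced by the substitution $\hat y = y/\sqrt\lambda$ to) the weighted metric, and that the degenerate diffusion requires either a hypoellipticity/regularization argument or a direct Itô computation to justify the integration by parts — so your write-up is both correct and a slight tightening of the original.
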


\begin{proof}
    We employ \cref{lem:stability_general_dynamics_cont_time}, hence the only part we need to show is the strong monotonicity of the negative drift coefficient $A_{\tup{PD}}$ defined by
    $$ A_{\tup{PD}} = \begin{pmatrix}
        \partial g & K^T\\
        -\lambda K & \lambda \partial f^\ast
    \end{pmatrix}. $$
    We write $z = (x,y) \in \calX \times \calX = \calZ$ for the joint primal and dual variable and equip $\calZ$ with the weighted inner product $\inpro{\cdot}{\cdot}_{1,\lambda^{-1}}$. Let now $w \in A_{\tup{PD}}(z)$ and $\tilde w \in A_{\tup{PD}}(\tilde z)$. Then there are $u \in \partial g (x)$ and $v \in \partial f^\ast (y)$ such that $w = (u+K^T y, \lambda( v - Kx))$. Equivalently, if $\tilde w \in A(\tilde z)$ then there exist $\tilde u \in \partial g(\tilde x)$ and $\tilde v \in \partial f ^\ast (\tilde y)$ with $\tilde w = (\tilde u + K^T \tilde y, \lambda (\tilde v - K\tilde x))$. It follows that
    \begin{align*}
        \inpro{w-\tilde w}{z-\tilde z}_{1,\lambda^{-1}} &= \inpro{u - \tilde u}{x-\tilde x} + \lambda \inpro{v - \tilde v}{y-\tilde y}_{\lambda^{-1}} \\
        &\qquad + \inpro{K^T(y-\tilde y)}{x-\tilde x} - \lambda \inpro{K(x-\tilde x)}{y-\tilde y}_{\lambda^{-1}}\\
        &= \inpro{u - \tilde u}{x-\tilde x} + \inpro{v - \tilde v}{y-\tilde y}\\
        &\ge \omega_g \norm{x-\tilde x}^2 + \omega_{f^\ast} \norm{y-\tilde y}^2 \\
        &\ge \omega \norm{z-\tilde z}_{1,\lambda^{-1}}^2.
    \end{align*}
\end{proof}
The stationary solution of \eqref{eq:fp-pd} will be denoted by $\pi_\lambda$. We will also use the $x$-marginal $\mu_\lambda$ of $\pi_\lambda$, defined as $\mu_\lambda(x) \coloneqq \int_{\calY} \pi_\lambda(x,y) \rmd y.$

\subsection{Non-Concentration of the Stationary Solution}\label{subsec:non-concentration}
Having established the existence of $\pi_\lambda$, we can try to relate the algorithmic, time discretized scheme to it. Start by considering the special case \eqref{eq:ct-pd-diffusion-jointvar} where diffusion is only added in the primal variable. While overdamped Langevin dynamics is designed to have the target $\mu^\ast$ as its stationary solution, this is not the case in the primal-dual setting. In general, $\mu^\ast \neq \mu_\lambda$, in other words $\mu^\ast$ is not the $x$-marginal of $\pi_\lambda$. We demonstrate this on a simple example of a Gaussian target, where we can solve the Fokker-Planck equation \eqref{eq:fp-pd} in closed form.

\begin{theorem}\label{thm:1d-gauss-example}
    Consider a one-dimensional setting with the target $\mu^\ast(x) \propto \exp(-h(x))$ given by
    \begin{equation*}\label{eq:1d-gauss-example}
        f(y) \coloneqq \frac{1}{2c_f}y^2,\ g(x) \coloneqq \frac{1}{2c_g}x^2,\ h(x) \coloneqq f(kx) + g(x) = \frac{k^2c_g + c_f}{2c_fc_g} x^2, \qquad x,y\in\bbR,
    \end{equation*}
    where $k \neq 0$ and $c_g, c_f > 0$. It holds $f^\ast(y) = \frac{c_f}{2}y^2$ for all $y\in \bbR$. The primal-dual diffusion equation \eqref{eq:ct-pd-diffusion-jointvar} for the joint variable $Z_t = (X_t, Y_t)^T \in \bbR^2$ reads
    \begin{equation*}
        \mathrm{d}Z_t = -A_{\tup{PD}}Z_t \mathrm{d}t + B_{\tup{PD}} \mathrm{d}W_t, \quad \text{with } A_{\tup{PD}} = \begin{pmatrix} c_g^{-1} & k\\ -\lambda k & \lambda c_f \end{pmatrix}, B_{\tup{PD}} = \begin{pmatrix} \sqrt{2}\\ 0\end{pmatrix}
    \end{equation*}
    where $W_t$ is one-dimensional Brownian motion and the weight $\lambda >0$ relates primal and dual time scales. Denoting $E(t) := (e_{ij}(t))_{i,j} := \exp(-tA_{\tup{PD}}^T)$, the following statements hold:
    \begin{enumerate}[label=(\roman*)]
        \item\label{item:1d-gauss-example-smoothness-of-soln} For any (possibly singular) initialization $\pi_0$ such that $Z_0 \sim \pi_0$, the distribution $\pi_t \coloneqq \Law(Z_t)$ has a $C^\infty$-smooth Lebesgue density for $t>0$.
        \item\label{item:1d-gauss-example-soln-for-gaussinit} If $\pi_0$ is Gaussian (including degenerate Gaussians with singular covariance matrix), then
        $\pi_t$ is (non-degenerate) Gaussian for all $t>0$. As $t \to \infty$, the solution $\pi_t$ converges to the Gaussian stationary $\pi_\lambda$ with mean zero and covariance matrix
        $$ \frac{1}{(c_f + k^2c_g)(1+\lambda c_f c_g)} \begin{pmatrix}
            c_g (c_f + k^2c_g) + \lambda c_f^2 c_g^2 & k \lambda c_f c_g^2 \\ k \lambda c_f c_g^2 & k^2 \lambda c_g^2
        \end{pmatrix}. $$
        The $x$-marginal $\mu_\lambda$ is not equal to the target $\mu^\ast$. However, the limiting joint distribution as $\lambda \to \infty$ is the degenerate Gaussian $(id, f'(k\,\cdot))_{\#} \mu^\ast$ whose $x$-marginal is the target $\mu^\ast$.
    \end{enumerate}
\end{theorem}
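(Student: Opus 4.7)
The plan is to exploit the linearity of the SDE: its drift is linear and its diffusion coefficient is constant, so the variation of constants formula yields the explicit solution
\begin{equation*}
    Z_t = e^{-tA_{\tup{PD}}} Z_0 + \eta_t, \qquad \eta_t \coloneqq \int_0^t e^{-(t-s)A_{\tup{PD}}} B_{\tup{PD}} \rmd W_s,
\end{equation*}
in which $\eta_t$ is independent of $Z_0$ and centered Gaussian with covariance matrix $C(t) \coloneqq \int_0^t e^{-(t-s)A_{\tup{PD}}} B_{\tup{PD}} B_{\tup{PD}}^T e^{-(t-s)A_{\tup{PD}}^T} \rmd s$. Everything in the proof reduces to analyzing these two objects directly.

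For part~(i), the key observation is that the pair $(A_{\tup{PD}}, B_{\tup{PD}})$ is Kalman-controllable: a direct computation gives $\det[B_{\tup{PD}}, A_{\tup{PD}} B_{\tup{PD}}] = -2\lambda k \neq 0$, since $k \neq 0$ and $\lambda > 0$. Standard linear-systems theory then implies that $C(t)$ is positive definite for every $t>0$, so $\eta_t$ admits a non-degenerate $C^\infty$-smooth Gaussian density on $\bbR^2$. Since $\pi_t$ is the law of the independent sum of $e^{-tA_{\tup{PD}}} Z_0$ and $\eta_t$, it is the convolution of an arbitrary probability measure (the push-forward of $\pi_0$ under the invertible map $e^{-tA_{\tup{PD}}}$) with a Schwartz-class density, and hence $C^\infty$ itself. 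This handles the case of possibly singular $\pi_0$.

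For part~(ii), Gaussianity is preserved under affine maps and independent sums, so $\pi_t$ is Gaussian for all $t \ge 0$ whenever $\pi_0$ is. Its mean evolves via $m_t = e^{-tA_{\tup{PD}}} m_0$ and its covariance via $\Sigma_t = e^{-tA_{\tup{PD}}} \Sigma_0 e^{-tA_{\tup{PD}}^T} + C(t)$. Since $\tr(A_{\tup{PD}}) = c_g^{-1} + \lambda c_f > 0$ and $\det(A_{\tup{PD}}) = \lambda(c_f/c_g + k^2) > 0$, both eigenvalues of $A_{\tup{PD}}$ have strictly positive real part, so $m_t \to 0$ and $\Sigma_t \to \Sigma_\infty$, where $\Sigma_\infty$ is the unique symmetric positive definite solution of the algebraic Lyapunov equation $A_{\tup{PD}} \Sigma_\infty + \Sigma_\infty A_{\tup{PD}}^T = B_{\tup{PD}} B_{\tup{PD}}^T$. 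Writing the entries of $\Sigma_\infty$ as $a, b, c$, this reduces to three scalar equations which I solve successively as $c = kb/c_f$, $a = c_g(1-kb)$, and a linear equation for $b$ whose solution matches the claimed matrix.

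Finally, comparing the $(1,1)$ entry $a$ of $\Sigma_\infty$ with the variance $c_f c_g / (c_f + k^2 c_g)$ of $\mu^\ast$ shows a discrepancy proportional to $k^2 c_g^2 > 0$ for every finite $\lambda$, giving $\mu_\lambda \neq \mu^\ast$. Passing $\lambda \to \infty$ in the explicit $\Sigma_\infty$ collapses it to the rank-one matrix whose entries are proportional to $(1, k/c_f)^T(1, k/c_f)$, which is exactly the covariance of $(\id, f'(k\,\cdot))_\# \mu^\ast$ since $f'(y) = y/c_f$; the limiting distribution is therefore the degenerate Gaussian supported on the graph of $x \mapsto kx/c_f$, as claimed. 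No conceptually difficult step is involved once the linear SDE is solved explicitly; the main work is the routine controllability check in (i) and the elementary but tedious Lyapunov algebra in (ii).
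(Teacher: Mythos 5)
Your proposal is correct and yields all the claimed conclusions, but it follows a genuinely different route from the paper. The paper works on the \emph{density} side: it Fourier-transforms the Fokker--Planck equation, solves the resulting first-order PDE by the method of characteristics, and shows that the quadratic form $\int_0^t (B_{\tup{PD}}^T e^{-sA_{\tup{PD}}^T}\zeta)^2\,\rmd s$ is positive definite by checking that $(0,1)^T$ is not in an invariant subspace of $A_{\tup{PD}}^T$ (verifying that directly from the eigenvectors). It then recovers the stationary covariance by explicitly integrating entries of $E(t) = e^{-tA_{\tup{PD}}^T}$ over $[0,\infty)$. You work on the \emph{process} side instead: you write the variation-of-constants (Ornstein--Uhlenbeck) solution formula, invoke the Kalman rank condition $\det[B_{\tup{PD}},A_{\tup{PD}}B_{\tup{PD}}] = -2\lambda k \neq 0$ to get positive definiteness of the controllability Gramian $C(t)$, and obtain $\Sigma_\infty$ by solving the algebraic Lyapunov equation rather than integrating the matrix exponential. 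These are two faces of the same argument --- the paper's invariant-subspace criterion \emph{is} Kalman controllability stated in its Hautus form --- but your version is more algebraic and, I think, cleaner: the rank check replaces the explicit eigenvector inspection, and the Lyapunov equation avoids computing $\int_0^\infty E(s)^T B_{\tup{PD}}B_{\tup{PD}}^T E(s)\,\rmd s$ entry-by-entry (which the paper does and later, in the proof of the next theorem, has to repeat with a general $B_{\tup{gPD}}$ while ``omitting the terms''). The paper's Fourier route, on the other hand, has the pedagogical advantage of making the connection to Hörmander's work explicit (it cites the closed-form computation in \cite{Hoermander1967}), which foreshadows the hypoellipticity discussion in the subsequent \cref{thm:pd-hypoellipticity}. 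I verified your three Lyapunov equations $a = c_g(1-kb)$, $c = kb/c_f$, and $b(c_g^{-1} + k^2/c_f + \lambda k^2 c_g + \lambda c_f) = \lambda k c_g$: factoring the coefficient of $b$ as $(c_f+k^2c_g)(1+\lambda c_f c_g)/(c_f c_g)$ gives exactly the matrix stated in the theorem, and the $\lambda\to\infty$ limit is rank one with column space spanned by $(1, k/c_f)^T$, matching $(\id, f'(k\,\cdot))_\#\mu^\ast$.
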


\begin{proof}
    From the diffusion equation, it follows that the distribution $\pi_t(z) = \pi_t(x,y)$ of $Z_t$ satisfies the Fokker-Planck equation 
    $$ \partial_t \pi_t = \nabla_z \cdot (\pi_t A_{\tup{PD}}z) +  \Delta_x \pi_t,$$
    with an initial value $\pi_0$. The Cauchy problem can be solved in closed form, note that a similar computation has been carried out in \cite{Hoermander1967}. By taking the Fourier transform in the variable $z$ (denoted $\calF$), we obtain the first order initial value problem for $p_t(\zeta) = \calF(\pi_t)(\zeta)$ with $\zeta = (\xi,\omega)^T$:
    $$  \left\{\begin{array}{l}
        \partial_t p_t = - \zeta^T A_{\tup{PD}} \nabla_\zeta p_t - \xi^2 p_t\\
        p_0(\xi, \omega) = \calF \pi_0.
    \end{array}\right. $$
    The corresponding characteristic equations are
    \begin{align*}
        \dot \zeta(t) &= A_{\tup{PD}}^T \zeta(t)\\
        \dot p(t) &= - \xi^2(t) p(t),
    \end{align*}
    and they are solved by
    \begin{align*}
        \zeta(t) &= \exp(tA_{\tup{PD}}^T) \zeta_0 \\
        p(t) &= p_0(\zeta_0) \exp \left(- \int_0^t \xi^2(s) \rmd s \right)
    \end{align*}
    By eliminating the initial value $\zeta_0 = \exp(-tA_{\tup{PD}}^T) \zeta(t)$, the solution for the initial value problem is
    $$ p_t(\zeta) = p_0(\exp(-tA_{\tup{PD}}^T)\zeta) \exp \left( - \frac{1}{2} \int_0^t (B_{\tup{PD}}^T \exp(-sA_{\tup{PD}}^T) \zeta)^2 \rmd s \right).
    $$
    The quadratic form inside the integral is positive semidefinite in the variable $\zeta$. For $t>0$, it is also positive definite: Otherwise, there would need to be a $\zeta$ such that $B_{\tup{PD}}^T \exp(-sA_{\tup{PD}}^T) \zeta = 0$ for all $s \in [0,t]$, which implies $B_{\tup{PD}}^T (A_{\tup{PD}}^T)^k \zeta = 0$ for all $k\ge 0$. This is equivalent to $(0,1)^T$ being an element of an invariant subspace of $A_{\tup{PD}}^T$, which is impossible since the eigenvectors of $A_{\tup{PD}}^T$ are of the form
    $$\left(\frac{c_g^{-1}-\lambda c_f}{2k} \pm \frac{1}{2k}\sqrt{(c_g^{-1} - \lambda c_f)^2-4\lambda k^2},1\right)^T,$$
    neither of which is a multiple of $(0,1)^T$. From the positive definiteness of the quadratic form it follows that $p_t$ is a Schwartz function for every $t >0$. In particular, its inverse Fourier transform $\pi_t$ is $C^\infty$-smooth in the variable $z$ for $t>0$, which proves \ref{item:1d-gauss-example-smoothness-of-soln}.

    If the initialization $\pi_0$ is Gaussian with mean $z_0 = (x_0,y_0)$ and (possibly singular) covariance $\Sigma_0$, we can simply take the inverse Fourier transform to obtain the solution $\pi_t$. It is worth noting, however, that we do not even have to compute the full distribution $\pi_t$ in order to analyze the $x$-marginal $\mu_t$ of $\pi_t$. Since marginalizing out the dual variable $y$ in $\pi_t$ corresponds to evaluating $p_t$ at $\zeta = (\xi, 0)$ in Fourier space, we get, denoting by $\calF^{-1}_\xi$ the inverse Fourier transform in $\xi$ and $E(t) := (e_{ij}(t))_{i,j} := \exp(-tA_{\tup{PD}}^T)$,
    \begin{align*}
        \mu_t(x) &:= \int \pi_t(x,y) \rmd y\\
        &= \calF^{-1}_\xi (p_t(\xi,0)) (x)\\
        &\propto \int \exp \left(i \xi (x - z_0 \cdot e_1(t)) - \frac{\xi^2}{2} \left(\norm{(e_1(t)}_{\Sigma_0}^2 + 2\int_0^t e^2_{11}(s) \rmd s\right) \right) \rmd \xi\\
        &= \exp\left(- \frac12(x - z_0\cdot e_1(t))^2 \left(\norm{(e_1(t)}_{\Sigma_0}^2 + 2 \int_0^t e^2_{11}(s) \rmd s \right)^{-1} \right),
    \end{align*}
    where we abbreviated $e_1(t)$ for the first column of $E(t)$. Hence, the primal variable follows a Gaussian distribution at every time $t>0$, with mean $z_0^T e_1(t)$ and variance $\norm{(e_1(t)}_{\Sigma_0}^2 + 2\int_0^t e^2_{11}(s) \rmd s$. Due to the contraction result \cref{thm:convergence-pd-ct}, we know that $\mu_t$ converges to the stationary distribution $\mu_\lambda$ as $t\to \infty$. Indeed, this holds here since $\lim_{t\to \infty} E(t) = 0$, so $\bbE_{X \sim \mu_\lambda}[X] = 0$. The variance of $\mu_\lambda$ is
    $$ \Var_{X \sim \mu_\lambda}[X] = 2\int_0^\infty e^2_{11}(s) \rmd s = \frac{c_g(c_f + k^2c_g) + \lambda c_f^2c_g^2}{(c_f + k^2c_g)(1 + \lambda c_f c_g)}, $$
    which does not coincide with the variance of the target distribution $\mu^\ast$.
    The target is however approximated as $\lambda$ becomes large, since $\lim_{\lambda \to \infty} \Var[\mu_\lambda] = c_gc_f/(c_f + k^2 c_g) $.

    Analogously, the full solution $\pi_t$ of the Fokker-Planck equation can be computed by taking the inverse Fourier transform in $\zeta$ above. We obtain that $\pi_t$ is Gaussian at every time $t>0$ with mean $E(t)^Tz_0$ and positive definite covariance matrix
    $$ \Var_{Z\sim \pi_t}[Z] = E(t)^T \Sigma_0 E(t) + 2\begin{pmatrix}
        \int_0^t e_{11}^2 \rmd s & \int_0^t e_{11} e_{12} \rmd s\\
        \int_0^t e_{11} e_{12} \rmd s & \int_0^t e_{12}^2 \rmd s
    \end{pmatrix}.  $$
    In the limit $t\to \infty$, we obtain the stationary distribution which has mean zero and covariance matrix
    $$ \Var_{Z\sim \pi_\lambda}[Z] = \frac{1}{(c_f + k^2c_g)(1+\lambda c_f c_g)} \begin{pmatrix}
        c_g (c_f + k^2c_g) + \lambda c_f^2 c_g^2 & k \lambda c_f c_g^2 \\ k \lambda c_f c_g^2 & k^2 \lambda c_g^2
    \end{pmatrix} $$
    In the limit $\lambda \to \infty$, the stationary distribution approaches a degenerate Gaussian (in the sense that the covariance matrix is singular), with mean zero and covariance matrix
    $$ \frac{1}{(c_f + k^2c_g)} \begin{pmatrix}
        c_f c_g & kc_g \\ kc_g & k^2 c_f^{-1} c_g
    \end{pmatrix}. $$
    This is identical to $(id, f'(k\,\cdot))_{\#} \mu^\ast$, proving \ref{item:1d-gauss-example-soln-for-gaussinit}.
\end{proof}

The negative result \cref{thm:1d-gauss-example} naturally raises the question whether the diffusion equation can be modified such that the target becomes its stationary distribution. While we will consider the more general case of non-homogeneous, problem-dependent diffusion coefficients in \cref{subsec:bias-corrected-diffusion}, the (algorithmically) most straightforward approach would be to add homogeneous diffusion not only in the primal variable $X^n$ of the algorithm, but also in the dual $Y^n$, as in \eqref{eq:general_pd_sampling_update}. Unfortunately, the next theorem shows that the time-continuous limit \eqref{eq:ct-general-pd-diffusion-jointvar} always remains asymptotically inconsistent.

\begin{theorem}\label{thm:1d-gauss-no-homogeneous-correction}
    Consider the same target $\mu^\ast$ as in \cref{thm:1d-gauss-example}, but the more general diffusion \eqref{eq:ct-general-pd-diffusion-jointvar}
    \begin{equation}\label{eq:ctpd-1dgaussex-generaldiffcoeff}
        \mathrm{d}Z_t = -A_{\tup{PD}}Z_t \mathrm{d}t + B_{\tup{gPD}} \mathrm{d}W_t, \quad \text{with } A_{\tup{PD}} = \begin{pmatrix} c_g^{-1} & k\\ -\lambda k & \lambda c_f \end{pmatrix}, B_{\tup{gPD}} = \begin{pmatrix} b_{11} & b_{12}\\ b_{21} & b_{22}\end{pmatrix}
    \end{equation}
    where $W_t$ is Brownian motion in $\bbR^2$. Then there exists no choice for the entries of $B_{\tup{gPD}} \in \bbR^{2\times 2}$ independent of $c_g, c_f$ and $k$ such that the stationary solution $\pi_\lambda$ of \eqref{eq:ctpd-1dgaussex-generaldiffcoeff} has $x$-marginal $\mu^{\ast}$. 
\end{theorem}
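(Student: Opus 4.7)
The plan is to reduce the problem to a Lyapunov equation for the stationary covariance and then rule out solutions by matching terms in the parameters $c_f,c_g,k$. Since the drift is linear and the diffusion coefficient is constant, the stationary distribution $\pi_\lambda$ is a centered Gaussian (this can be argued as in the proof of \cref{thm:1d-gauss-example}, or by noting that the Fourier transform of $\pi_t$ satisfies a linear transport equation with a quadratic damping term that preserves Gaussianity). Its covariance $\Sigma_\lambda = \begin{pmatrix} s_{11} & s_{12}\\ s_{12} & s_{22}\end{pmatrix}$ is the unique positive-definite solution of the continuous Lyapunov equation
\begin{equation*}
    A_{\tup{PD}}\,\Sigma_\lambda + \Sigma_\lambda\,A_{\tup{PD}}^T \;=\; C, \qquad C \coloneqq B_{\tup{gPD}} B_{\tup{gPD}}^T.
\end{equation*}
I would write $C=\begin{pmatrix} c_{11} & c_{12}\\ c_{12} & c_{22}\end{pmatrix}$ and note that $C$ is automatically symmetric positive semidefinite, so in particular $c_{11}\ge 0$ and $c_{22}\ge 0$. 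The requirement on $\pi_\lambda$ is that its $x$-marginal equal $\mu^\ast$, i.e. $s_{11} = c_fc_g/(c_f+k^2 c_g)$, and this must hold for every choice of $c_f,c_g>0$ and $k\neq 0$ with the entries $c_{ij}$ kept fixed.

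Next I would expand the three scalar Lyapunov equations (one for each of the $(1,1),(1,2),(2,2)$ entries) to obtain an explicit linear system for $s_{11},s_{12},s_{22}$ in terms of $c_{11},c_{12},c_{22}$. Eliminating $s_{12}$ from the $(1,1)$ equation and $s_{22}$ from the $(2,2)$ equation, I plug both into the $(1,2)$ equation. The coefficient of $s_{11}$ that emerges simplifies to $\tfrac{2(1+\lambda c_fc_g)(c_f+k^2c_g)}{c_fc_g^2}$, which is exactly the reciprocal (up to a factor) of the target variance times the expected prefactor, so solving for $s_{11}$ and equating to $c_fc_g/(c_f+k^2c_g)$ yields, after multiplying out, the scalar identity
\begin{equation*}
    \frac{c_{11}}{c_g} + \lambda c_f\, c_{11} + \frac{k^2}{c_f}\Big(c_{11}+\tfrac{c_{22}}{\lambda}\Big) - 2k\,c_{12} \;=\; \frac{2}{c_g} + 2\lambda c_f,
\end{equation*}
which must hold as a function of $c_f,c_g,k$ with $c_{11},c_{12},c_{22}$ fixed.

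The conclusion then follows by comparing the elementary monomials in $c_f,c_g,k$ on both sides. Matching the $1/c_g$ term forces $c_{11}=2$; matching the $c_f$ term gives the same constraint $c_{11}=2$, consistent; matching the $k$ term (which is absent on the right) forces $c_{12}=0$; but matching the $k^2/c_f$ term forces $c_{11}+c_{22}/\lambda=0$, and since $c_{11}\ge 0$ and $c_{22}\ge 0$ this is incompatible with $c_{11}=2$. This contradiction shows that no constant matrix $B_{\tup{gPD}}$ can make $\pi_\lambda$ have $x$-marginal $\mu^\ast$ for all parameter values simultaneously. The main obstacle in this proof is purely bookkeeping: one has to be careful that the Lyapunov equation is well-posed (which follows from Hurwitz stability of $A_{\tup{PD}}$, a consequence of \cref{thm:convergence-pd-ct}) and that the identification of monomials in the three independent variables $c_f,c_g,k$ is legitimate, so no further hidden cancellations relieve the contradiction.
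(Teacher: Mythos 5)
Your proof is correct, and it reaches the same contradiction as the paper via a slightly different computational route. The paper computes the stationary covariance explicitly as $\lim_{t\to\infty} 2\int_0^t E(s)^T B_{\tup{gPD}}B_{\tup{gPD}}^T E(s)\,\rmd s$ with $E(s)=\exp(-sA_{\tup{PD}}^T)$, whereas you bypass the matrix exponential entirely by solving the continuous Lyapunov equation $A_{\tup{PD}}\Sigma_\lambda + \Sigma_\lambda A_{\tup{PD}}^T = B_{\tup{gPD}}B_{\tup{gPD}}^T$ as a $3\times 3$ linear system. These are of course equivalent characterizations of the stationary covariance for a Hurwitz-stable linear SDE, and both approaches isolate the same coefficient of $k^2$ to derive the constraint $c_{11}+c_{22}/\lambda=0$. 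Your argument is arguably a bit cleaner: it avoids integrating the matrix exponential, and by also extracting $c_{11}=2$ from the $1/c_g$ monomial it produces a direct contradiction with positive semidefiniteness of $B_{\tup{gPD}}B_{\tup{gPD}}^T$, whereas the paper's phrasing stops at ``only satisfied if all $b_{ij}$ are zero'' and leaves the unacceptability of $B_{\tup{gPD}}=0$ implicit. One small remark: invoking \cref{thm:convergence-pd-ct} to justify Hurwitz stability is slightly indirect since that theorem addresses the general nonlinear case; here it follows immediately from $\tr A_{\tup{PD}} = c_g^{-1}+\lambda c_f>0$ and $\det A_{\tup{PD}} = \lambda(c_f c_g^{-1}+k^2)>0$. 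Your monomial-matching step is legitimate because the identity must hold for all $c_f,c_g>0$ and $k\neq 0$ with $c_{11},c_{12},c_{22}$ fixed, and the terms $1/c_g$, $c_f$, $k$, $k^2/c_f$ are functionally independent in those three free variables.
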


\begin{proof}
    We start by noting that the contraction result \cref{thm:convergence-pd-ct} can be applied also the more general $\Sigma_{\tup{gPD}} = \frac 12 B_{\tup{gPD}} B_{\tup{gPD}}^T$ with any constant coefficient $B_{\tup{gPD}}$, since the relevant \cref{lem:stability_general_dynamics_cont_time} holds for any constant diffusion coefficient. In view of this contraction property, we can assume the initialization $Z_0 \sim \pi_0$ to be Gaussian with mean $z_0$, covariance $\Sigma_0$ and obtain $\pi_\lambda$ by computing $\lim_{t \to \infty} \pi_t$. The reason for initializing at a Gaussian is that we can again obtain the closed-form solution $\pi_t$ of the Cauchy problem for the Fokker-Planck equation. We carry out the same computations as in the proof of \cref{thm:1d-gauss-example}, showing that $\pi_t$ is Gaussian with mean $E(t)^Tz_0$ and covariance matrix 
    $$ E(t)^T \Sigma_0 E(t) + 2 \int_0^t E(s)^T B_{\tup{gPD}} B_{\tup{gPD}}^T E(s) \rmd s. $$
    The entries of $E$ can be computed explicitly (we omit the terms here) and by integrating and taking the limit $t\to \infty$, we find that the $x$-marginal of the stationary distribution $\pi_\lambda$ has variance
    \begin{equation*}
        \Var[\mu_\lambda] = \frac{(b_{11}^2+b_{12}^2)(\lambda c_f^2 c_g^2 + c_g(c_f + k^2c_g)) - 2(b_{11}b_{21}+b_{12}b_{22})kc_f c_g^2 + (b_{21}^2+b_{22}^2)\lambda^{-1}k^2c_g^2}{2(1+\lambda c_f c_g)(c_f+k^2 c_g)}.
    \end{equation*}
    We want to show that there is no choice of $b_{ij}$ (independent of $c_f,c_g$ and $k$) such that the target $\mu^\ast$ coincides with the $x$-marginal $\mu_\lambda$ of the stationary solution. The variance of both distributions is equal if and only if
    $$ 2c_g c_f (1+\lambda c_g c_f) = (b_{11}^2+b_{12}^2)(\lambda c_f^2 c_g^2 + c_g(c_f + k^2c_g)) - 2(b_{11}b_{21}+b_{12}b_{22})kc_f c_g^2 + (b_{21}^2+b_{22}^2)\lambda^{-1}k^2c_g^2. $$
    Isolating the terms containing $k^2$ shows that this implies
    $$ 0 = (b_{11}^2+b_{12}^2) c_g^2 + (b_{21}^2+b_{22}^2) \lambda^{-1} c_g^2, $$
    which is only satisfied if all $b_{ij}$ are zero since $\lambda > 0$. Hence, a problem-independent choice for $B_{\tup{cPD}}$ that results in the target being invariant with respect to the diffusion does not exist.
\end{proof}

We want to elaborate more on the concentration properties of $\pi_\lambda$ in the scheme \eqref{eq:ct-pd-diffusion-jointvar} with primal diffusion. \cref{thm:1d-gauss-example} showed that in the limiting case $\lambda \to \infty$, the stationary distribution does not have a Lebesgue density anymore since the mass concentrates on the optimality condition $\partial f^\ast(y) = Kx$. In that case, the primal marginal actually converges to the target distribution, which is intuitively clear since inserting $Y_t \in \partial f(KX_t)$ into the equation for $X_t$ reduces the system to overdamped Langevin diffusion. We will rigorously generalize these properties of the toy example of \cref{thm:1d-gauss-example} in two ways. Specifically, we show in \cref{subsec:convergence-to-ula} under which conditions the stationary solution converges, as $\lambda \to \infty$, to a singular push-forward of the target $\mu^\ast$ in the space of distributions over $\calZ$. Also, we next show that the concentration at the optimal dual value generally does not occur for finite $\lambda$, even though the diffusion coefficient is degenerate. 

In the case of smooth coefficients, the Fokker-Planck operator can be shown to be hypoelliptic, implying that any solution to the dynamics has a smooth density with respect to the Lebesgue measure for all times $t>0$, even in the case of a concentrated initialization.

\begin{definition}
    Let $\calL$ be a partial differential operator with smooth coefficients. If, for every open set $\Omega$ and every distribution $u$ on $\Omega$, $\calL u \in C^\infty(\Omega)$ implies $u\in C^\infty(\Omega)$, then $\calL$ is called hypoelliptic.
\end{definition}

Every elliptic operator is hypoelliptic, and equally any operator of the form $-\partial_t + \calL$ where $\calL$ is the generator of a diffusion process with uniformly positive definite diffusion coefficient. A special case is overdamped Langevin diffusion with smooth drift, which naturally has a smooth transition density.

The degenerate diffusion coefficient in \eqref{eq:ct-pd-diffusion-jointvar} results in a constant but only positive semidefinite diffusion coefficient $\Sigma_{\tup{PD}}$ in the Fokker-Planck equation \eqref{eq:fp-pd}. In order to prove that the Fokker-Planck equation is hypoelliptic, we hence have to employ a weaker condition for hypoellipticity, usually termed `Hörmander's condition' \cite{Hoermander1967}. In order to state it, we need the definition of the commutator operation and Lie algebras spanned by smooth vector fields.

\begin{definition}
    For two vector fields $A,B$ on $\bbR^n$, the Lie bracket or commutator $[A,B]$ is the vector field on $\bbR^n$ defined by
    $$ [A,B](x) = DB(x) A(x) - DA(x) B(x), $$
    with the derivative matrices $(DA)_{i,j}(x) = \partial_j A_i(x)$. 
\end{definition}

\begin{lemma}[\cite{Hoermander1967}]\label{lem:hoermander-condition-hypoellipticity}
    Assume that the generator $\calL$ of a diffusion process with state space $\bbR^n$ has $C^\infty$-smooth coefficients and can be written in the form
    $$ \calL = A_0 + \sum_{j=1}^n A_j^2$$
    with first-order differential operators $A_0, \dots, A_n$. Interpreting the differential operators as vector fields, define a collection of vector fields $\calA_k$ by
    \begin{equation*}
    \calA_0 \coloneqq \{A_j\,:\, j=1,\dots,n\},\quad \calA_{k+1} = \calA_k \cup \{ [A,A_j]\,:\,A\in \calA_k, j=0,\dots,n \}.
    \end{equation*}
    If the vector spaces $\calA_k(x) \coloneqq \spanop\left(\{A(x)\,:\, A\in \calA_k \}\right)$ satisfy $\bigcup_{k\ge 0}\calA_k(x) = \bbR^n$ for every $x \in \bbR^n$, then $\calL$ is said to satisfy the parabolic Hörmander condition. In that case, $\partial_t - \calL$ and $\partial_t-\calL^\ast$ are both hypoelliptic, hence any process generated by $\cal L$ has $C^\infty$-smooth transition probability. In particular, any solution to the Fokker-Planck equation $ \partial_t p = \calL^\ast p $ is $C^\infty((0,\infty)\times \bbR^n)$-smooth.
\end{lemma}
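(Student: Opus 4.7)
Since the statement is Hörmander's 1967 theorem on hypoellipticity of second-order operators satisfying the bracket condition, the plan is to recall its classical proof rather than propose a new argument. The strategy is to establish a subelliptic estimate
\begin{equation*}
    \norm{u}_{\epsilon} \le C\left(\norm{(\partial_t - \calL) u}_0 + \norm{u}_0\right)
\end{equation*}
for some $\epsilon>0$ and every $u \in C_c^\infty$ supported in a fixed coordinate chart; once such an estimate is in place, a standard Friedrichs-mollifier and pseudodifferential cut-off argument shows that $(\partial_t - \calL)u \in C^\infty$ locally implies $u \in C^\infty$, and the analogous reasoning applied to $\partial_t - \calL^\ast$ handles the second half of the claim.

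First I would derive the basic estimate. Writing $\calL = A_0 + \sum_{j=1}^n A_j^2$, integration by parts (using $A_j^2 = -A_j^\ast A_j + (\textrm{first-order})$) yields, for smooth compactly supported $u$,
\begin{equation*}
    \sum_{j=1}^n \norm{A_j u}_0^2 \le \abs{\inpro{\calL u}{u}} + C\norm{u}_0^2,
\end{equation*}
which gives $L^2$-control of the directions $\calA_0 = \{A_1,\dots,A_n\}$ at the price of one derivative on the right-hand side. This is the base case of the inductive bootstrap.

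The heart of the proof is to propagate this control across commutators. Inductively on the levels $\calA_k$ from Hörmander's condition, one uses the identity
\begin{equation*}
    \inpro{[A, A_j] u}{v} = \inpro{A_j u}{A^\ast v} - \inpro{A u}{A_j^\ast v}
\end{equation*}
for $A \in \calA_k$ to bound $[A,A_j]u$ in a Sobolev norm of fractionally lower order, combining the inductive hypothesis with the basic estimate; commutators taken with $A_0$ cost a full derivative instead of a half. Because the parabolic Hörmander condition guarantees $\bigcup_k \calA_k(x) = \bbR^n$ at every $x$, a compactness and partition-of-unity argument produces a uniform $\epsilon > 0$ such that every coordinate derivative $\partial_{x_i} u$ is controlled by $\calL u$ and $u$, which rearranges to the desired subelliptic estimate.

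The main obstacle is that the commutator bootstrap must be carried out at the symbol level in the Kohn-Nirenberg pseudodifferential calculus in order to track fractional Sobolev losses and the commutators of pseudodifferential operators rigorously. A conceptually lighter alternative that I would present in parallel is the probabilistic proof via Malliavin calculus: represent $\calL$ as the generator of the Stratonovich SDE $\rmd X_t = A_0(X_t)\rmd t + \sum_j A_j(X_t) \circ \rmd W^j_t$ and identify the transition probability with $\Law(X_t)$. Hörmander's condition then translates, through iterated Stratonovich expansions of the derivative flow, into invertibility of the Malliavin covariance matrix with all negative moments finite, and the Malliavin smoothness criterion directly yields the $C^\infty$-transition density, bypassing pseudodifferential analysis altogether.
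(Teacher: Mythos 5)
The paper does not prove this lemma at all: it is stated explicitly as a citation to \cite{Hoermander1967}, with a pointer to Chapter 6 of \cite{Pavliotis2014} for a textbook treatment, and the authors take the result as a black box. Your proposal, by contrast, is a high-level recollection of the two standard proofs, so rather than replicating the paper's approach you are supplying background the paper deliberately omits. That said, your sketch is accurate: the quadratic-form inequality $\sum_j\norm{A_ju}_0^2 \le \abs{\inpro{\calL u}{u}} + C\norm{u}_0^2$ is the correct base case, the commutator identity $\inpro{[A,A_j]u}{v} = \inpro{A_ju}{A^\ast v} - \inpro{Au}{A_j^\ast v}$ is the correct bootstrap device, and your remark that brackets with $A_0$ (and equivalently with $\partial_t$) cost a whole derivative rather than a half is exactly the point that makes the \emph{parabolic} version of the bracket condition the right one here. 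The closing remark about carrying this out in the Kohn--Nirenberg pseudodifferential calculus, and the alternative Malliavin-calculus route via the Stratonovich SDE and the invertibility of the Malliavin covariance matrix, are both faithful to the literature. One small caveat worth flagging if you were to flesh this out: the operator actually analysed is $\partial_t - \calL$ in the $(t,x)$-variables, so the effective drift field is $\partial_t - A_0$, and the Hörmander condition on $\bbR^n$ stated in the lemma is only sufficient because the $\partial_t$ direction is supplied for free; your sketch implicitly assumes this but does not say it. As a blind reconstruction of a cited classical theorem, though, the proposal is sound.
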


We refer to Chapter 6 of the book \cite{Pavliotis2014} for a discussion in the context of diffusion SDEs and an application to the Langevin diffusion SDE with friction. 
For the diffusion equation arising in our primal-dual setting, the generator is given by
$$\calL_{\tup{PD}}u = - A_{\tup{PD}}^T(z)\nabla_z u + \Delta_x u. $$
The following result gives a sufficient condition under which the primal-dual dynamics considered here satisfy Hörmander's condition, so that the corresponding Fokker-Planck operator is hypoelliptic.

\begin{theorem}\label{thm:pd-hypoellipticity}
    Assume that $g \in C^{\infty}(\calX)$ and $f^\ast \in C^\infty(\calY)$, so that the diffusion \eqref{eq:ct-pd-diffusion-jointvar} has smooth coefficients. If $\dim(\calY) = m \le d = \dim(\calX)$ and $K$ has full rank $m$, then $\calL_{\tup{PD}}$ satisfies Hörmander's condition. As a consequence, the stationary solution $\pi_\lambda$ has a $C^\infty(\calZ)$-smooth Lebesgue density.
\end{theorem}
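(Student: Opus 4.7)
The plan is to verify the parabolic Hörmander condition of \cref{lem:hoermander-condition-hypoellipticity} at the first level of iterated brackets, and then read off $C^\infty$-smoothness of $\pi_\lambda$ as a corollary. Since the diffusion coefficient $B_{\tup{PD}} = (\sqrt 2\, I_d,\,0)^T$ acts only on the primal variable, the natural Hörmander decomposition is
$$
\calL_{\tup{PD}} = A_0 + \sum_{j=1}^d A_j^2,\qquad A_j \coloneqq \partial_{x_j},\qquad A_0 \coloneqq -A_{\tup{PD}}(z)\cdot \nabla_z,
$$
so that $\sum_{j=1}^d A_j^2 = \Delta_x$ matches the second-order part of the generator. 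The smoothness hypotheses on $g$ and $f^\ast$ make $A_0$ a smooth vector field on $\calZ$, hence all iterated Lie brackets are well defined, and $\calA_0(z) = \spanop\{\partial_{x_1},\dots,\partial_{x_d}\}$ already spans the $\calX$-direction at every $z$; only the $\calY$-direction is missing.

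The main step is the calculation of the first-order commutators $[\partial_{x_j},A_0]$. A direct application of the product rule, writing $a_i$ and $b_l$ for the $\calX$- and $\calY$-components of $-A_{\tup{PD}}$, gives
$$
[\partial_{x_j},A_0](z) = \sum_{i=1}^d (\partial_{x_j} a_i)(z)\,\partial_{x_i} + \sum_{l=1}^m (\partial_{x_j} b_l)(z)\,\partial_{y_l}.
$$
The decisive observation is that the $\calY$-component of $-A_{\tup{PD}}$ is affine in $x$ with linear part $\pm\lambda K x$ (the sign depending on the convention of \eqref{eq:coeff-ct-pd-diffusion}) and contains no further $x$-dependence. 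Consequently $\partial_{x_j} b_l = \pm\lambda K_{lj}$ is a constant, independent of $z$ and in particular independent of $\nabla f^\ast$, so the $\calY$-component of $[\partial_{x_j},A_0](z)$ is the fixed vector $\pm\lambda K e_j\in\bbR^m$, while its $\calX$-component is some multiple of a column of $-\nabla^2 g(x)$.

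Putting this together, $\calA_1(z)$ contains the $d$ vectors $\partial_{x_1},\dots,\partial_{x_d}$ from $\calA_0$ together with $d$ further vectors whose $\calY$-components are $\pm\lambda K e_1,\dots,\pm\lambda K e_d$. Subtracting suitable linear combinations of the former from the latter annihilates the $\calX$-component and yields the pure $\calY$-vectors $(0,\pm\lambda K e_j)$ for $j=1,\dots,d$; their span in $\bbR^m$ is $K(\bbR^d)$, which equals $\bbR^m$ exactly when $K$ has full rank $m$ with $m\le d$, as assumed. Therefore $\calA_1(z) = \bbR^{d+m}$ at every $z \in \calZ$, so Hörmander's condition is satisfied already at level one. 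The only potential obstacle, that the $\calX$-component of the brackets involves $\nabla^2 g$ which may degenerate or vary with $x$, is irrelevant for the spanning argument because $\calA_0$ already controls the $\calX$-direction, so we never rely on Hessian information to generate directions.

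Finally, \cref{lem:hoermander-condition-hypoellipticity} yields hypoellipticity of $\partial_t - \calL_{\tup{PD}}^\ast$. Applied to the time-independent function $u(t,z) \coloneqq \pi_\lambda(z)$, which satisfies $(\partial_t - \calL_{\tup{PD}}^\ast) u = 0 \in C^\infty$, this forces $\pi_\lambda \in C^\infty(\calZ)$, as claimed.
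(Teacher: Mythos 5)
Your proof is correct and follows essentially the same approach as the paper: the same decomposition $\calL_{\tup{PD}} = A_0 + \sum_{j=1}^d A_j^2$ with $A_j = \partial_{x_j}$, the same computation of the first-level brackets $[\partial_{x_j},A_0]$, and the same observation that their $\calY$-components are the (constant) columns of $\pm\lambda K$, which span $\calY$ under the full-rank assumption. Your added remarks about the Hessian of $g$ being irrelevant and your explicit last step applying hypoellipticity to the time-independent $\pi_\lambda$ are correct fillings-in of detail the paper leaves implicit.
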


\begin{proof}
    We apply \cref{lem:hoermander-condition-hypoellipticity}. $\calL_{\tup{PD}}$ has the correct form with vector fields $A_j = \partial_{x_j}$, $j = 1,\dots, d$ and 
    $$A_0 = - A_{\tup{PD}}^T(z)\nabla_z = \sum_{j = 1}^d \left(- \frac{\partial g}{\partial x_j}(x) - k_j^Ty\right)\partial_{x_j} + \lambda \sum_{i = 1}^m \left((k^i)^Tx - \frac{\partial f^\ast}{\partial y_i}(y)\right)\partial_{y_i},$$
    where we denoted the columns of $K$ by $k_1,\dots,k_d$ and the rows by $k^1,\dots,k^m$.
    Clearly, the vector fields $A_1,\dots,A_k$ span the first $d$ dimensions $\partial_{x_j}$, $j = 1,\dots,d$, i.e. $\calA_0(z) = \calX$ for all $z \in \calZ$. The Lie brackets in $\calA_1$ are given by
    $$ [A_j,A_0] = - (H_g(x))_j^T \nabla_x - \lambda k_j^T\nabla_y, $$
    where $H_g$ is the Hessian of $g$. Due to $\lambda > 0$ and the full rank assumption, the columns $k_j$ of $K$ span $\calY$ and we have $\calA_1(z) = \calZ$ for all $z \in \calZ$.
\end{proof}

While the smoothness assumption on $g,f^\ast$ is necessary for technical reasons, our numerical test validated that the dispersion in the dual variable is a more general phenomenon. Concentration in the dual variable can generally not to be expected in the primal-dual dynamics.

\subsection{Convergence to Overdamped Langevin Diffusion}\label{subsec:convergence-to-ula}
We now continue by showing that in the limiting case $\lambda \to \infty$ and under some smoothness assumption on $f$, the stationary solution $\pi_\lambda$ of \eqref{eq:ct-pd-diffusion-jointvar} becomes singular. In the limit, the primal marginal distribution coincides with the target.

Consider a solution $(X_t,Y_t)$ of \eqref{eq:ct-pd-diffusion-jointvar} with distribution $(X_t,Y_t) \sim \pi_t$, denoting the $x$-marginal by $X_t\sim \mu_t$. We are interested in the behaviour of $\mu_t$ in the limit $\lambda \rightarrow \infty$. Intuitively, as $\lambda$ becomes large, it is clear that $Y_t$ is forced to concentrate, with the (formally) limiting SDE taking the form
\begin{align*}\label{eq:ld-artificialpd}
    d X_t &\in - (\partial g ( X_t) + K^T Y_t dt + \sqrt{2}dW_t\\
    Y_t &\in \partial f(K X_t)).
\end{align*}
This coincides with overdamped Langevin diffusion on the potential $g+f\circ K$, with the target $\mu^\ast$ as invariant distribution.

This suggests we can use a coupling argument between \eqref{eq:ld-artificialpd} and the primal-dual dynamics \eqref{eq:ct-pd-diffusion-jointvar} to bound the Wasserstein distance between the stationary state of \eqref{eq:ct-pd-diffusion-jointvar} and the target.

\begin{theorem}\label{thm:ct-convergence-to-overdamped-langevin}
    Assume that $f \in C^3$, that $g,f^\ast$ are $\omega_g$- and $\omega_{f^\ast}$-strongly convex, respectively, and that the target distribution $\mu^\ast$ satisfies
    \begin{gather*}
        \bbE_{X \sim \mu^\ast} \left[ \norm{\nabla (\nabla f \circ K) (X)}_F^2 \right] =: D_1 < \infty,\\
        \bbE_{X \sim \mu^\ast} \left[ \norm {\nabla \cdot \nabla (\nabla f \circ K) (X)}^2 \right] =: D_2 < \infty,\\
        \bbE_{X \sim \mu^\ast} \left[ \norm{ \nabla (\nabla f \circ K) (X) \left(\nabla g(X) + K^T \nabla f (KX)\right)}^2 \right] =: D_3 < \infty,
    \end{gather*}
    with constants $D_i$, where in the second line the Laplacian is understood acting component-wise on the $\calY$-valued function $\nabla f \circ K$. For $\omega_g/\omega_{f^\ast} < \lambda$ and a constant $C_1(\lambda) = (\lambda \omega_g / D_1)^{-1/2} + \calO(\lambda^{-3/2})$ as $\lambda \to \infty$, the following two statements hold:
    \begin{enumerate}
        \item If the primal-dual dynamics \eqref{eq:ct-pd-diffusion-jointvar} are initialized with $X_0 \sim \mu_0$ for some $\mu_0 \in \calP_2(\calX)$ and $Y_0$ concentrated at the optimality condition $Y_0 = \nabla f (KX_0)$, then for every $t > 0$ it holds
        $$ \calW_2(\mu_t, \mu^\ast) \le \left((1+ \lambda^{-1}\omega_{f^\ast} \norm{K})^{1/2} \calW_2(\mu_0, \mu^\ast) + C_1(\lambda)\right) e^{-\omega_g t} + C_1(\lambda). $$
        \item The $x$-marginal $\mu_\lambda$ of the stationary solution $\pi_\lambda$ of \eqref{eq:ct-pd-diffusion-jointvar} satisfies
        $$ \calW_2(\mu_\lambda, \mu^\ast) \le C_1(\lambda). $$
    \end{enumerate}

\end{theorem}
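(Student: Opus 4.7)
The plan is to couple the primal-dual SDE \eqref{eq:ct-pd-diffusion-jointvar} with an overdamped Langevin trajectory $\tilde X_t$ for the target driven by the same Brownian motion, and track separately the primal mismatch $R_t := X_t - \tilde X_t$ and the dual optimality gap $V_t := Y_t - F(X_t)$, where $F := \nabla f \circ K$. Initialize $\tilde X_0 \sim \mu^\ast$ in an optimal $\calW_2$-coupling with $X_0 \sim \mu_0$; then $\tilde X_t \sim \mu^\ast$ for all $t$ by invariance, and the hypothesis $Y_0 = \nabla f(KX_0)$ makes $V_0 = 0$.

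Since both SDEs share the $\sqrt{2}\,\rmd W_t$ term, $R_t$ has no stochastic integral and satisfies the pathwise ODE $\dot R_t = -[\nabla g(X_t) - \nabla g(\tilde X_t)] - K^T[F(X_t) - F(\tilde X_t)] - K^T V_t$. Combining $\omega_g$-strong convexity of $g$ with the monotonicity of $F$ yields $\tfrac{\rmd}{\rmd t}\norm{R_t} \le -\omega_g\norm{R_t} + \norm{K}\norm{V_t}$ pathwise, and an integrating factor followed by Minkowski's inequality in $L^2(\bbP)$ gives
\[
    \sqrt{\bbE[\norm{R_t}^2]} \le e^{-\omega_g t}\calW_2(\mu_0,\mu^\ast) + \norm{K}\int_0^t e^{-\omega_g(t-s)}\sqrt{\bbE[\norm{V_s}^2]}\,\rmd s.
\]
Since $\calW_2(\mu_t, \mu^\ast) \le \sqrt{\bbE[\norm{R_t}^2]}$, the remaining task is a uniform bound on $\sqrt{\bbE[\norm{V_s}^2]}$.

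The SDE for $V_t$ is obtained from Itô's formula applied to $V_t = Y_t - F(X_t)$, using $f \in C^3$. Its drift combines $-\lambda[\nabla f^\ast(Y_t) - KX_t]$, which by $\omega_{f^\ast}$-strong convexity of $f^\ast$ and $\nabla f^\ast(F(X_t)) = KX_t$ produces a contraction rate $-\lambda\omega_{f^\ast}\norm{V_t}^2$, with the remaining drift $\nabla F(X_t)[\nabla g(X_t) + K^T\nabla f(KX_t)] + \nabla F(X_t)K^T V_t - \Delta F(X_t)$; its Itô correction is $2\bbE[\norm{\nabla F(X_t)}_F^2]$. Transferring these derivative-moment expectations from the law of $X_t$ to $\mu^\ast$ via the coupling (bounding the deficit through Lipschitz estimates and the control on $\bbE[\norm{R_t}^2]$) lets us replace them by $D_1, D_2, D_3$ up to higher-order corrections in $\lambda^{-1}$. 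Young's inequality then gives
\[
    \tfrac{\rmd}{\rmd t}\bbE[\norm{V_t}^2] \le -2\lambda\omega_{f^\ast}\bbE[\norm{V_t}^2] + 2 D_1 + \calO(\lambda^{-1}),
\]
and with $V_0 = 0$ Gronwall yields $\bbE[\norm{V_t}^2] \le D_1/(\lambda\omega_{f^\ast}) + \calO(\lambda^{-2})$ uniformly in $t$. Inserting into the $R_t$-bound gives assertion~(1); assertion~(2) follows by letting $t \to \infty$ and invoking $\mu_t \to \mu_\lambda$ from \cref{thm:convergence-pd-ct}.

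The main obstacle I anticipate is the transfer of the derivative-moment estimates from the law of $X_t$ (in the $V_t$ Itô calculation) to $\mu^\ast$ (where the $D_i$ are defined). This typically closes into a self-consistent system in $\bbE[\norm{R_t}^2]$ and $\bbE[\norm{V_t}^2]$, requiring Lipschitz estimates on $\nabla F$, $\Delta F$ and $\nabla F(\nabla g + K^T \nabla f \circ K)$, whose leading-order solution produces the $\lambda^{-1/2}$ scaling of $C_1(\lambda)$. The initial factor $(1 + \lambda^{-1}\omega_{f^\ast}\norm{K})^{1/2}$ in the theorem corresponds to moving between primal and joint primal-dual norms via the $\omega_{f^\ast}^{-1}$-Lipschitzness of $\nabla f$ (equivalent to $\omega_{f^\ast}$-strong convexity of $f^\ast$), which is also what necessitates the threshold $\omega_g/\omega_{f^\ast} < \lambda$ for the primal rate $\omega_g$ to survive as the slower mode in the $V_t$--$R_t$ Gronwall balance.
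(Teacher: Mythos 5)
Your coupling with a stationary overdamped Langevin trajectory is the same starting point as the paper, but your choice of dual gap $V_t := Y_t - \nabla f(KX_t)$ differs crucially from the paper's, which instead tracks the quantity $y - \nabla f(K\tilde x)$, i.e.\ the dual gap measured against the \emph{reference} trajectory $\tilde X_t$. This is not a cosmetic difference. Because $\tilde X_t \sim \mu^\ast$ for all $t$, every derivative-moment expectation that appears in the paper's computation (the $\norm{M(\tilde x)}_F^2$ Itô-type term and the $m(\tilde x)$, $M^T(\tilde x)(\tilde p + K^T\tilde y(\tilde x))$ terms) is automatically taken under $\mu^\ast$ and hence bounded directly by $D_1, D_2, D_3$ as given in the hypotheses — no ``transfer'' is ever needed. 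With your definition, the Itô correction $2\,\bbE\bigl[\norm{\nabla F(X_t)}_F^2\bigr]$ and the other higher-derivative moments live under $\Law(X_t)$, which is not $\mu^\ast$. The transfer you propose (``up to higher-order corrections in $\lambda^{-1}$'') does not close without assuming Lipschitz continuity of $\nabla F$, $\Delta F$ and $\nabla F(\nabla g + K^T\nabla f\circ K)$, hypotheses the theorem does not provide. As written, this is a genuine gap: the coupled Gronwall system you set up in $\bbE[\norm{R_t}^2]$, $\bbE[\norm{V_t}^2]$ cannot be solved with only the stated moment bounds.

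A second, smaller difference: the paper does one Gronwall on the weighted quantity $\bbE\bigl[\norm{x-\tilde x}^2 + \lambda^{-1}\norm{y - \nabla f(K\tilde x)}^2\bigr]$, choosing the dual weight $\lambda^{-1}$ so that the cross-terms $\inpro{K^T(y-\tilde y)}{x-\tilde x}$ and $-\lambda\inpro{K(x-\tilde x)}{y-\tilde y}_{\lambda^{-1}}$ cancel exactly, and then optimizes over a Young-inequality parameter $\delta$ to set the contraction rate to $\omega_g$. Your two-stage Minkowski-then-Gronwall route reproduces the mechanism but hides where the initial prefactor $(1+\lambda^{-1}\omega_{f^\ast}\norm{K})^{1/2}$ comes from: in the paper it drops out immediately from $\calT_{1,\lambda^{-1}}(\pi_0,\pi^\ast) \le (1+\lambda^{-1}\omega_{f^\ast}\norm{K})^{1/2}\calW_2(\mu_0,\mu^\ast)$ for a concentrated dual initialization, using $\nabla f = (\nabla f^\ast)^{-1}$ is $\omega_{f^\ast}^{-1}$-Lipschitz. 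If you redefine your dual gap against $\tilde X_t$ and work with the paper's weighted distance, your overall plan will close without any additional hypotheses.
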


\begin{proof}
Assume $(X_t,Y_t)$ with distribution $\pi_t$ solves \eqref{eq:ct-pd-diffusion}. Now consider a coupling with a process $\tilde X_t$ which is initialized as $\tilde X_0 \sim \mu^\ast$ and solves \eqref{eq:ld-artificialpd} with the same Brownian motion $W_t$ as in \eqref{eq:ct-pd-diffusion}. Since $\mu^\ast$ is the invariant distribution of \eqref{eq:ld-artificialpd}, it follows $\tilde X_t \sim \mu^\ast$ for all $t>0$. The joint measure $\Pi_t$ of $(X_t,Y_t,\tilde X_t)$ solves the Fokker-Planck equation
\begin{equation}\label{eq:fp_coupledlambda}
\begin{aligned}
     \frac{\partial}{\partial t} \Pi_t &= \nabla_x \cdot ( (p + K^T y) \Pi_t) + \lambda \nabla_y \cdot ( (q - Kx) \Pi_t) + \nabla_{\tilde x}\cdot ( (\tilde p + K^T \tilde y) \Pi_t) \\
     &\qquad +  (\nabla_x + \nabla_{\tilde x}) \cdot (\nabla_x + \nabla_{\tilde x})\Pi_t.
\end{aligned}
\end{equation}
where as before $p \in \partial g(x), q \in \partial f^\ast (y), \tilde p \in \partial g(\tilde x)$. With the artificial, concentrated dual variable $\tilde Y_t = \nabla f(K \tilde X_t)$, we denote the singular joint law of $(\tilde X_t,\tilde Y_t)$ by $\pi^\ast = (I,\nabla f \circ K)_{\#} \mu^\ast$.

We now consider the transport distance $\calT_{1,\lambda^{-1}}$ on $\calP(\calZ)$ in order to bound the distance between $\pi_t$ and $ \pi^\ast$, which due to the concentration of $\pi^\ast$ can be written as 
$$\calT_{1,\lambda^{-1}}(\pi_t,\pi^\ast) = \inf_{\Pi \in \Gamma(\pi,\mu^\ast)} \left( \bbE_{(x,y,\tilde x)\sim \Pi} \left[ d_{1,\lambda^{-1}}(x,y,\tilde x)^2 \right] \right)^{1/2}, $$
with an augmented distance function 
$$d_{1,\lambda^{-1}}(x,y,\tilde x) := \left(\norm{x-\tilde x}^2 + \norm{y-\nabla f(K\tilde x)}_{\lambda^{-1}}^2\right)^{1/2}.$$
By using the joint Fokker-Planck equation \eqref{eq:fp_coupledlambda} and the notations $\tilde y(\tilde x) \coloneqq \nabla f(K\tilde x)$ as well as $M \coloneqq \nabla \tilde y$ and $m \coloneqq \nabla \cdot \nabla \tilde y$, we obtain
\begin{align*}
    &\frac{1}2 \frac{\rmd}{\rmd t} \bbE_{(x,y,\tilde x) \sim \Pi_t} \left[ d_{1,\lambda^{-1}}(x,y,\tilde x)^2 \right]\\
    &= \frac12\int d_{1,\lambda^{-1}}(x,y,\tilde x)^2 \left(\nabla_x \cdot ( (p + K^T y) \Pi_t) + \lambda \nabla_y \cdot ( (q - Kx) \Pi_t) + \nabla_{\tilde x} \cdot ( (\tilde p + K^T \tilde y(\tilde x) ) \Pi_t)\right) \mathrm d (x,y,\tilde x) \\
    & \qquad+ \frac12\int d_{1,\lambda^{-1}}(x,y,\tilde x)^2 \left((\nabla_x + \nabla_{\tilde x}) \cdot (\nabla_x + \nabla_{\tilde x})\Pi_t\right) \rmd(x,y,\tilde x) \\
    &= - \int \left( (x-\tilde x) \cdot (p+K^T y) + (y- \tilde y(\tilde x)) \cdot (q - Kx) + (\tilde x - x) \cdot (\tilde p + K^T \tilde y(\tilde x)\right) \rmd \Pi_t \\
    &\qquad - \frac 1 \lambda \int \left(M(\tilde x) (\tilde y(\tilde x) - y)\right) \cdot (\tilde p + K^T \tilde y(\tilde x)) \rmd \Pi_t  - \frac 1 \lambda \int \left(M(\tilde x) (\tilde y(\tilde x) - y)\right) \cdot 
    (\nabla_x + \nabla_{\tilde x}) \Pi_t d(x,y,\tilde x) \\
    &= - \int (x-\tilde x) \cdot (p - \tilde p) \rmd \Pi_t -  \int (y-\tilde y(\tilde x)) \cdot (q - K\tilde x) \rmd \Pi_t\\
    &\qquad + \frac 1 \lambda \int \left[ (\tilde y(\tilde x) - y) \cdot \left(m(\tilde x) -M^T(\tilde x)(\tilde p + K^T \tilde y(\tilde x))\right) + \norm{M(\tilde x)}_F^2 \right] \rmd \Pi_t
\end{align*}
For the terms in the second to last line, we can use strong convexity of $g$ and $f^\ast$ to estimate
\begin{gather*}
    \int (x-\tilde x) \cdot (p - \tilde p) \rmd \Pi_t \ge \omega_g \bbE_{(x,y,\tilde x) \sim \Pi_t} \left[ \norm{x-\tilde x}^2 \right]\\
    \int (y-\tilde y(\tilde x)) \cdot (q - K \tilde x) \rmd \Pi_t \ge \lambda \omega_{f^\ast} \bbE_{(x,y,\tilde x) \sim \Pi_t} \left[ \norm{y-\tilde y(\tilde x)}_{\lambda^{-1}}^2 \right]
\end{gather*}
Since the $\tilde x$-marginal of $\Pi_t$ is the target $\mu^\ast$, by assumption we have 
$$\lambda^{-1} \int \norm{M(\tilde x)}_F^2 \rmd \Pi_t \le \lambda^{-1} D_1. $$
In a similar way, using the Cauchy-Schwarz inequality and Young's inequality, for any $\delta > 0$ we can bound the final term by
\begin{align*}
    \lambda^{-1} \int (\tilde y(\tilde x) - y) &\cdot \left(m(\tilde x) -M^T(\tilde x)(\tilde p + K^T \tilde y(\tilde x))\right) \rmd \Pi_t \\
    &\le \lambda^{-1} \int \norm{\tilde y(\tilde x) - y} \cdot \norm{m(\tilde x) -M^T(\tilde x)(\tilde p + K^T \tilde y(\tilde x))} \rmd \Pi_t\\
    &\le \frac{\delta}{2} \bbE_{(x,y,\tilde x) \sim \Pi_t} \left[ \norm{y-\tilde y(\tilde x)}_{\lambda^{-1}}^2 \right] + \frac{1}{2\delta\lambda} \bbE_{\tilde x \sim \mu^\ast} \left[ \norm{m(\tilde x) -M^T(\tilde x)(\tilde p + K^T \tilde y(\tilde x))}^2 \right]\\
    &\le \frac{\delta}{2} \bbE_{(x,y,\tilde x) \sim \Pi_t} \left[ \norm{y-\tilde y(\tilde x)}_{\lambda^{-1}}^2 \right] + \frac{ D_2 + D_3}{2\delta\lambda}.
\end{align*}
Denoting $ \phi(t) := \bbE_{(x,y,\tilde x) \sim \Pi_t} \left[ d_{1,\lambda^{-1}}(x,y,\tilde x)^2 \right]$, we therefore have
$$ \phi'(t) \le -2\omega \phi(t) + \lambda^{-1}(2 D_1 + (D_2 + D_3)/\delta), $$
where we denoted $\omega \coloneqq \omega(\delta) \coloneqq \min\{ \omega_g,\lambda \omega_{f^\ast} - \frac{\delta}{2}\}$. Using Gronwall's lemma, this implies
\begin{align*}
    \phi(t) &\le e^{-2\omega t} \phi(0) + \int_0^t e^{-2\omega(t-s)} \lambda^{-1}(2 D_1 + (D_2 + D_3)/\delta) \rmd s\\
    &= e^{-2\omega t} \phi(0) + \lambda^{-1}(2 D_1 + (D_2 + D_3)/\delta) \frac{1-e^{-2\omega t}}{2\omega}
\end{align*}
By the definition of $\calT_{1,\lambda^{-1}}$ and taking the infimum over all possible initializations $\Pi_0$ with $(x,y)$-marginal $\pi_0$ and $(\tilde x, \tilde y)$-marginal $\pi^\ast = (id,\nabla f \circ K)_{\#}\mu^\ast$ on the right side, we get
\begin{equation}\label{eq:proof-conv-to-ULA-intermediate-result}
    \calT_{1,\lambda^{-1}}^2(\pi_t,\pi^\ast) \le \phi(t) \le e^{-2\omega t} \calT_{1,\lambda^{-1}}^2(\pi_0,\pi^\ast) + \lambda^{-1}(2 D_1 + (D_2 + D_3)/\delta) \frac{1-e^{-2\omega t}}{2\omega}.
\end{equation}
Bearing in mind the dependency of $\omega$ on $\delta$, for $\lambda < \omega_g/\omega_{f^\ast}$ the bound on the right is minimized by $\delta = 2(\lambda \omega_{f^\ast}-\omega_g)$ for any finite $t$ as well as in the limit $t\to \infty$. This choice implies $\omega = \omega_g$ and we denote the resulting constant terms by
$$ C(\lambda) \coloneqq \left( \frac{D_1}{\lambda \omega_g} + \frac{D_2 + D_3 }{4 \omega_g (\lambda \omega_{f^\ast} - \omega_g)} \right)^{1/2} = (\lambda \omega_g/D_1)^{-1/2} + \calO(\lambda^{-3/2}). $$
The first desired inequality for $t>0$ follows by taking the square root of \eqref{eq:proof-conv-to-ULA-intermediate-result} and using that $\calW_2(\mu_t,\mu^\ast) \le \calT_{1,\lambda^{-1}}(\pi_t,\pi^\ast)$ and, assuming a concentrated initialization $Y_0 = \nabla f(KX_0)$, that 
$$\calT_{1,\lambda^{-1}}(\pi_0,\pi^\ast) \le (1+\lambda^{-1} \omega_{f^\ast} \norm{K})^{1/2} \calW_2(\mu_0,\mu^\ast).$$
Since $\pi_t$ converges to the stationary distribution $\pi_\lambda$, taking the limit $t \to \infty$ in \eqref{eq:proof-conv-to-ULA-intermediate-result} and using $\calW_2(\mu_\lambda, \mu^\ast) \le \calT_{1,\lambda^{-1}}(\pi_\lambda,\pi^\ast)$ proves the second statement.
\end{proof}

\subsection{A Bias-Corrected Modification of Primal-Dual Diffusion}\label{subsec:bias-corrected-diffusion}
In this section, we want to follow up on the negative result of \cref{thm:1d-gauss-no-homogeneous-correction}, where we saw that there is no way to construct a diffusion SDE with the primal-dual type drift coefficient and homogeneous diffusion that has the correct stationary distribution. Naturally, we can also consider modifications of the diffusion SDE with problem-dependent and/or non-homogeneous diffusion coefficient, so that the samples become (in continuous time) unbiased.

We aim to correct the Fokker-Planck equation \eqref{eq:fp-pd} by adding terms such that $\pi^\ast = (I,\nabla f \circ K)_{\#} \mu^\ast $ becomes the stationary solution of the PDE. Throughout this subsection, we will assume that $g \in C^1(\bbR^d)$ and $f \in C^3(\mathbb{R}^m)$, we will denote as before $M : \bbR^d \to \bbR^{d\times m}$ for $M(x)\coloneqq \nabla (\nabla f \circ K)^T(x) = K^T H_f (Kx)$. Instead of \eqref{eq:ct-pd-diffusion}, consider the modified stochastic differential equation
\begin{equation}\label{eq:ct-modified-pd-diffusion-joint-var}
    \rmd Z_t = - A_{\tup{mod}}(Z_t) \rmd t + B_{\tup{mod}}(Z_t) \rmd W_t,
\end{equation}
where the drift and diffusion coefficients are given by
\begin{align*}
    A_{\tup{mod}}(z) &\coloneqq%
    \begin{pmatrix}%
        \nabla g(x)+K^Ty\\ 
        \lambda (\nabla f^*(y) - Kx) + M(x)^T (\nabla g(x) + K^T \nabla f(Kx))
    \end{pmatrix}\\
    B_{\tup{mod}}(z) &\coloneqq \sqrt 2\, \nabla (id, \nabla f \circ K)(x) = \sqrt 2 \begin{pmatrix}
    I\\M(x)^T
\end{pmatrix}.
\end{align*}
Note that $B_{\tup{mod}}$ depends on $x$ and is therefore not a special case of $B_{\tup{gPD}}$. The drift $A_{\tup{mod}}$ contains the same terms as $A_{\tup{PD}}$, and an additional term $- M(x)^T \nabla \log \mu^\ast(x)$.

\begin{remark}
    As an intuitive explanation why \eqref{eq:ct-modified-pd-diffusion-joint-var} has the desired stationary distribution, note that we want the stationary solution to concentrate its mass in $\calZ = \bbR^{d+m}$ on the $d$-dimensional manifold $\calU$ described by $y = \nabla f(Kx)$. The tangent space to $\calU$ at any point $(x,\nabla f(Kx))$, denoted in the following by $\calT_x\calU$, is spanned by the columns of $\nabla (id, \nabla f \circ K) = (I,M(x))^T$. This coincides with the column space of the modified diffusion coefficient $\Sigma_{\tup{mod}} \coloneqq \frac{1}{2} B_{\tup{mod}} B_{\tup{mod}}^T$, hence locally on $\calU$, the diffusion is confined to the manifold. The term $\nabla f^\ast(y) - Kx$ in the drift vanishes upon concentration of the mass on $\calU$, the remaining terms are just the gradient of the negative log-density $- \nabla \log \mu^\ast(x) = \nabla g(x) + K^T\nabla f(Kx)$ in the primal variable $x$ mapped to the tangent space of $\calU$ by the map $(I,M(x))^T$. The drift hence does not transport mass away from $\calU$ and locally on $\calU$, \eqref{eq:ct-modified-pd-diffusion-joint-var} coincides with overdamped Langevin diffusion.
\end{remark}

In order to show formally that the partially singular distribution $\pi^\ast$ is a stationary solution of \eqref{eq:ct-modified-pd-diffusion-joint-var}, we have to consider the weak formulation of the corresponding Fokker-Planck equation, given by
\begin{equation}\label{eq:modified-weak-fp}
   \frac{\rmd}{\rmd t} \int \phi d\pi_t = \int \nabla_z^T \Sigma_{\tup{mod}}(z) \nabla_z \phi \rmd \pi_t - \int \nabla \phi \cdot A_{\tup{mod}}(z) \rmd \pi_t \quad \forall \phi\in C^\infty(\calZ).
\end{equation}

\begin{theorem}\label{thm:stationary-soln-of-modified-fp}
    The distribution $\pi^\ast \coloneqq (I, \nabla f \circ K)_{\#}\mu^* $ is a stationary solution of \eqref{eq:modified-weak-fp}.
\end{theorem}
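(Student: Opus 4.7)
The plan is to verify the weak stationarity identity
\[
\int_{\calZ} \nabla_z^T \Sigma_{\tup{mod}}(z) \nabla_z \phi \rmd \pi^\ast \;=\; \int_{\calZ} A_{\tup{mod}}(z) \cdot \nabla \phi \rmd \pi^\ast \qquad \forall\, \phi \in C^\infty(\calZ),
\]
by exploiting the pushforward structure $\pi^\ast = \iota_\# \mu^\ast$ with the embedding $\iota:\calX \to \calZ$, $\iota(x) = (x, \nabla f(Kx))$. Throughout, I read the second-order term $\nabla_z^T \Sigma(z) \nabla_z \phi$ in divergence form, i.e.\ as $\nabla_z \cdot (\Sigma(z) \nabla_z \phi)$, consistent with the conservative Fokker--Planck equation $\partial_t \pi = \nabla_z \cdot (A\pi + \Sigma \nabla_z \pi)$ used in \eqref{eq:fp-pd}. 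For a fixed test function $\phi$ I introduce the pullback $u \coloneqq \phi \circ \iota \in C^\infty(\calX)$, which by the chain rule and $\iota'(x) = (I, M(x)^T)^T$ satisfies $\nabla u(x) = \nabla_x \phi(\iota(x)) + M(x) \nabla_y \phi(\iota(x))$.

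First I would handle the drift term. On the image of $\iota$, Fenchel duality yields $\nabla f^\ast(\nabla f(Kx)) = Kx$, so the $\lambda$-scaled component of $A_{\tup{mod}}$ vanishes and $A_{\tup{mod}}(\iota(x)) = (\nabla h(x), M(x)^T \nabla h(x))^T$. Substituting and collapsing via the chain-rule identity above,
\[
A_{\tup{mod}}(\iota(x)) \cdot \nabla \phi(\iota(x)) = \nabla_x \phi \cdot \nabla h + \nabla_y \phi \cdot M^T \nabla h = (\nabla_x \phi + M \nabla_y \phi) \cdot \nabla h = \nabla u(x) \cdot \nabla h(x).
\]

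The technical core is the diffusion term. A block-wise computation using
\[
\Sigma_{\tup{mod}}(z) = \begin{pmatrix} I & M(x) \\ M(x)^T & M(x)^T M(x) \end{pmatrix}
\]
and the fact that $M$ depends only on $x$ produces
\[
\nabla_z \cdot (\Sigma_{\tup{mod}} \nabla \phi) = \Delta_x \phi + 2 \sum_{i,k} M_{ik} \partial_{x_i y_k}^2 \phi + (\nabla_x \cdot M) \cdot \nabla_y \phi + (M^T M):\nabla_y^2 \phi.
\]
The decisive identity is $(\nabla_x \cdot M)_k = \sum_i \partial_{x_i} M_{ik} = m_k$, which holds because by definition $M(x)^T$ is the Jacobian of $\nabla f \circ K$, so the divergence of the $k$-th column of $M$ equals the Laplacian of the $k$-th component of $\nabla f \circ K$. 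A parallel double application of the chain rule shows that $\Delta_x u(x)$ unfolds into the identical four terms, yielding
\[
\bigl(\nabla_z \cdot (\Sigma_{\tup{mod}} \nabla \phi)\bigr)(\iota(x)) = \Delta_x u(x).
\]

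Assembling both evaluations and transporting integrals from $\calZ$ to $\calX$ via the pushforward reduces the stationarity to
\[
\int_{\calX} \bigl( \Delta u(x) - \nabla u(x) \cdot \nabla h(x) \bigr) \rmd \mu^\ast(x) = 0,
\]
which is immediate by integration by parts, since $\nabla \mu^\ast = -\mu^\ast \nabla h$. Put differently, the pullback of the modified operator under $\iota$ is precisely the overdamped Langevin generator for $\mu^\ast$ on $\calX$, whose invariance of $\mu^\ast$ is classical.

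The main obstacle is the bookkeeping in the diffusion calculation: the $x$-dependence of $M$ produces an extra term $m \cdot \nabla_y \phi$ that at first glance destroys the cancellation. The essential point is that exactly the same $m$-term arises when $\eta = \nabla f \circ K$ is differentiated twice via the chain rule in $\Delta_x u$; matching these two contributions is what makes the projected generator collapse to the intrinsic Laplacian on $\calU$ and reveals the correction in $A_{\tup{mod}}$ to be the one that makes $\pi^\ast$ stationary.
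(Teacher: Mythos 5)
Your proof is correct, and it takes a genuinely different route from the paper. The paper verifies stationarity by brute force: it expands the weak formulation \eqref{eq:modified-weak-fp} into its component integrals, performs integration by parts in the primal variable, applies a bookkeeping chain-rule identity (\cref{lem:weird-chain-rule}) to each drift term, and then exhibits a term-by-term cancellation against the blocks of the diffusion matrix. Your argument instead exploits the pushforward structure $\pi^\ast = \iota_\# \mu^\ast$ directly: you pull the test function back to $u = \phi \circ \iota$ and show that the modified generator pulls back under $\iota$ to the overdamped Langevin generator $\Delta_x - \nabla h \cdot \nabla_x$ on $\calX$, after which stationarity of $\mu^\ast$ is a one-line integration by parts. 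The two key computations you perform — the collapse of $A_{\tup{mod}}(\iota(x)) \cdot \nabla\phi$ to $\nabla u \cdot \nabla h$, and the identification of $\nabla_z \cdot (\Sigma_{\tup{mod}} \nabla_z \phi)$ restricted to the manifold with $\Delta_x u$ — are each elementary chain-rule expansions, and I verified that the double chain-rule expansion of $\Delta_x u$ reproduces exactly the four terms $\Delta_x \phi + 2M:D^2_{xy}\phi + m\cdot\nabla_y\phi + (M^T M):D^2_y\phi$, with the crucial $m \cdot \nabla_y\phi$ term arising from $\Delta_x(\nabla f \circ K)$ on both sides. What your approach buys is a structural explanation rather than a verification: it makes visible that the drift and diffusion corrections in $A_{\tup{mod}}, B_{\tup{mod}}$ are precisely those needed so that the restriction of the generator to the graph $y = \nabla f(Kx)$ is the intrinsic Langevin generator of $\mu^\ast$, which is the remark the paper gives only as an informal heuristic after stating the theorem. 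The paper's proof is more self-contained at the cost of obscuring this mechanism in a cascade of trace identities. Both rely on the same implicit assumption that $(\nabla f)^{-1} = \nabla f^\ast$, already in force under the subsection's regularity hypotheses on $f$.
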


In principle, this theorem might suggest that a time discretization of the modified \eqref{eq:ct-modified-pd-diffusion-joint-var} may be a better idea than the primal-dual sampling algorithm \eqref{eq:pd_sampling_update}. Note, however, apart from the higher-order derivative terms in $f$ (which counter the motivation to use a primal-dual scheme in the first place), the term $A_\tup{mod}$ contains just the gradient of the target log-density, i.e. we explicitly add overdamped Langevin diffusion in the dual variable. This suggests that, algorithmically, the dualization is somewhat redundant and any sampler based on \eqref{eq:ct-modified-pd-diffusion-joint-var} would be inferior to a standard unadjusted Langevin algorithm. 

In order to prove \cref{thm:stationary-soln-of-modified-fp}, we need the following vector calculus identity.
\begin{lemma}\label{lem:weird-chain-rule}
    Let $\psi:  \bbR^{d} \times \bbR^{m} \rightarrow  \bbR^{n}$, $\eta: \bbR^{d} \rightarrow \bbR^{m}$ and $S: \mathbb{R}^{d} \rightarrow \mathbb{R}^{d \times n}$, each with $C^1$ entries. Define $r:\bbR^d \to \bbR^n$ by $r_j(x) \coloneqq \sum_{i} \frac{\partial S_{ij}}{\partial x_i}(x) $ and denote the partial Jacobians of $\psi$ by $D_x\psi : \bbR^d\times \bbR^m \to \bbR^{n\times d}$ with $(D_x\psi)_{i,j}(x,y) \coloneqq \frac{\partial \psi_i}{\partial x_j}(x,y)$ and $D_y\psi : \bbR^d\times \bbR^m \to \bbR^{n\times m}$ where $(D_y\psi)_{i,k}(x,y) \coloneqq \frac{\partial \psi_i}{\partial y_k}(x,y)$. Then for all $x \in \bbR^d$ it holds
    \begin{equation}\label{eq:weird_chain_rule}
        \nabla_x \cdot ( S(x) \psi(x, \eta(x))) = r(x) \cdot \psi(x, \eta(x)) + \tr(S(x) D_x \psi(x,\eta(x)) + \tr(S(x) D_y\psi(x,\eta(x)) D_x \eta(x)).
    \end{equation}
\end{lemma}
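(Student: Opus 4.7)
The identity is a straightforward consequence of the product rule and chain rule applied componentwise, so the plan is essentially to grind out the indices carefully while being vigilant about which indices get summed. The whole calculation is elementary; the only ``content'' is recognizing the right sums as traces at the end.

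First I would write the $i$-th component of the vector field $S(x)\psi(x,\eta(x)) \in \bbR^d$ as $\sum_j S_{ij}(x)\psi_j(x,\eta(x))$ and then take the divergence in $x$, obtaining
\begin{equation*}
    \nabla_x\cdot\bigl(S(x)\psi(x,\eta(x))\bigr) = \sum_{i=1}^d \sum_{j=1}^n \frac{\partial}{\partial x_i}\Bigl(S_{ij}(x)\,\psi_j(x,\eta(x))\Bigr).
\end{equation*}
Applying the product rule splits this into two pieces. The first piece is $\sum_{i,j} \bigl(\partial_{x_i} S_{ij}\bigr)\psi_j$, which by the definition $r_j(x) = \sum_i \partial_{x_i}S_{ij}(x)$ collapses to $\sum_j r_j(x)\,\psi_j(x,\eta(x)) = r(x)\cdot\psi(x,\eta(x))$. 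That already accounts for the first term in the claimed formula.

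For the second piece $\sum_{i,j} S_{ij}(x)\, \partial_{x_i}\bigl[\psi_j(x,\eta(x))\bigr]$, I would apply the chain rule to $\psi_j(x,\eta(x))$, writing
\begin{equation*}
    \frac{\partial}{\partial x_i}\psi_j(x,\eta(x)) = (D_x\psi)_{j,i}(x,\eta(x)) + \sum_{k=1}^m (D_y\psi)_{j,k}(x,\eta(x))\,\frac{\partial \eta_k}{\partial x_i}(x).
\end{equation*}
Plugging this back in gives two double sums. The first is $\sum_{i,j} S_{ij}(D_x\psi)_{j,i} = \sum_i (S\,D_x\psi)_{i,i} = \tr(S(x)\,D_x\psi(x,\eta(x)))$. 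The second is $\sum_{i,j,k} S_{ij}(D_y\psi)_{j,k}(D_x\eta)_{k,i} = \sum_i (S\,D_y\psi\,D_x\eta)_{i,i} = \tr(S(x)\,D_y\psi(x,\eta(x))\,D_x\eta(x))$, which is exactly the third term in \eqref{eq:weird_chain_rule}.

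There is no genuine obstacle here; the one place where a slip is most likely is the matrix dimension bookkeeping when recognizing the sums as traces, so I would double-check that $S\in\bbR^{d\times n}$, $D_x\psi\in\bbR^{n\times d}$ and $D_y\psi\,D_x\eta\in\bbR^{n\times d}$ all produce square $d\times d$ matrices whose diagonal sums give exactly the double sums above. Collecting the three resulting terms yields \eqref{eq:weird_chain_rule} and completes the proof.
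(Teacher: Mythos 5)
Your proof is correct and follows essentially the same route as the paper's: expand the divergence componentwise, apply the product rule to split off the $r\cdot\psi$ term, apply the chain rule to the remaining piece, and recognize the resulting double and triple sums as traces. The paper's computation is just the condensed version of yours (and in fact has a minor typo, summing $j$ up to $d$ instead of $n$, which your write-up gets right).
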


\begin{proof}
    The proof is a straightforward application of product and chain rule.
    \begin{align*}
    \nabla_x \cdot (& S(x) \psi(x, \eta(x))) \\
    &= \sum_{i = 1}^{d} \frac{\partial}{\partial x_i} (\sum_{j=1}^{d} S_{ij}(x) \psi_j(x, \eta(x)))\\
    &=\sum_{ij} \frac{\partial S_{ij}}{\partial x_i}(x) \psi_j(x ,\eta(x)) 
    + S_{ij}(x) \frac{\partial \psi_j}{\partial x_i} (x, \eta(x))
    + S_{ij}(x) \sum_{k=1}^m \frac{\partial \psi_j}{\partial y_k}(x,\eta(x)) \frac{\partial h_k}{\partial x_i}(x)\\
    &= r(x) \cdot \psi(x, \eta(x)) + \tr(S(x) D_x \psi(x,\eta(x)) + \tr(S(x) D_y\psi(x,\eta(x)) D_x \eta(x)).
    \end{align*}
\end{proof}

\begin{proof}[Proof of \cref{thm:stationary-soln-of-modified-fp}]
    It suffices to check that the right side of \eqref{eq:modified-weak-fp} evaluates to zero for any $\phi \in C^\infty(\bbR^d \times \bbR^d)$ when we insert $\pi^\ast$:
    \begin{subequations}
    \begin{align}
        \int%
        \begin{pmatrix} \nabla_x \\ \nabla_y \end{pmatrix}^T%
        \begin{pmatrix}I & M(x)\\ M(x)^T & M(x)M(x)^T \end{pmatrix}%
        \begin{pmatrix} \nabla_x \\ \nabla_y \end{pmatrix} %
        \phi \rmd \pi^\ast%
        -%
        \int \nabla_x \phi \cdot (\nabla g(x)+K^Ty) \rmd \pi^\ast&%
        \label{eq:long-expression-proof-modified-fp-line1}\\%
        -%
        \int \nabla_y \phi \cdot (\lambda \nabla f^*(y) - \lambda K x) \rmd \pi^\ast%
        -%
        \int \nabla_y \phi \cdot M(x)^T (\nabla g(x) + K^T \nabla f(Kx)) \rmd \pi^\ast&%
        \label{eq:long-expression-proof-modified-fp-line2}%
    \end{align}
    \label{eq:long-expression-proof-modified-fp}
    \end{subequations}
    Using $(\nabla f)^{-1} = \nabla f^\ast$ and the fact that $\pi^\ast$ is partially concentrated on $y = \nabla f (Kx)$, it follows that
    $$ \int \nabla_y \phi \cdot (\lambda \nabla f^\ast(y) - \lambda Kx) \rmd \pi^\ast = 0. $$
    Using partial integration and \cref{lem:weird-chain-rule} with $S\equiv I$, $\psi = \nabla_x\phi$ and $\eta = \nabla f \circ K$, the second integral in \eqref{eq:long-expression-proof-modified-fp-line1} can be rewritten as 
    \begin{align*}
        - \int \nabla_x &\phi \cdot (\nabla g(x)+K^Ty) \rmd \pi^\ast\\
        & = \int \nabla_x \phi(x,\nabla f(Kx)) \cdot (- \nabla g(x) - K^T\nabla f(Kx)) \exp(-f(Kx)-g(x)) \rmd x\\
        & = \int \nabla_x \phi(x,\nabla f(Kx)) \cdot \nabla_x (\exp(-f(Kx)-g(x))) \rmd x\\
        & = - \int \nabla_x \cdot (\nabla_x \phi(x,\nabla f(Kx))) \rmd \mu^\ast(x)\\
        & = - \int \tr(D_x \nabla_x \phi(x,\nabla f (Kx))) \rmd \mu^\ast(x) - \int \tr(D_y \nabla_x \phi(x,\nabla f (Kx)) M(x)^T) \rmd \mu^\ast(x)\\
        & = - \int \Delta_x \phi(x,\nabla f (Kx)) \rmd \mu^\ast(x) - \int \nabla_y \cdot (M(x)^T \nabla_x \phi(x,\nabla f (Kx))) \rmd \mu^\ast(x)
    \end{align*}
    where we used that $M(x)^T = D_x \eta(x)$ for $\eta = \nabla f \circ K$. These two terms cancel with the terms from the first column of the diffusion matrix in \eqref{eq:long-expression-proof-modified-fp-line1}. The second integral in \eqref{eq:long-expression-proof-modified-fp-line2} can also be reformulated using partial integration and then \cref{lem:weird-chain-rule}, this time with $S = M$, $\psi = \nabla_y \phi$ and $\eta = \nabla f \circ K$. In the notation of \cref{lem:weird-chain-rule}, we get $r:\bbR^d \to \bbR^m$ with $r_j(x) \coloneqq \sum_{i} \frac{\partial M_{ij}}{\partial x_i}(x) $ and obtain
    \begin{align*}
        - \int &\nabla_y \phi(x,y) \cdot M(x)^T (\nabla g(x) + K^T \nabla f(Kx)) \rmd \pi^\ast\\
        & = \int \nabla_y \phi(x,\nabla f(Kx)) \cdot M(x)^T (- \nabla g(x) - K^T \nabla f(Kx)) \exp(-f(Kx)-g(x)) \rmd x\\
        & = \int \nabla_y \phi(x,\nabla f(Kx)) \cdot M(x)^T \nabla_x (\exp(-f(Kx)-g(x))) \rmd x\\
        & = - \int \nabla_x \cdot (M(x) \nabla_y \phi(x,\nabla f(Kx))) \rmd \mu^\ast(x)\\
        & = - \int r(x) \cdot \nabla_y \phi(x,\nabla f(Kx)) \rmd \mu^\ast(x) - \int \tr(M(x) D_x \nabla_y \phi(x,\nabla f (Kx))) \rmd \mu^\ast(x) \\
        &\qquad\qquad - \int \tr(M(x) D_y \nabla_y \phi(x,\nabla f (Kx)) M(x)^T) \rmd \mu^\ast(x)
    \end{align*}
    Since $ \tr(M D_y\nabla_y \phi M^T) = \tr(M^T M D_y\nabla_y \phi) = \nabla_y \cdot (M^TM \nabla_y \phi)$, the term in the last line cancels with the term from the lower right entry in the diffusion matrix. By the product rule, $\nabla_x \cdot (M \nabla_y \phi) = \tr(M D_x\nabla_y\phi) + r \cdot \nabla_y \phi $, hence the second to last line cancels with the term from the upper right entry in the diffusion matrix. It follows that \eqref{eq:long-expression-proof-modified-fp} is equal to zero.
\end{proof}

\begin{remark}
    While the the last result showed that the target is now the invariant solution, the asymptotic stability of the new diffusion process is unclear. In general, it is hard to generalize a contraction result like \cref{lem:stability_general_dynamics_cont_time} to the case of non-homogeneous diffusion coefficients. In section 2 of \cite{Fournier2021}, this question is addressed in more detail for the heat equation with non-homogeneous diffusion. The authors show that it becomes possible to prove stability in a Wasserstein distance with a modified underlying metric, where the metric depends on the diffusion coefficient via an elliptic PDE.\\
    Since the modified equation \eqref{eq:ct-modified-pd-diffusion-joint-var} is of limited numerical interest, we only sketch some main ideas and omit a detailed proof. At every point $x \in \calX$ one can define an inner product $\langle u, v \rangle_x \coloneqq u^T V^{-1}(x) v$ where $V(x)=W(x)W(x)^T$ is symmetric positive definite. The first $d$ columns of $W$ consist of a basis of the tangent space $\calT_x\calU$ of $\calU$ at $x$, and the remaining $m$ columns span the normal space $\calN_x\calU$. We then need to equip $\calZ$ with the induced Riemannian metric
    $$ d_{\tup{mod}}(z,\tilde z) = \inf_{\zeta(0)=z,\,\zeta(1)=\tilde z} \int_0^1 \left( \dot\zeta(t)^T W^{-1}(\zeta(t)) \dot \zeta(t) \right)^{1/2} \rmd t. $$
    The difficulty then usually consists in finding a more convenient form of $d_{\tup{mod}}(z,\tilde z)$, which allows to integrate the Fokker-Planck equation against $d_{\tup{mod}}^2(z,\tilde z)$ in order to show contraction. We leave this to future work.
\end{remark}

{\color{gray}%
}

\begin{example}
    We can check that in the one-dimensional quadratic case that was considered as an illustrative counterexample in \cref{thm:1d-gauss-example,thm:1d-gauss-no-homogeneous-correction}, the modified equation now has the desired invariant distribution $\pi^\ast$. Furthermore, every solution $\pi_t$ of the Fokker-Planck equation converges to $\pi^\ast$ as $t\to \infty$. This can be seen as follows: We carry out the same computations as in the proof of \cref{thm:1d-gauss-example} to obtain the Fourier transform $p_t$ of the solution $\pi_t$, given by
    \begin{gather*}
        p_t(\zeta) = p_0(\exp(-tA_{\tup{mod}}^T)\zeta) \exp \left( - \frac{1}{2} \int_0^t (B_{\tup{mod}}^T \exp(-sA_{\tup{mod}}^T) \zeta)^2 \rmd s \right),\\
        A_{\tup{mod}} = \begin{pmatrix}
            c_g^{-1} & k\\
            -\lambda k + k c_f^{-1} c_g^{-1} + k^3 c_f^{-2} & \lambda c_f
        \end{pmatrix}, \quad
         B_{\tup{mod}} = \begin{pmatrix}
            1\\
            kc_f^{-1}
        \end{pmatrix},
    \end{gather*}
    This time, the quadratic form under the integral is not positive definite in $\zeta$ since $B_{\tup{mod}}$ is an eigenvector of $A_{\tup{mod}}$ with eigenvalue $v_h \coloneqq c_g^{-1} + k^2c_f^{-1}$. Instead, it holds
    $$\int_0^t (B_{\tup{mod}}^T \exp(-sA_{\tup{mod}}^T) \zeta)^2 \rmd s = v_h^{-1}\left(1-e^{-tv_h}\right) (B_{\tup{mod}}^T\zeta)^2. $$
    Taking the inverse Fourier transform shows that for every (possibly degenerate) initialization $\pi_0$ the solution $\pi_t$ converges to the invariant distribution, which is degenerated Gaussian with mean zero and covariance matrix $v_h^{-1} B_{\tup{mod}} B_{\tup{mod}}^T$. This is precisely the target distribution $\pi^\ast$.
\end{example}

\section{Convergence of the Discrete Time Algorithm}\label{sec:discrete-time}
We want to apply similar techniques as in \cref{sec:continuous-time} to the time discretized setting of the algorithm \eqref{eq:pd_sampling_update}. Indeed, couplings like the ones we employed in the continuous time setting above have been used before in the stability and convergence analysis of Langevin algorithms. We begin this section by recalling such a computation in the simple case of ULA as an instructive example, and then expand to the primal-dual algorithm.

For a convex functional $h : \calX \to \bbR \cup \{\infty \}$, points $x,\tilde x \in \calX$ and subgradients $p\in \partial h(x), \tilde p \in \partial h(\tilde x)$, we will use the symmetric Bregman distance $D_{h,\text{sym}}^{p,\tilde p}(x,\tilde x) = \inpro{x-\tilde x}{p - \tilde p}$. If $h$ is differentiable with unique subgradients, we will drop the reference to $p,\tilde p $ and just write $D_{h,\text{sym}}(x,\tilde x)$. Recall that, if $h$ is convex, $D_{h,\text{sym}}$ is always non-negative.

Suppose now that the potential $h : \bbR^d \to \bbR$ is differentiable, that $\nabla h$ is $L_{\nabla h}$-Lipschitz-continuous and $\omega_h$-strongly convex. We consider two coupled Markov chains $(X_n)_n, (\tilde X_n)_n$ defined by initial values $X_0 \sim \mu_0, \tilde X_0 \sim \tilde \mu_0$ and
\begin{equation*}
    \left\{\begin{array}{l}
    \xi^{n+1} \sim \mathrm{N}(0,I) \vspace{0.4em}\\
    X^{n+1} = X^n - \tau \nabla h (X_n) + \sqrt{2\tau}\xi^{n+1} \vspace{0.4em}\\
    \tilde X^{n+1} = \tilde X^n - \tau \nabla h (\tilde X^n) + \sqrt{2\tau}\xi^{n+1}
    \end{array}\right.
\end{equation*}
Then we can compute that (see \cite{Durmus2019}, Prop. 3 where this calculation has been carried out)
\begin{align*}
    \norm{X^{n+1}-\tilde X^{n+1}}^2 &= \norm{X^n - \tau \nabla h(X^n) - \tilde X^n + \tau \nabla h (\tilde X^n)}^2\\
    &= \norm{X^{n}-\tilde X^{n}}^2 - 2\tau D_{h,\textrm{sym}}(X^n,\tilde X^n) + \tau^2 \norm{\nabla h (X^n) - \nabla h (\tilde X^n)}^2.
\end{align*}
If $h$ is $\omega_h$-strongly convex, we can use
$$ D_{h,\textrm{sym}}(X^n,\tilde X^n) \ge \frac{\omega_h L_{\nabla h}}{\omega_h + L_{\nabla h}} \norm{X^n-\tilde X^n}^2 + \frac{1}{\omega_h + L_{\nabla h}}\norm{\nabla h(X^n) - \nabla h(\tilde X^n)}^2, $$
and we obtain
\begin{equation*}
    \norm{X^{n+1}-\tilde X^{n+1}}^2 \le \left(1-\tau \frac{2\omega_h L_{\nabla h}}{\omega_h + L_{\nabla h}}\right) \norm{X^n-\tilde X^n}^2 + \tau \left( \tau - \frac{2}{\omega_h+L_{\nabla h}} \right) \norm{\nabla h(X^n) - \nabla h(\tilde X^n)}^2.
\end{equation*}
If the step size is chosen to be $\tau = 2/(\omega_h+L_{\nabla h})$, we obtain the convergence rate
\begin{equation*}
    \norm{X^{n}-\tilde X^{n}}^2 \le \left( \frac{L_{\nabla h}-\omega_h}{L_{\nabla h}+\omega_h} \right)^{2n} \norm{X^0-\tilde X^0}^2.
\end{equation*}
Denoting the distribution of $X^n$ by $\mu_n$ and that of $\tilde X^n$ by $\tilde \mu_n$, we obtain by taking the infimum over all couplings of $\mu_0$ and $\tilde \mu_0$ on the right side that
\begin{equation*}
    \calW_2(\mu_n,\tilde \mu_n) \le \left( \frac{L_{\nabla h}-\omega_h}{L_{\nabla h}+\omega_h} \right)^{n} \calW_2(\mu_0,\tilde \mu_0).
\end{equation*}
This shows that ULA is stable and will converge to its unique stationary distribution. Similar computations can be carried out for other time-discretized schemes based on Langevin diffusion, e.g. time-implicit leading to proximal sampling schemes. Since the stationary distribution of the time-discrete scheme is not the target distribution $\mu^\ast$, the missing step to relate the samples drawn from the Markov chain and the target is to couple a time-discrete chain and a time-continuous process and then finding an error bound. Typically, this is more challenging and the introduced bias (in Wasserstein distance) between the two settings scales with a power of the step size $\tau$.

\subsection{Stability of Primal-Dual Langevin Sampling}\label{subsec:contraction-primal-dual-langevin}
We now turn to the primal-dual setting of \eqref{eq:pd_sampling_update}. Consider the following coupling in discrete time with initial values $(X^0,Y^0)\sim\pi_0$ and $(\tilde X^0, \tilde Y^0) \sim \tilde \pi_0$ and identical stochastic input to both iterations:
\begin{equation}\label{eq:coupling-ulpda}
    \left\{\begin{array}{l}
        \xi^{n+1} \sim \mathrm N(0,I)  \vspace{0.8em} \\
        Y^{n+1} = \prox_{\sigma f^\ast}(Y^n + \sigma K X_\theta^n) \\
        X^{n+1} = \prox_{\tau g}(X^n - \tau K^T Y^{n+1}) + \sqrt{2\tau}\xi^{n+1} \\
        X_\theta^{n+1} = X^{n+1} + \theta(X^{n+1} - X^n) \vspace{0.8em} \\
        \tilde Y^{n+1} = \prox_{\sigma f^\ast}(\tilde Y^n + \sigma K \tilde X_\theta^n) \\
        \tilde X^{n+1} = \prox_{\tau g}(\tilde X^n - \tau K^T \tilde Y^{n+1}) + \sqrt{2\tau} \xi^{n+1} \\
        \tilde X_\theta^{n+1} = \tilde X^{n+1} + \theta (\tilde X^{n+1} - \tilde X^n)
    \end{array}\right.
\end{equation}
By convention, we set $X^{-1} = X^0$ and $\tilde X^{-1} = \tilde X^0$. In the following, we denote the distribution of $(X^n,Y^n)$ by $\pi_n$ and the $X^n$-marginal by $\mu_n$. Denote further $L \coloneqq \norm{K}$ where $\norm{\cdot}$ is the operator norm induced by the Euclidean norm on $\calX$ and $\calY$. We assume that also in the discrete setting, the step sizes are coupled via the relation $\lambda = \sigma /\tau$.

We start by giving a stability result for the case where neither $g$ nor $f^\ast$ are strongly convex.
\begin{theorem}\label{thm:dt-stability-convex}
    Let $(X^n,Y^n)$ and $(\tilde X^n, \tilde Y^n)$ be given by the coupling \eqref{eq:coupling-ulpda} with relaxation parameter $\theta = 1$ and $\tau \sigma L^2 \le 1$. Then for the primal variable it holds
    $$ \calW_2(\mu_N, \tilde \mu_N) \le \calT_{1, \lambda^{-1}}(\pi_0, \tilde \pi_0), $$
    where $\calT_{1,\lambda^{-1}}$ is the transport distance over the joint primal-dual variable defined in \eqref{eq:transport-distance}. If $\tau\sigma L^2 < 1$, then the joint variable obeys 
    $$ \calT_{1, \lambda^{-1}}(\pi_N, \tilde \pi_N) \le C \calT_{1, \lambda^{-1}}(\pi_0, \tilde \pi_0), $$
    with the constant $ C= (1-\tau\sigma L^2)^{-1/2}$.
\end{theorem}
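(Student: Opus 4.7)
The plan is to carry out a Chambolle--Pock type energy estimate on the coupled chain \eqref{eq:coupling-ulpda}, exploiting the fact that because both copies use the same Gaussian increment $\xi^{n+1}$, the stochastic terms cancel in the differences $\Delta X^n \coloneqq X^n - \tilde X^n$ and $\Delta Y^n \coloneqq Y^n - \tilde Y^n$, reducing everything to a deterministic inequality that mirrors the analysis of the primal-dual hybrid gradient method. First I would write down the optimality conditions of the two proximal steps,
\begin{equation*}
    \tfrac{Y^n - Y^{n+1}}{\sigma} + K X_\theta^n \in \partial f^\ast(Y^{n+1}), \qquad \tfrac{X^n - (X^{n+1}-\sqrt{2\tau}\xi^{n+1})}{\tau} - K^T Y^{n+1} \in \partial g(X^{n+1}-\sqrt{2\tau}\xi^{n+1}),
\end{equation*}
subtract the two copies, pair with $\Delta Y^{n+1}$ and $\Delta X^{n+1}$ respectively, and use monotonicity of $\partial f^\ast$ and $\partial g$. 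Combining with the polarization identity $\inpro{a-b}{b} = \tfrac12(\norm{a}^2-\norm{b}^2-\norm{a-b}^2)$ and setting $e^n \coloneqq \tfrac{1}{2\tau}\norm{\Delta X^n}^2 + \tfrac{1}{2\sigma}\norm{\Delta Y^n}^2$ yields the per-step inequality
\begin{equation*}
    e^n - e^{n+1} \ge \tfrac{1}{2\tau}\norm{\delta^{n+1}}^2 + \tfrac{1}{2\sigma}\norm{\Delta Y^{n+1} - \Delta Y^n}^2 + \inpro{K(\Delta X^{n+1} - \Delta X_\theta^n)}{\Delta Y^{n+1}},
\end{equation*}
where $\delta^n \coloneqq \Delta X^n - \Delta X^{n-1}$ (with $\delta^0 = 0$ by the convention $X^{-1}=X^0$).

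Next I would deal with the cross term using $\theta = 1$, which gives $\Delta X^{n+1} - \Delta X_\theta^n = \delta^{n+1} - \delta^n$. Following Chambolle--Pock, I would introduce the augmented energy $\mathcal E^n \coloneqq e^n + \inpro{K\delta^n}{\Delta Y^n}$ and rewrite $\inpro{K\delta^n}{\Delta Y^{n+1}} = \inpro{K\delta^n}{\Delta Y^n} + \inpro{K\delta^n}{\Delta Y^{n+1} - \Delta Y^n}$ to obtain
\begin{equation*}
    \mathcal E^n - \mathcal E^{n+1} \ge \tfrac{1}{2\tau}\norm{\delta^{n+1}}^2 + \tfrac{1}{2\sigma}\norm{\Delta Y^{n+1} - \Delta Y^n}^2 - \inpro{K\delta^n}{\Delta Y^{n+1} - \Delta Y^n}.
\end{equation*}
Young's inequality $\abs{\inpro{K\delta^n}{\Delta Y^{n+1}-\Delta Y^n}} \le \tfrac{\sigma L^2}{2}\norm{\delta^n}^2 + \tfrac{1}{2\sigma}\norm{\Delta Y^{n+1} - \Delta Y^n}^2$ cancels the $Y$-increment terms and leaves
\begin{equation*}
    \mathcal E^n - \mathcal E^{n+1} \ge \tfrac{1}{2\tau}\norm{\delta^{n+1}}^2 - \tfrac{\tau\sigma L^2}{2\tau}\norm{\delta^n}^2.
\end{equation*}
Summing from $n = 0$ to $N-1$ and using $\delta^0 = 0$ gives $\mathcal E^0 - \mathcal E^N \ge \tfrac{1}{2\tau}\norm{\delta^N}^2 + \sum_{n=1}^{N-1}\tfrac{1-\tau\sigma L^2}{2\tau}\norm{\delta^n}^2$, which under $\tau\sigma L^2 \le 1$ gives $\mathcal E^N \le \mathcal E^0 = e^0$.

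To finish, I would lower-bound $\mathcal E^N$ in terms of $e^N$. Applying Young $\abs{\inpro{K\delta^N}{\Delta Y^N}} \le \tfrac{\sigma L^2}{2\beta}\norm{\delta^N}^2 + \tfrac{\beta}{2\sigma}\norm{\Delta Y^N}^2$ with the tuned choice $\beta = \tau\sigma L^2$ combines with the $\tfrac{1}{2\tau}\norm{\delta^N}^2$ slack from the telescoping to absorb the $\norm{\delta^N}^2$ terms and produces $\mathcal E^N \ge (1-\tau\sigma L^2)\,e^N$. This gives the key bound $e^N \le (1-\tau\sigma L^2)^{-1}\,e^0$; multiplying through by $2\tau$ recognises $2\tau\,e^n = \norm{\Delta X^n}^2 + \lambda^{-1}\norm{\Delta Y^n}^2$. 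Passing to expectations under an arbitrary coupling of $\pi_0, \tilde\pi_0$, then taking the infimum over couplings, yields the stated bound for $\calT_{1,\lambda^{-1}}$ with $C = (1-\tau\sigma L^2)^{-1/2}$. The first inequality (on the primal marginal alone) follows from the same chain of estimates by simply discarding the nonnegative $\tfrac{1-\tau\sigma L^2}{2\sigma}\norm{\Delta Y^N}^2$ contribution, which only requires $\tau\sigma L^2 \le 1$. The main technical hurdle is the careful handling of the cross term produced by the over-relaxation $\theta=1$: the telescoping via $\mathcal E^n$ and the precise Young scaling are what allow the $Y$-increments to cancel and the constant $\tau\sigma L^2$ (rather than a weaker $\sqrt{\tau\sigma L^2}$) to appear in the final contraction factor.
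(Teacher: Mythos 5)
Your proposal is correct and follows essentially the same route as the paper: subtract the two coupled copies so the Gaussian increments cancel, exploit monotonicity of $\partial g$ and $\partial f^\ast$ via the proximal optimality conditions, telescope a Chambolle--Pock style Lyapunov quantity built from $\tfrac{1}{2\tau}\norm{\Delta X^n}^2 + \tfrac{1}{2\sigma}\norm{\Delta Y^n}^2$ augmented by the cross term $\inpro{K\delta^n}{\Delta Y^n}$, and finish with Young's inequality (the paper bundles $\tfrac{1}{2\tau}\norm{\delta^n}^2$ into the telescoping quantity inside \cref{lem:primal-dual-ineq} while you track it separately, but the final inequality $\tfrac{1}{2\tau}\norm{\Delta X^N}^2 + \tfrac{1-\tau\sigma L^2}{2\sigma}\norm{\Delta Y^N}^2 \le e^0$ and both conclusions are identical). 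The only small omission is that you should confirm the final passage to Wasserstein/transport distances: after taking expectation, the left side already lower-bounds $\tfrac{1}{2\tau}\calW_2^2(\mu_N,\tilde\mu_N)$ or $\tfrac{1-\tau\sigma L^2}{2\tau}\calT_{1,\lambda^{-1}}^2(\pi_N,\tilde\pi_N)$ by definition, and only then one takes the infimum over initial couplings on the right — exactly as the paper writes it.
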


In the case of strongly convex $g$ and $f^\ast$, we can prove the following contraction result.
\begin{theorem}\label{thm:dt-contraction-strongly-convex}
    Assume that $g$ and $f^\ast$ are $\omega_g$- and $\omega_{f^\ast}$-strongly convex respectively. Let $(X^n,Y^n)$ and $(\tilde X^n, \tilde Y^n)$ be given by the coupling \eqref{eq:coupling-ulpda} with extrapolation parameter $\theta$ satisfying 
    $$\max\{(1+2\omega_g \tau)^{-1},(1+2\omega_{f^\ast}\sigma)^{-1}\} \le \theta < 1,$$ and further $\theta \sigma \tau L^2 \le 1$. Then with the constant $\tilde \lambda \coloneqq \frac{1+2\omega_g \tau}{1+2\omega_{f^\ast}\sigma} \, \lambda$, it holds
    $$\calW_2(\mu_N,\tilde \mu_N) \le \theta^{N/2} \calT_{1,\tilde \lambda^{-1}}(\pi_0,\tilde \pi_0). $$
    and if $\theta \sigma \tau L^2 < 1$, then with $ \tilde C \coloneqq (1-\theta \tau \sigma L^2)^{-1/2}$ we have for the joint variable
    $$\calT_{1,\tilde \lambda^{-1}}(\pi_N,\tilde \pi_N) \le \tilde C \theta^{N/2} \calT_{1,\tilde \lambda^{-1}}(\pi_0,\tilde \pi_0). $$
    It follows directly that in the latter case a stationary distribution $\pi_{\lambda,\tau}$ exists and the distribution $\pi_N$ of samples converges to $\pi_{\lambda,\tau}$ as $N \to \infty$.
\end{theorem}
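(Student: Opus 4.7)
The plan is to lift the coupling computation for ULA shown at the beginning of \cref{sec:discrete-time} to the joint primal-dual metric $\calT_{1,\tilde\lambda^{-1}}$. The central analytic tool is the strong-convexity enhanced firm nonexpansiveness of the resolvent: if $\phi$ is $\omega$-strongly convex, then for inputs $u,\tilde u$ and outputs $p=\prox_{\tau\phi}(u)$, $\tilde p = \prox_{\tau\phi}(\tilde u)$ one has $(1+2\omega\tau)\norm{p-\tilde p}^2 \le \norm{u-\tilde u}^2$. Applied to the two prox steps of \eqref{eq:coupling-ulpda}, together with the cancellation of the common noise $\sqrt{2\tau}\xi^{n+1}$ in the difference $\Delta_X^{n+1}:=X^{n+1}-\tilde X^{n+1}$, this reduces the whole analysis to a deterministic pathwise estimate. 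Writing $a:=(1+2\omega_g\tau)^{-1}$ and $b:=(1+2\omega_{f^\ast}\sigma)^{-1}$, it yields
\begin{align*}
\norm{\Delta_X^{n+1}}^2 &\le a\norm{\Delta_X^n-\tau K^T\Delta_Y^{n+1}}^2,\\
\norm{\Delta_Y^{n+1}}^2 &\le b\norm{\Delta_Y^n+\sigma K\Delta_{X_\theta}^n}^2.
\end{align*}

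Next I would take the weighted sum with dual weight $\tilde\lambda^{-1}$. The value $\tilde\lambda = \tfrac{1+2\omega_g\tau}{1+2\omega_{f^\ast}\sigma}\lambda$ is chosen precisely so that $a\tau = \tilde\lambda^{-1}b\sigma =: \beta$; this forces the mixed products $-2a\tau\inpro{K\Delta_X^n}{\Delta_Y^{n+1}}$ and $+2\tilde\lambda^{-1}b\sigma\inpro{\Delta_Y^n}{K\Delta_{X_\theta}^n}$, which appear after expanding the two squares, to carry a common prefactor. The assumption $\theta\ge\max\{a,b\}$ then provides just enough slack to replace both $a$ and $b$ by the single factor $\theta$ in front of the pure quadratic terms, producing a bound of the shape
$$\norm{\Delta_X^{n+1}}^2+\tilde\lambda^{-1}\norm{\Delta_Y^{n+1}}^2 \le \theta\bigl(\norm{\Delta_X^n}^2+\tilde\lambda^{-1}\norm{\Delta_Y^n}^2\bigr) + \beta\cdot(\text{cross and residual quadratic terms}).$$

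Substituting $\Delta_{X_\theta}^n = \Delta_X^n + \theta(\Delta_X^n-\Delta_X^{n-1})$ leaves (i) cross terms of the form $\inpro{K\Delta_X^n}{\Delta_Y^{n+1}}$ versus $\inpro{K\Delta_X^{n-1}}{\Delta_Y^n}$ that are telescoping across successive time steps, and (ii) residual quadratic terms $\beta\tau\norm{K^T\Delta_Y^{n+1}}^2 + \beta\sigma\norm{K\Delta_{X_\theta}^n}^2$ which can be controlled via $\norm{K}\le L$. Following the classical PDHG bookkeeping, I would absorb both contributions into a Lyapunov energy of the form
$$V_n := \norm{\Delta_X^n}^2 + \tilde\lambda^{-1}\norm{\Delta_Y^n}^2 - 2\tau\theta\inpro{K\Delta_X^{n-1}}{\Delta_Y^n} + \gamma\norm{\Delta_X^n-\Delta_X^{n-1}}^2,$$
with $\gamma>0$ chosen so that the residual quadratic terms cancel against $\theta V_n$. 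The step-size condition $\theta\sigma\tau L^2\le 1$ is exactly what is needed to keep the effective coefficient of $\norm{\Delta_X^n}^2$ inside $V_n$ nonnegative, yielding the pathwise contraction $V_{n+1}\le\theta V_n$ and by iteration $V_N\le\theta^N V_0$.

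For the joint bound, one then shows that $V_n$ is equivalent to $\norm{\Delta_X^n}^2+\tilde\lambda^{-1}\norm{\Delta_Y^n}^2$; the constant of equivalence comes out proportional to $(1-\theta\sigma\tau L^2)^{-1}$, which forces the strict inequality and yields the factor $\tilde C$ in the statement. For the primal bound, only the weaker inequality $\norm{\Delta_X^N}^2\le V_N$ is needed, which goes through also in the borderline case $\theta\sigma\tau L^2=1$ provided $\gamma$ is chosen to complete a square in $(\Delta_X^n,\Delta_X^{n-1})$. Taking expectations and then infima over couplings of $\pi_0,\tilde\pi_0$ on the right-hand side converts the pathwise estimate into the stated inequalities in $\calW_2$ and $\calT_{1,\tilde\lambda^{-1}}$. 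Existence of $\pi_{\lambda,\tau}$ and convergence $\pi_N\to\pi_{\lambda,\tau}$ follow from Banach's fixed point theorem applied to the $N$-step Markov kernel on the complete metric space $(\calP_2(\calZ),\calT_{1,\tilde\lambda^{-1}})$, since $\tilde C\theta^{N/2}<1$ for $N$ sufficiently large. The main obstacle throughout is the cross-term bookkeeping: a consistent choice of $\tilde\lambda$, of $\gamma$, and of how much slack to spend on $a,b\le\theta$ is required in order to land on the exact rate $\theta$ rather than on some weaker one.
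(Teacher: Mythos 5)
Your high-level plan — pathwise coupling, a Lyapunov energy with a cross term in $K$, a per-step contraction $V_{n+1}\le\theta V_n$, and finally passing to an infimum over initial couplings — matches the paper's proof (which routes through \cref{lem:primal-dual-ineq}). The weights are also right: your condition $a\tau=\tilde\lambda^{-1}b\sigma$ is equivalent to $\tilde\lambda^{-1}=(\sigma^{-1}+2\omega_{f^\ast})/(\tau^{-1}+2\omega_g)$, which is exactly the weight implicit in the paper's final estimate $\norm{\cdot}^2_{\tau^{-1}+2\omega_g}+\norm{\cdot}^2_{\sigma^{-1}+2\omega_{f^\ast}}$.

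There is, however, a concrete gap in the key per-step inequality. You invoke only the weakened resolvent bound $(1+2\omega\tau)\norm{p-\tilde p}^2\le\norm{u-\tilde u}^2$, discarding the second term in the firm nonexpansiveness refinement
\begin{equation*}
(1+2\omega\tau)\,\norm{p-\tilde p}^2 \;+\; \norm{(u-\tilde u)-(p-\tilde p)}^2 \;\le\; \norm{u-\tilde u}^2.
\end{equation*}
That discarded term is not slack — it is essential. Applied to the primal step with $u-\tilde u=\Delta_X^n-\tau K^T\Delta_Y^{n+1}$ and $p-\tilde p=\Delta_X^{n+1}$, expanding \emph{both} sides makes the residual $\tau\norm{K^T\Delta_Y^{n+1}}^2$ cancel and yields, after dividing by $\tau$,
\begin{equation*}
\norm{\Delta_X^{n+1}}^2_{\tau^{-1}+2\omega_g}+\norm{\Delta_X^{n+1}-\Delta_X^n}^2_{\tau^{-1}}+2\inpro{K\Delta_X^{n+1}}{\Delta_Y^{n+1}}\le\norm{\Delta_X^n}^2_{\tau^{-1}},
\end{equation*}
which is precisely the primal half of the identity derived in \cref{lem:primal-dual-ineq}. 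Dropping the squared-increment term, as you do, leaves instead a \emph{positive} residual $\beta\tau\norm{K^T\Delta_Y^{n+1}}^2$ on the right and, crucially, a cross term $\inpro{K\Delta_X^n}{\Delta_Y^{n+1}}$ whose primal index is shifted by one relative to the dual. With that misalignment the decomposition of $\inpro{K\Delta_{X_\theta}^n}{\Delta_Y^n}$ does not pair up with the primal cross term to produce a clean telescope at rate $\theta$: after writing $\Delta_{X_\theta}^n=(1+\theta)\Delta_X^n-\theta\Delta_X^{n-1}$, a same-time product $\inpro{K\Delta_X^n}{\Delta_Y^n}$ is left over, and the only tool to control it (Young's inequality) spends precisely the strong-convexity slack that was supposed to give the rate. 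Your proposed $\gamma\norm{\Delta_X^n-\Delta_X^{n-1}}^2$ in $V_n$ cannot repair this, because the weakened resolvent bound produces no matching $\norm{\Delta_X^{n+1}-\Delta_X^n}^2$ on the left to close the recursion. Once you keep the second term in the firm nonexpansiveness (or equivalently start, as the paper does, from the monotonicity of $\partial g$ and $\partial f^\ast$ written in the subgradients that the prox updates expose), the cross term becomes $2\inpro{K\Delta_X^{n+1}}{\Delta_Y^{n+1}}$, the squared increments reappear on the correct side, and the rest of your bookkeeping — the $\theta\ge\max\{a,b\}$ condition, the cross-term telescoping against the overrelaxation, the Young-inequality step with $\theta\sigma\tau L^2\le 1$ that yields the factor $\tilde C$, and the Banach fixed-point argument for $\pi_{\lambda,\tau}$ — lines up with the paper's proof.
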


In order to prove \cref{thm:dt-stability-convex,thm:dt-contraction-strongly-convex}, we need the following auxiliary inequality, which has been used several times in similar forms for primal-dual optimization algorithms, see e.g. \cite{Chambolle2011}. It still holds in the present sampling setting, since considering the difference of two coupled processes makes the stochastic terms $\xi^n$ cancel out:

\begin{lemma}\label{lem:primal-dual-ineq}
    Let $(X^n,Y^n)$ and $(\tilde X^n, \tilde Y^n)$, $n \ge 0$ be given by the coupling \eqref{eq:coupling-ulpda} with extrapolation parameter $\theta$ chosen such that $\theta \sigma \tau L^2 \le 1$. By definition of the proximal mapping, we have for $n \ge 0$:
     \begin{equation}\label{eq:subgradients-ulpda}
         \begin{array}{l}
            p^{n+1} \coloneqq (X^n-X^{n+1}+\sqrt{2\tau}\xi^{n+1})/\tau - K^T Y^{n+1}, \in \partial g(X^{n+1}-\sqrt{2\tau} \xi^{n+1}) \vspace{0.5em}\\
            q^{n+1} \coloneqq (Y^n-Y^{n+1})/\sigma + K X_\theta^n \in \partial f^\ast(Y^{n+1}).
        \end{array}
     \end{equation}
     Analogously, we define $\tilde p^{n+1}, \tilde q^{n+1}$ for the second chain $(\tilde X^n,\tilde Y^n)$. Then it holds
    \begin{align*}
        &2 \left(D_{g,\text{sym}}^{p^{n+1},\tilde p^{n+1}}(X^{n+1}- \sqrt{2\tau}\xi^{n+1}, \tilde X^{n+1}- \sqrt{2\tau}\xi^{n+1}) + D_{f^\ast,\text{sym}}^{q^{n+1},\tilde q^{n+1}}(Y^{n+1}, \tilde Y^{n+1})\right)\\ 
        &\quad + \norm{X^{n+1}-\tilde X^{n+1}}_{\tau^{-1}}^2 +\norm{X^{n+1}-\tilde X^{n+1}-X^n+\tilde X^n}_{\tau^{-1}}^2 + \norm{Y^{n+1}-\tilde Y^{n+1}}_{\sigma^{-1}}^2 \\
        &\quad + 2\inpro{K(X^{n+1}-\tilde X^{n+1}-X^n+\tilde X^n)}{Y^{n+1}-\tilde Y^{n+1}}\\
        &\le \norm{X^n-\tilde X^n}_{\tau^{-1}}^2 + \theta \norm{X^n-\tilde X^n-X^{n-1}+\tilde X^{n-1}}_{\tau^{-1}}^2 + \norm{Y^n - \tilde Y^n}_{\sigma^{-1}}^2\\
        &\quad + 2\theta \inpro{K(X^n-\tilde X^n - X^{n-1}+\tilde X^{n-1})}{Y^n-\tilde Y^n}.
    \end{align*}
\end{lemma}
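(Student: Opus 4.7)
The plan is to proceed in direct analogy with the classical primal-dual convergence inequality (as in \cite{Chambolle2011}), exploiting the crucial observation that the noise term $\sqrt{2\tau}\xi^{n+1}$ is identical in both chains and therefore cancels in every difference of coupled iterates. First, from the definition of $p^{n+1}$ and $q^{n+1}$ in \eqref{eq:subgradients-ulpda}, I would rewrite the update rules of both chains as $X^{n+1} - \sqrt{2\tau}\xi^{n+1} = X^n - \tau K^T Y^{n+1} - \tau p^{n+1}$ and $Y^{n+1} = Y^n + \sigma K X_\theta^n - \sigma q^{n+1}$, do the same for the tilded chain, and subtract, writing $\Delta_X^n \coloneqq X^n - \tilde X^n$, $\Delta_Y^n \coloneqq Y^n - \tilde Y^n$, $\Delta_p^{n+1} \coloneqq p^{n+1} - \tilde p^{n+1}$, $\Delta_q^{n+1} \coloneqq q^{n+1} - \tilde q^{n+1}$. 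The stochastic terms drop out, leaving purely deterministic identities.

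Next, I would take the inner product of the primal difference identity with $\Delta_X^{n+1}$ and of the dual difference identity with $\Delta_Y^{n+1}$, applying in each case the polarization identity $2\inpro{a}{b} = \norm{a}^2 + \norm{b}^2 - \norm{a-b}^2$. Rescaling by $1/\tau$ and $1/\sigma$ respectively and adding the two resulting equalities produces an identity in which the Bregman terms appear as $2\inpro{\Delta_p^{n+1}}{\Delta_X^{n+1}} = 2 D_{g,\text{sym}}^{p^{n+1},\tilde p^{n+1}}(\cdot,\cdot)$ (the shift by $\sqrt{2\tau}\xi^{n+1}$ in the two arguments cancels inside the Bregman distance) and analogously for $f^\ast$. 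The identity so obtained already contains essentially every term required, plus two extra pieces: the term $\norm{\Delta_Y^{n+1} - \Delta_Y^n}_{\sigma^{-1}}^2$ on the left side and the cross-term $2\inpro{K\Delta_{X_\theta}^n}{\Delta_Y^{n+1}}$ on the right side coming from the extrapolation.

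I would then expand $\Delta_{X_\theta}^n = \Delta_X^n + \theta(\Delta_X^n - \Delta_X^{n-1})$, split
\begin{equation*}
  2\theta\inpro{K(\Delta_X^n - \Delta_X^{n-1})}{\Delta_Y^{n+1}} = 2\theta\inpro{K(\Delta_X^n - \Delta_X^{n-1})}{\Delta_Y^n} + 2\theta\inpro{K(\Delta_X^n - \Delta_X^{n-1})}{\Delta_Y^{n+1} - \Delta_Y^n},
\end{equation*}
and move the unwanted second summand to the left side. The only non-trivial step is now to absorb this remaining cross-term using Young's inequality with a carefully chosen weight $\alpha = 1/(\theta\tau L^2)$, giving
\begin{equation*}
  2\theta \inpro{K(\Delta_X^n - \Delta_X^{n-1})}{\Delta_Y^{n+1} - \Delta_Y^n} \le \theta \norm{\Delta_X^n - \Delta_X^{n-1}}_{\tau^{-1}}^2 + \theta\tau\sigma L^2\, \norm{\Delta_Y^{n+1} - \Delta_Y^n}_{\sigma^{-1}}^2.
\end{equation*}
The hypothesis $\theta\sigma\tau L^2 \le 1$ is used exactly here: it guarantees that the last term is dominated by $\norm{\Delta_Y^{n+1} - \Delta_Y^n}_{\sigma^{-1}}^2$ which sits on the left, allowing us to drop this non-negative quantity. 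Finally, the term $2\langle K \Delta_X^n, \Delta_Y^{n+1}\rangle$ arising from the decomposition of $\Delta_{X_\theta}^n$ is merged with $2\langle K\Delta_X^{n+1}, \Delta_Y^{n+1}\rangle$ on the left into $2\langle K(\Delta_X^{n+1}-\Delta_X^n), \Delta_Y^{n+1}\rangle$. Collecting all contributions yields the stated inequality.

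The conceptual difficulty is essentially absent once the noise-cancellation observation is made; the only real obstacle is choosing the Young weight so that the tight constraint $\theta\sigma\tau L^2\le 1$ suffices. Everything else is a careful bookkeeping of inner products and a direct translation of the deterministic Chambolle-Pock argument to the coupling setting, with the Bregman terms appearing naturally from the proximal subgradient definitions.
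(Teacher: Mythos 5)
Your proposal is correct and follows essentially the same route as the paper: cancel the noise in the coupled differences, use the three-point/polarization identity to turn the subgradient pairings into telescoping norms plus the Bregman terms, expand the extrapolation $U_\theta^n = U^n + \theta(U^n - U^{n-1})$, split the cross-term $2\theta\inpro{K(U^n-U^{n-1})}{V^{n+1}}$ into a $V^n$ piece and a $V^{n+1}-V^n$ piece, absorb the latter by Young's inequality with weight $\alpha = 1/(\theta\tau L^2)$, and drop the resulting non-negative surplus $(1-\theta\sigma\tau L^2)\norm{V^{n+1}-V^n}_{\sigma^{-1}}^2$. The only cosmetic difference is that you start from the rearranged update equations and pair them with the iterate differences, whereas the paper starts from the Bregman distances and substitutes the subgradient representations; these are the same computation in a different order.
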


\begin{proof}
    In order to shorten the expressions, throughout the proof we abbreviate $U^n := X^n-\tilde X^n$, $U_\theta^n = X_\theta^n-\tilde X_\theta^n$ and $V^n := Y^n - \tilde Y^n$.\\
    By plugging in the representation of the subgradient $p^{n+1}$ of $g$ at $X^{n+1} - \sqrt{2\tau}\xi^{n+1}$ (and likewise $\tilde p^{n+1}$) given in the statement of the lemma, we can rewrite a symmetric Bregman divergence as
    \begin{align*}
        &2 D_{g,\text{sym}}^{p^{n+1},\tilde p^{n+1}}(X^{n+1}- \sqrt{2\tau}\xi^{n+1}, \tilde X^{n+1}- \sqrt{2\tau}\xi^{n+1}) \\
        &\quad = 2\inpro{U^{n+1}}{p^{n+1} - \tilde p^{n+1}} \\
        &\quad = 2\inpro{U^{n+1}}{(X^n-X^{n+1})/\tau-K^T Y^{n+1} - (\tilde X^n-\tilde X^{n+1})/\tau+K^T\tilde Y^{n+1}} \\
        &\quad = \frac{2}{\tau} \inpro{U^{n+1}}{U^n - U^{n+1}} - 2\inpro{U^{n+1}}{K^T V^{n+1}} \\
        &\quad = \norm{U^n}_{\tau^{-1}}^2 - \norm{U^{n+1}}_{\tau^{-1}}^2 - \norm{U^{n+1} - U^n}_{\tau^{-1}}^2 - 2\inpro{KU^{n+1}}{V^{n+1}},
    \end{align*}
    where we used the identity $-\inpro{A-B}{A} = \frac{1}{2}\norm{B}^2 - \frac{1}{2}\norm{A}^2 - \frac{1}{2}\norm{A-B}^2$ in the last equality. Repeating the same calculation for the subgradients $q^{n+1}$ of $f^\ast$ at $Y^{n+1}$, we further get
    \begin{align*}
        2D_{f^\ast,\text{sym}}^{q^{n+1},\tilde q^{n+1}}(Y^{n+1}, \tilde Y^{n+1}) &= 2\inpro{V^{n+1}}{(Y^n-Y^{n+1})/\sigma+KX_\theta^{n} - (\tilde Y^n-\tilde Y^{n+1})/\sigma - K \tilde X_\theta^{n}}\\
        &= \frac{2}{\sigma} \inpro{V^{n+1}}{V^n-V^{n+1}} + 2\inpro{V^{n+1}}{K U_\theta^{n}} \\
        &= \norm{V^n}_{\sigma^{-1}}^2 - \norm{V^{n+1}}_{\sigma^{-1}}^2 - \norm{V^{n+1}-V^n}_{\sigma^{-1}}^2 + 2\inpro{K U_\theta^{n}}{V^{n+1}}.
    \end{align*}
    Adding up the two equalities above gives
    \begin{align}\label{eq:discretetime-lemma-proof-identity}
        2\inpro{U^{n+1}}{p^{n+1}-\tilde p^{n+1}} + 2\inpro{V^{n+1}}{q^{n+1}-\tilde q^{n+1}} &= \norm{U^n}_{\tau^{-1}}^2 - \norm{U^{n+1}}_{\tau^{-1}}^2 - \norm{U^{n+1} - U^n}_{\tau^{-1}}^2 \notag\\
        &\quad + \norm{V^n}_{\sigma^{-1}}^2 - \norm{V^{n+1}}_{\sigma^{-1}}^2 - \norm{V^{n+1} - V^n}_{\sigma^{-1}}^2 \notag\\
        &\quad - 2\inpro{KU^{n+1}}{V^{n+1}} + 2\inpro{KU_\theta^{n}}{V^{n+1}}.
    \end{align}
    By definition it holds $U_\theta^n = U^n + \theta(U^n -U^{n-1})$, so the two inner products in the last line can be rewritten in the following way
    \begin{align}
        &- 2\inpro{KU^{n+1}}{V^{n+1}} + 2\inpro{KU_\theta^{n}}{V^{n+1}} \notag \\
        &= -2\inpro{K(U^{n+1} - U^n)}{V^{n+1}} + 2\theta \inpro{ K(U^n-U^{n-1})}{V^{n+1}} \notag \\
        &= -2\inpro{K(U^{n+1} - U^n)}{V^{n+1}} + 2\theta \inpro{K(U^n-U^{n-1})}{V^n} + 2\theta \inpro{K(U^n-U^{n-1})}{V^{n+1}-V^n}\notag \\
        &\le -2\inpro{K(U^{n+1} - U^n)}{V^{n+1}} + 2\theta \inpro{K(U^n-U^{n-1})}{V^n} \notag \\
        &\quad + \theta \norm{U^n-U^{n-1}}_{\tau^{-1}}^2 + \tau \sigma \theta L^2 \norm{V^{n+1}-V^{n}}_{\sigma^{-1}}^2.\label{eq:inequality-bilinear-terms}
    \end{align}
    Inserting this into \eqref{eq:discretetime-lemma-proof-identity} and using $(1-\sigma \tau \theta L^2)\norm{V^{n+1}-V^n}_{\sigma^{-1}}^2 \ge 0$ gives the statement of the Lemma.
\end{proof}

\begin{proof}[Proof of \cref{thm:dt-stability-convex}]
    In the convex case, we start by noting that symmetric Bregman distances for convex functions are always non-negative, i.e.
    \begin{gather*}
        D_{g,\text{sym}}^{p^{n+1},\tilde p^{n+1}}(X^{n+1}- \sqrt{2\tau}\xi^{n+1}, \tilde X^{n+1}- \sqrt{2\tau}\xi^{n+1}) \ge 0,\\
        D_{f^\ast,\text{sym}}^{q^{n+1},\tilde q^{n+1}}(Y^{n+1}, \tilde Y^{n+1}) \ge 0.
    \end{gather*}
    Setting $\theta = 1$, the inequality in \cref{lem:primal-dual-ineq} hence gives
    \begin{align*}
        & \norm{X^{n+1}-\tilde X^{n+1}}_{\tau^{-1}}^2 +\norm{X^{n+1}-\tilde X^{n+1}-X^n+\tilde X^n}_{\tau^{-1}}^2 + \norm{Y^{n+1}-\tilde Y^{n+1}}_{\sigma^{-1}}^2 \\
        &\quad + 2\inpro{K(X^{n+1}-\tilde X^{n+1}-X^n+\tilde X^n)}{Y^{n+1}-\tilde Y^{n+1}} + (1-\sigma \tau L^2)\norm{Y^{n+1}-\tilde Y^{n+1}-Y^n+\tilde Y^n}_{\sigma^{-1}}^2\\
        &\le \norm{X^n-\tilde X^n}_{\tau^{-1}}^2 +\norm{X^n-\tilde X^n-X^{n-1}+\tilde X^{n-1}}_{\tau^{-1}}^2 + \norm{Y^n - \tilde Y^n}_{\sigma^{-1}}^2\\
        &\quad + 2\inpro{K(X^n-\tilde X^n - X^{n-1}+\tilde X^{n-1})}{Y^n-\tilde Y^n}.
    \end{align*}
    Iterating this inequality over $n = 0,\dots,N-1$ steps and using $X^{-1}=X^0$, $\tilde X^{-1} = \tilde X^0$ gives
    \begin{align*}
        &\norm{X^N-\tilde X^N}_{\tau^{-1}}^2 + \norm{Y^N-\tilde Y^N}_{\sigma^{-1}}^2 + \norm{X^N-\tilde X^N-X^{N-1}+\tilde X^{N-1}}_{\tau^{-1}}^2 \\
        &\quad + 2\inpro{K(X^N-\tilde X^N-X^{N-1}+\tilde X^{N-1})}{Y^N-\tilde Y^N}\\ 
        &\le \norm{X^{0}-\tilde X^{0}}_{\tau^{-1}}^2 + \norm{Y^{0}-\tilde Y^{0}}_{\sigma^{-1}}^2
    \end{align*}
    Using the bound $2\inpro{a}{b} \ge - c^{-1}\norm{a}^2 - c \norm{b}^2$ with $c=\tau L^2$ on the remaining inner product implies
    \begin{align*}
        &\norm{X^N-\tilde X^N}_{\tau^{-1}}^2 + (1-\tau\sigma L^2)\norm{Y^N-\tilde Y^N}_{\sigma^{-1}}^2 \le \norm{X^{0}-\tilde X^{0}}_{\tau^{-1}}^2 + \norm{Y^{0}-\tilde Y^{0}}_{\sigma^{-1}}^2.
    \end{align*}
    Taking expectations on both sides, by definition of the Wasserstein distance, the left side is lower bounded by $\tau^{-1} \calW_2^2(\mu_N, \tilde \mu_N)$. Since the resulting inequality holds for all couplings of initializations $(X^0,Y^0)$ and $(\tilde X^0, \tilde Y^0)$, taking the infimum over all couplings on the right side gives $\tau^{-1} \calT^2_{1, \lambda^{-1}}(\pi_0, \tilde \pi_0)$, proving the first inequality. The second inequality in the statement of \cref{thm:dt-stability-convex} results from lower bounding the left side not by the Wasserstein distance in the primal variable, but instead by $C^{-2} \tau^{-1} \calT^2_{1,\lambda^{-1}}(\pi_N,\tilde \pi_N)$.
\end{proof}

\begin{proof}[Proof of \cref{thm:dt-contraction-strongly-convex}]
    We now assume strong convexity of $g,f^\ast$. In that case, the symmetric Bregman distances obey the following lower bounds
    \begin{gather*}
        D_{g,\text{sym}}^{p^{n+1},\tilde p^{n+1}}(X^{n+1}- \sqrt{2\tau}\xi^{n+1}, \tilde X^{n+1}- \sqrt{2\tau}\xi^{n+1}) \ge \omega_g \norm{X^{n+1} - \tilde X^{n+1}}^2,\\
        D_{f^\ast,\text{sym}}^{q^{n+1},\tilde q^{n+1}}(Y^{n+1}, \tilde Y^{n+1}) \ge \omega_{f^\ast} \norm{Y^{n+1} - \tilde Y^{n+1}}^2.
    \end{gather*}
    The inequality in \cref{lem:primal-dual-ineq} now implies
    \begin{align*}
        & \norm{X^{n+1}-\tilde X^{n+1}}_{\tau^{-1}+2\omega_g}^2 +\norm{X^{n+1}-\tilde X^{n+1}-X^n+\tilde X^n}_{\tau^{-1}}^2 + \norm{Y^{n+1}-\tilde Y^{n+1}}_{\sigma^{-1}+2\omega_{f^\ast}}^2 \\
        &\quad + 2\inpro{K(X^{n+1}-\tilde X^{n+1}-X^n+\tilde X^n)}{Y^{n+1}-\tilde Y^{n+1}} \\
        &\le \norm{X^n-\tilde X^n}_{\tau^{-1}}^2 + \theta \norm{X^n-\tilde X^n-X^{n-1}+\tilde X^{n-1}}_{\tau^{-1}}^2 + \norm{Y^n - \tilde Y^n}_{\sigma^{-1}}^2\\
        &\quad + 2\theta \inpro{K(X^n-\tilde X^n - X^{n-1}+\tilde X^{n-1})}{Y^n-\tilde Y^n}.
    \end{align*}
    Define now 
    \begin{align*}
        \Delta^n &\coloneqq \norm{X^n-\tilde X^n}_{\tau^{-1}+2\omega_g}^2 +\norm{X^n-\tilde X^n-X^{n-1}+\tilde X^{n-1}}_{\tau^{-1}}^2 + \norm{Y^n - \tilde Y^n}_{\sigma^{-1}+2\omega_{f^\ast}}^2\\
        &\quad + 2 \inpro{K(X^n-\tilde X^n - X^{n-1}+\tilde X^{n-1})}{Y^n-\tilde Y^n}.
    \end{align*}
    Then under the assumptions on $\theta$ given in \cref{thm:dt-contraction-strongly-convex}, it holds
    \begin{align*}
        \Delta^{n+1} &\le \norm{X^n-\tilde X^n}_{\tau^{-1}}^2 + \theta \norm{X^n-\tilde X^n-X^{n-1}+\tilde X^{n-1}}_{\tau^{-1}}^2 + \norm{Y^n - \tilde Y^n}_{\sigma^{-1}}^2\\
        &\quad + 2 \theta \inpro{K(X^n-\tilde X^n - X^{n-1}+\tilde X^{n-1})}{Y^n-\tilde Y^n}\\
        &\le \theta \Delta^n
    \end{align*}
    Iterating this inequality over $n=0,\dots,N-1$ we obtain
    \begin{align*}
        \Delta^N &\le \theta^{N} \Delta^0.
    \end{align*}
    We use again Cauchy-Schwarz and Young's inequality on the inner product in $\Delta^N$ as well as the initial condition $X^{-1}=X^0$, $\tilde X^{-1}=X^0$ and obtain
    \begin{align*}
        &\norm{X^{N}-\tilde X^{N}}_{\tau^{-1}+2\omega_g}^2 + (1-\theta \tau \sigma L^2) \norm{Y^{N}-\tilde Y^{N}}_{\sigma^{-1}+2\omega_{f^\ast}}^2 \\
        &\quad \le \theta^{N}\left(\norm{X^{0}-\tilde X^{0}}_{\tau^{-1}+2\omega_g}^2 + \norm{Y^{0}-\tilde Y^{0}}_{\sigma^{-1}+2\omega_{f^\ast}}^2 \right)
    \end{align*}
    From here, we proceed as in the proof of \cref{thm:dt-stability-convex}: Taking expectations on both sides, the left side can be bounded below by $(\tau^{-1}+2\omega_g) W_2^2(\mu_N,\tilde \mu_N)$ or alternatively in the joint variable by $\tilde C^{-2} (\tau^{-1}+2\omega_g) \calT_{1,\tilde \lambda^{-1}}^2(\pi_N,\tilde \pi_N)$. The transport distance on the right is obtained by taking the infimum over all initialization couplings, completing the proof.
\end{proof}

Having proven the stability theorems for the iteration \eqref{eq:pd_sampling_update}, we briefly comment on the version of the algorithm that adds the stochastic term $\xi$ inside the argument of the proximal operator.

\begin{corollary}\label{cor:stability-modified-algorithm}
    Instead of \eqref{eq:pd_sampling_update}, consider the iteration procedure 
    \begin{equation}\label{eq:pd_sampling_update_xi_in_prox}
        \left\{\begin{array}{l}
            \xi^{n+1} \sim \mathrm N(0,I) \\
            Y^{n+1} = \prox_{\sigma f^\ast}(Y^n + \sigma K X_\theta^n) \\
            X^{n+1} = \prox_{\tau g}(X^n - \tau K^T Y^{n+1} + \sqrt{2\tau}\xi^{n+1}) \\
            X_\theta^{n+1} = X^{n+1} + \theta(X^{n+1} - X^n)
        \end{array}\right.
    \end{equation}
    with the stochastic term $\sqrt{2\tau} \xi$ added inside the proximal mapping. For the iterates of this algorithm, under the same assumptions, the results of \cref{thm:dt-stability-convex,thm:dt-contraction-strongly-convex} also hold. The stationary distribution of this modified algorithm, however, may differ from the stationary state $\pi_{\lambda,\tau}$ of \eqref{eq:pd_sampling_update}.
\end{corollary}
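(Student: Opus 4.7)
The plan is to revisit the proof of \cref{lem:primal-dual-ineq} and observe that only the subgradient representation of $p^{n+1}$ changes, while the resulting coupled inequality remains structurally identical. For the modified iteration \eqref{eq:pd_sampling_update_xi_in_prox}, the optimality condition for the proximal mapping gives
\[
    p^{n+1} \coloneqq \frac{X^n - X^{n+1} + \sqrt{2\tau}\xi^{n+1}}{\tau} - K^T Y^{n+1} \in \partial g(X^{n+1}),
\]
whereas $q^{n+1}$ is unchanged since the stochastic term only enters the primal update. The crucial observation is that the algebraic expression for $p^{n+1}$ in terms of $X^n, X^{n+1}, Y^{n+1}, \xi^{n+1}$ is the same as in the original iteration; only the point at which $\partial g$ is evaluated has shifted from $X^{n+1} - \sqrt{2\tau}\xi^{n+1}$ to $X^{n+1}$.

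Next, I would rerun the coupling calculation in the proof of \cref{lem:primal-dual-ineq}. Upon forming the difference $p^{n+1} - \tilde p^{n+1}$ between the two coupled chains, the common stochastic term $\sqrt{2\tau}\xi^{n+1}$ cancels, yielding
\[
    p^{n+1} - \tilde p^{n+1} = \tau^{-1}(U^n - U^{n+1}) - K^T V^{n+1},
\]
exactly as before in the notation $U^n = X^n - \tilde X^n$, $V^n = Y^n - \tilde Y^n$. Consequently, the identity $2\inpro{U^{n+1}}{p^{n+1} - \tilde p^{n+1}}$ expands into the same telescoping expression in terms of $\norm{U^n}^2_{\tau^{-1}}$ and $\norm{U^{n+1}-U^n}^2_{\tau^{-1}}$ that appears in the original proof. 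The dual computation involving $q^{n+1}, \tilde q^{n+1}$ is unaffected. Adding the two identities and bounding the cross term $2\inpro{KU_\theta^n}{V^{n+1}}$ via the same Cauchy-Schwarz-Young estimate \eqref{eq:inequality-bilinear-terms} reproduces the inequality of \cref{lem:primal-dual-ineq}, except that the Bregman terms now read $D_{g,\text{sym}}^{p^{n+1},\tilde p^{n+1}}(X^{n+1}, \tilde X^{n+1})$ instead of $D_{g,\text{sym}}^{p^{n+1},\tilde p^{n+1}}(X^{n+1}- \sqrt{2\tau}\xi^{n+1}, \tilde X^{n+1}- \sqrt{2\tau}\xi^{n+1})$.

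Finally, the lower bounds on the Bregman distances used in the proofs of \cref{thm:dt-stability-convex,thm:dt-contraction-strongly-convex} are agnostic to this shift: convexity of $g$ gives $D_{g,\text{sym}}^{p^{n+1},\tilde p^{n+1}}(X^{n+1}, \tilde X^{n+1}) \ge 0$, and $\omega_g$-strong convexity gives $D_{g,\text{sym}}^{p^{n+1},\tilde p^{n+1}}(X^{n+1}, \tilde X^{n+1}) \ge \omega_g \norm{X^{n+1} - \tilde X^{n+1}}^2 = \omega_g \norm{U^{n+1}}^2$, which is identical to the bound used before (since the argument shift by $\sqrt{2\tau}\xi^{n+1}$ is the same for both chains and cancels upon taking the difference). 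From this point onwards, the iteration of the inequality, the application of Young's inequality on the cross term, and the passage to Wasserstein and transport distances are identical to the proofs of \cref{thm:dt-stability-convex,thm:dt-contraction-strongly-convex}. No step of the original argument is obstructed, so no genuine new difficulty arises; the corollary essentially follows from the robustness of the coupling computation to whether the noise is placed inside or outside the proximal operator. The remark about $\pi_{\lambda,\tau}$ differing is simply the observation that the fixed point equation characterizing the invariant law does change, since the noise enters the primal update differently.
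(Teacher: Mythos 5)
Your proof is correct and takes essentially the same approach as the paper: both identify that the only change is the point at which $\partial g$ is evaluated (now $X^{n+1}$ rather than $X^{n+1}-\sqrt{2\tau}\xi^{n+1}$), observe that the algebraic form of $p^{n+1}$ and hence the whole coupling computation in \cref{lem:primal-dual-ineq} is unchanged, and note that the Bregman lower bounds produce the identical quantity $\omega_g\norm{U^{n+1}}^2$. Your write-up is slightly more explicit about why the shift by the common noise leaves the strong-convexity bound unaffected, but the argument is the same.
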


\begin{proof}
    This can be seen by checking that the central inequality in \cref{lem:primal-dual-ineq} still holds. Under the changed iteration, we have 
    \begin{equation*}
         \begin{array}{l}
            p^{n+1} \coloneqq (X^n-X^{n+1}+\sqrt{2\tau}\xi^{n+1})/\tau - K^T Y^{n+1}, \in \partial g(X^{n+1}).
        \end{array}
    \end{equation*}
    Note the difference to \eqref{eq:subgradients-ulpda}, where $p^{n+1}$ was a subgradient at the point $X^{n+1}-\sqrt{2\tau}\xi^{n+1}$ instead. The only change we need to make in the proof of \cref{lem:primal-dual-ineq} is that instead of the symmetric Bregman distance at the point $(X^{n+1}- \sqrt{2\tau}\xi^{n+1}, \tilde X^{n+1}- \sqrt{2\tau}\xi^{n+1})$, we now start with
    $$ D_{g,\text{sym}}^{p^{n+1},\tilde p^{n+1}}(X^{n+1}, \tilde X^{n+1}) = \inpro{U^{n+1}}{p^{n+1} - \tilde p^{n+1}}. $$
    All the following computations remain unchanged.
\end{proof}

\subsection{Convergence to the Continuous Time Stationary Solution}
Having shown that the algorithm is stable with a limiting stationary distribution $\pi_{\lambda,\tau}$ in the strongly convex case, we need to find bounds on the distance between $\pi_{\lambda,\tau}$ and the continuous time stationary solution $\pi_\lambda$, in order to relate the drawn samples to the target. We approach this by considering again a coupling, but this time between the algorithm \eqref{eq:pd_sampling_update} and the stationary state of its continuous time limiting SDE \eqref{eq:ct-pd-diffusion-jointvar}. 

Assume that the time steps $\tau,\sigma$ are constant with $\lambda = \sigma/\tau$. Let $(X^0,Y^0) \sim \pi^0$ for some initial distribution $\pi^0$, and set $X_\theta^0 = X^0$. Now consider a process $\tilde Z_t = (\tilde X_t,\tilde Y_t)$, $t \in \bbR_{\ge 0}$ which is initialized at $\tilde Z_0 \sim \pi_\lambda$. Let $W_t$ be $\bbR^d$-valued Brownian motion and denote $\xi^{n+1} := \tau^{-1/2} \int_{n\tau}^{(n+1)\tau} \mathrm d W_t$ for all $n \in \bbN_0$. We now consider $Z^n = (X^n,Y^n)$, $n \in \bbN_0$ and $\tilde Z_t = (\tilde X_t,\tilde Y_t)$, $t \in \bbR_{\ge 0}$ defined by the coupling
\begin{equation}\label{eq:coupling-ulpda-SDE}
    \left\{\begin{array}{l}
        Y^{n+1} = \prox_{\sigma f^\ast}(Y^n + \sigma K X_\theta^n) \\
        X^{n+1} = \prox_{\tau g}(X^n - \tau K^T Y^{n+1}) + \sqrt{2\tau} \xi^{n+1} \\
        X_\theta^{n+1} = X^{n+1} + \theta(X^{n+1} - X^n) \vspace{0.8em} \\
        \rmd \tilde X_t = -(\nabla g(\tilde X_t) + K^T\tilde Y_t) \rmd t + \sqrt{2} \rmd W_t\\
        \rmd \tilde Y_t = - \lambda (\nabla f^\ast(\tilde Y_t) - K \tilde X_t) \rmd t.
    \end{array}\right.
\end{equation}
It immediately follows that the marginal distribution of $\tilde Z_t$ is stationary with $\tilde Z_t \sim \pi_\lambda$, $t \in \bbR_{\ge 0}$. For notational convenience, we also define an overrelaxation variable in continuous time by $\tilde X_{t,\theta} := \tilde X_t + \theta (\tilde X_t - \tilde X_{t-\tau})$ and set $X_{t-\tau} = X_0$ when $t \in [0,\tau)$. For the coupling \eqref{eq:coupling-ulpda-SDE}, we can prove an inequality similar to \cref{lem:primal-dual-ineq}, but with additional discretization errors.

\begin{lemma}\label{lem:primal-dual-ineq-discr-error}
    Let $(X^n,Y^n)$, $n \ge 1$, and $(X_t, Y_t)$, $t\ge 0$ be given by \eqref{eq:coupling-ulpda-SDE} with $\theta \sigma \tau L^2 \le 1$. The subgradients $p^n \in \partial g(X^n - \sqrt{2\tau}\xi^{n})$ and $ q^n \in \partial f^\ast (Y^n)$ at the discrete time iterates are given in \eqref{eq:subgradients-ulpda}. Fix now some $n \ge 0$, and let $t = n\tau$. Then we have the following inequality.
    \begin{align*}
        &2 \left(D_{g,\text{sym}}^{p^{n+1},\tilde p'}(X^{n+1}-\sqrt{2\tau}\xi^{n+1}, \tilde X_{t+\tau}-\sqrt{2\tau}\xi^{n+1}) + D_{f^\ast,\text{sym}}^{q^{n+1},\tilde q_{t+\tau}}(Y^{n+1}, \tilde Y_{t+\tau}) \right) \\
        &\quad + \norm{X^{n+1}-\tilde X_{t+\tau}}_{\tau^{-1}}^2 +\norm{X^{n+1}-\tilde X_{t+\tau}-X^n+\tilde X_t}_{\tau^{-1}}^2 + \norm{Y^{n+1}-\tilde Y_{t+\tau}}_{\sigma^{-1}}^2 \\
        &\quad + 2\inpro{K(X^{n+1}-\tilde X_{t+\tau}-X^n+\tilde X_t)}{Y^{n+1}-\tilde Y_{t+\tau}} \\
        &\le \norm{X^n-\tilde X_t}_{\tau^{-1}}^2 + \theta \norm{X^n-\tilde X_t-X^{n-1}+\tilde X_{t-\tau}}_{\tau^{-1}}^2 + \norm{Y^n - \tilde Y_t}_{\sigma^{-1}}^2\\
        &\quad + 2\theta \inpro{K(X^n-\tilde X_t - X^{n-1}+\tilde X_{t-\tau})}{Y^n-\tilde Y_t}\\
        &\quad +\frac{2}{\tau} \inpro{X^{n+1} - \tilde X_{t+\tau}}{\int_t^{t+\tau} (\tilde p_s - p' + K^T(\tilde Y_s - \tilde Y_{t+\tau})) \rmd s} \\
        &\quad +\frac{2}{\tau} \inpro{Y^{n+1} - \tilde Y_{t+\tau}}{\int_t^{t+\tau} (\tilde q_s - \tilde q_{t+\tau} - K(\tilde X_s - \tilde X_{t,\theta})) \rmd s},
    \end{align*}
    where $p' \in \partial g(\tilde X_{t+\tau} - \sqrt{2\tau}\xi^{n+1})$ arbitrary.
\end{lemma}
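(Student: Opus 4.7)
The plan is to mirror the proof of Lemma~\ref{lem:primal-dual-ineq} step by step, with the continuous-time process $(\tilde X_t, \tilde Y_t)$ replacing the second discrete chain and producing the same algebraic structure up to two integral residuals that encode the time-discretization error. The core observation is that the SDE in~\eqref{eq:coupling-ulpda-SDE}, integrated over one step $[t, t+\tau]$, can be rewritten in a form formally identical to the proximal subgradient identities~\eqref{eq:subgradients-ulpda}.

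Concretely, I would first integrate the primal SDE and use that, by definition of $\xi^{n+1}$, one has $\sqrt{2}\int_t^{t+\tau}\rmd W_s = \sqrt{2\tau}\xi^{n+1}$, which gives
\begin{equation*}
\tilde X_{t+\tau} - \sqrt{2\tau}\xi^{n+1} = \tilde X_t - \int_t^{t+\tau}(\tilde p_s + K^T\tilde Y_s)\rmd s,
\end{equation*}
where $\tilde p_s \coloneqq \nabla g(\tilde X_s)$. Adding and subtracting $p'$ and $K^T\tilde Y_{t+\tau}$ inside the integral, for any $p' \in \partial g(\tilde X_{t+\tau} - \sqrt{2\tau}\xi^{n+1})$ one obtains the continuous counterpart of the discrete identity for $p^{n+1}$,
\begin{equation*}
p' = \frac{\tilde X_t - \tilde X_{t+\tau} + \sqrt{2\tau}\xi^{n+1}}{\tau} - K^T \tilde Y_{t+\tau} - \frac{1}{\tau}\int_t^{t+\tau}(\tilde p_s - p' + K^T(\tilde Y_s - \tilde Y_{t+\tau}))\rmd s.
\end{equation*}
An identical calculation for the dual SDE, this time using $\lambda = \sigma/\tau$ and the overrelaxation $\tilde X_{t,\theta}$, yields
\begin{equation*}
\tilde q_{t+\tau} = \frac{\tilde Y_t - \tilde Y_{t+\tau}}{\sigma} + K\tilde X_{t,\theta} - \frac{1}{\tau}\int_t^{t+\tau}(\tilde q_s - \tilde q_{t+\tau} - K(\tilde X_s - \tilde X_{t,\theta}))\rmd s,
\end{equation*}
with $\tilde q_s \coloneqq \nabla f^\ast(\tilde Y_s)$.

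Setting $U^n \coloneqq X^n - \tilde X_t$, $U_\theta^n \coloneqq X_\theta^n - \tilde X_{t,\theta}$, $V^n \coloneqq Y^n - \tilde Y_t$, subtracting these continuous identities from~\eqref{eq:subgradients-ulpda} expresses $p^{n+1} - p'$ and $q^{n+1} - \tilde q_{t+\tau}$ in exactly the same algebraic form as the subgradient differences in the proof of Lemma~\ref{lem:primal-dual-ineq}, up to the two integral residuals; the stochastic term $\sqrt{2\tau}\xi^{n+1}$ cancels in the primal subtraction. Taking inner products with $U^{n+1}$ and $V^{n+1}$ and applying the polarization identity $-\inpro{A-B}{A} = \tfrac12(\norm{B}^2 - \norm{A}^2 - \norm{A-B}^2)$ then reproduces verbatim the analogue of~\eqref{eq:discretetime-lemma-proof-identity} with the two residual inner products added on the right-hand side.

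The rest is routine bookkeeping: the manipulation of the bilinear terms $-2\inpro{KU^{n+1}}{V^{n+1}} + 2\inpro{KU_\theta^n}{V^{n+1}}$ is literally~\eqref{eq:inequality-bilinear-terms} (with the conventions $X^{-1} = X^0$ and $\tilde X_{t-\tau} = \tilde X_0$ for $t = 0$ making the telescoping consistent), and Young's inequality together with $\theta\sigma\tau L^2 \le 1$ absorbs the resulting $\theta\sigma\tau L^2\norm{V^{n+1}-V^n}_{\sigma^{-1}}^2$ against $\norm{V^{n+1}-V^n}_{\sigma^{-1}}^2$. I do not foresee a real obstacle beyond correctly tracking signs in the residuals; the only conceptually new ingredient is recognising that $p'$ need not be derived from any SDE-based construction and is best left as a free parameter, so that subsequent applications of the estimate can choose $p'$ adaptively (for instance to optimise the error bound in a contraction argument for $\calT_{1,\lambda^{-1}}(\pi_N, \pi_\lambda)$).
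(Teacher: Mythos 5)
Your proposal mirrors the paper's proof exactly: integrate the two SDEs over $[t,t+\tau]$, use $\sqrt{2}\int_t^{t+\tau}\rmd W_s = \sqrt{2\tau}\xi^{n+1}$ so the noise cancels inside the primal Bregman pairing, regroup to express $p'$ and $\tilde q_{t+\tau}$ as the discrete-form quotients plus the two residual integrals, then apply the polarization identity and reuse the bilinear-term bound \eqref{eq:inequality-bilinear-terms} from \cref{lem:primal-dual-ineq}. The argument and every intermediate identity match the paper's, so the proposal is correct and is essentially the same proof.
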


\begin{proof}
    As before, we introduce the abbreviations $U^n := X^n-\tilde X_t$, $U_\theta^n = X_\theta^n-\tilde X_{t,\theta}$ and $V^n := Y^n - \tilde Y_t$.\\
   Integrating the SDE for the continuous time primal variable $\tilde X$ over $[t,t+\tau]$, we obtain the following identity
    \begin{gather}
        \tilde X_{t+\tau} = \tilde X_t - \int_{t}^{t+\tau} (\tilde p_s + K^T \tilde Y_s) \rmd s + \sqrt{2\tau} \xi^{n+1} \notag \\
        \Leftrightarrow \frac{1}{\tau} \int_{t}^{t+\tau} \tilde p_s \rmd s = (\tilde X_t - \tilde X_{t+\tau} + \sqrt{2\tau} \xi^{n+1})/\tau - \frac 1 \tau \int_{t}^{t+\tau} K^T \tilde Y_s \rmd s.\label{eq:integrated-primal-SDE-reformulated}
    \end{gather}
    As in the proof of \cref{lem:primal-dual-ineq}, we now rewrite the symmetric Bregman distance of $g$ at the point $(X^{n+1} - \sqrt{2\tau}\xi^{n+1},\tilde X_{t+\tau} - \sqrt{2\tau}\xi^{n+1})$. As before, we insert the representation of the subgradient $p^{n+1}$ of $g$ at $X^{n+1} - \sqrt{2\tau}\xi^{n+1}$. For the continuous time term, we add zero by adding and subtracting the two sides of \eqref{eq:integrated-primal-SDE-reformulated} in order to separate the discretization error in the drift:
    \begin{align*}
        &2D_{g,\text{sym}}^{p^{n+1}, p'}(X^{n+1}- \sqrt{2\tau}\xi^{n+1}, \tilde X_{t+\tau} - \sqrt{2\tau}\xi^{n+1}) \\
        &= 2\inpro{U^{n+1}}{p^{n+1} - p'} \\
        &= 2\inpro{U^{n+1}}{(X^n-X^{n+1})/\tau-K^T Y^{n+1} - (\tilde X_t - \tilde X_{t+\tau})/\tau + \frac{1}{\tau}\int_t^{t+\tau} K^T \tilde Y_s \rmd s + \frac 1 \tau \int_{t}^{t+\tau} (\tilde p_s - p')\rmd s } \\
        &= \frac{2}{\tau} \inpro{U^{n+1}}{U^n-U^{n+1}} - 2\inpro{U^{n+1}}{K^T V^{n+1}} + \frac{2}{\tau} \inpro{U^{n+1}}{\int_{t}^{t+\tau} (\tilde p_s - p' + K^T(\tilde Y_s - \tilde Y_{t+\tau}))\rmd s }\\
        &= \norm{U^n}_{\tau^{-1}}^2 - \norm{U^{n+1}}_{\tau^{-1}}^2 - \norm{U^{n+1} - U^n}_{\tau^{-1}}^2 - 2\inpro{KU^{n+1}}{V^{n+1}} \\
        &\quad + \frac{2}{\tau} \inpro{U^{n+1}}{\int_{t}^{t+\tau} (\tilde p_s - p' + K^T(\tilde Y_s - \tilde Y_{t+\tau}))\rmd s }.
    \end{align*}
    Similarly, for the dual variable in continuous time, we obtain the integrated SDE
    \begin{gather}
        \tilde Y_{t+\tau} = \tilde Y_t - \lambda \int_{t}^{t+\tau} (\tilde q_s - K \tilde X_s) \rmd s \notag \\
        \Leftrightarrow\quad \frac{1}{\tau} \int_{t}^{t+\tau} \tilde q_s \rmd s = (\tilde Y_t - \tilde Y_{t+\tau})/\sigma + \frac 1 \tau \int_{t}^{t+\tau} K \tilde X_s \rmd s. \label{eq:integrated-dual-SDE-reformulated}
    \end{gather}
    By plugging in $q^{n+1}$ from \eqref{eq:subgradients-ulpda} and adding and subtracting the sides of \eqref{eq:integrated-dual-SDE-reformulated}, we can rewrite the symmetric Bregman distance of $f^\ast$ at $(Y^{n+1},\tilde Y_{t+\tau})$ as
    \begin{align*}
        &2\inpro{Y^{n+1} - \tilde Y_{t+\tau}}{q^{n+1} - \tilde q_{t+\tau}} \\
        &= 2\inpro{V^{n+1}}{(Y^n-Y^{n+1})/\sigma + K X^n_\theta - (\tilde Y_t - \tilde Y_{t+\tau})/\sigma - \frac{1}{\tau}\int_t^{t+\tau} K \tilde X_s \rmd s + \frac{1}{\tau}\int_{t}^{t+\tau} (\tilde q_s - \tilde q_{t+\tau})\rmd s }\\
        &= \frac{2}{\sigma} \inpro{V^{n+1}}{V^n-V^{n+1}} + 2\inpro{V^{n+1}}{KU^n_\theta} + \frac{2}{\tau} \inpro{V^{n+1}}{\int_{t}^{t+\tau} (\tilde q_s - \tilde q_{t+\tau} - K(\tilde X_s - \tilde X_{t,\theta}))\rmd s }\\
        &= \norm{V^n}_{\sigma^{-1}}^2 - \norm{V^{n+1}}_{\sigma^{-1}}^2 - \norm{V^{n+1}- V^n}_{\sigma^{-1}}^2  + 2\inpro{V^{n+1}}{KU^n_\theta} \\
        &\quad + \frac{2}{\tau} \inpro{V^{n+1}}{\int_{t}^{t+\tau} (\tilde q_s - \tilde q_{t+\tau} - K(\tilde X_s - \tilde X_{t,\theta}))\rmd s }
    \end{align*}
    Adding the primal and dual Bregman distances, we obtain
    \begin{align*}
        2&\inpro{U^{n+1}}{p^{n+1}-p'} + 2\inpro{V^{n+1}}{q^{n+1}-\tilde q^{n+1}} \\
        &= \norm{U^n}_{\tau^{-1}}^2 - \norm{U^{n+1}}_{\tau^{-1}}^2 - \norm{U^{n+1} - U^n}_{\tau^{-1}}^2 - 2\inpro{KU^{n+1}}{V^{n+1}} \\
        &\quad + \norm{V^n}_{\sigma^{-1}}^2 - \norm{V^{n+1}}_{\sigma^{-1}}^2 - \norm{V^{n+1} - V^n}_{\sigma^{-1}}^2 + 2\inpro{KU_\theta^{n}}{V^{n+1}} \\
        &\quad + \frac{2}{\tau} \inpro{U^{n+1}}{\int_{t}^{t+\tau} (\tilde p_s - p' + K^T(\tilde Y_s - \tilde Y_{t+\tau}))\rmd s }\\
        &\quad + \frac{2}{\tau} \inpro{V^{n+1}}{\int_{t}^{t+\tau} (\tilde q_s - \tilde q_{t+\tau} - K(\tilde X_s - \tilde X_{t,\theta}))\rmd s }.
    \end{align*}
    As in the proof of \cref{lem:primal-dual-ineq}, the argument is complete by plugging in inequality \eqref{eq:inequality-bilinear-terms}.
\end{proof}

The main difference between \cref{lem:primal-dual-ineq} and \cref{lem:primal-dual-ineq-discr-error} are the additional two terms due to the discretization error. The next auxiliary result provides an upper bound to these errors, controlled by the primal step size $\tau$.

\begin{lemma}\label{lem:discretization-error-bound}
    Assume that $g \in C^1(\calX)$ and $\nabla g$ is Lipschitz continuous with constant $L_{\nabla g} > 0$, and that $f$ is $\omega_f$-strongly convex with $\omega_f > 0$.
    Assume further that there exists a constant $D_4$ such that
    $$ \bbE_{Z \sim \pi_\lambda} \left[ \norm{A_{\tup{PD}}(Z)}^2 \right] \le D_4. $$
    Let $Z_0 \sim \pi_\lambda$ and $Z_t = (X_t,Y_t), t>0$ satisfy the SDE \eqref{eq:ct-pd-diffusion-jointvar}, with $p_t \in \nabla g(X_t)$ and $q_t \in \nabla f^\ast(Y_t)$. 
    For some $\tau > 0$ and $n\in \bbN_0$, fix $t=n\tau$. Let further $p' \in \partial g(X_{t+\tau} - \sqrt{2\tau}\xi)$ arbitrary, where $\xi = \tau^{-1/2} \int_t^{t+\tau} \mathrm d W_s$, and denote $X_{t,\theta}$ as before.
    Then we have the following bounds
    \begin{gather*}
        \bbE \left[\bigg\lVert \int_t^{t+\tau} (p_s - p' - K^T( Y_s - Y_{t+\tau})) \rmd s \bigg\rVert^2 \right] \le 4L_{\nabla g}^2 d \tau^3 + \calO(\tau^4), \\
        \bbE \left[\bigg\lVert \int_t^{t+\tau} (q_s - q_{t+\tau} + K(X_s - X_{t,\theta})) \rmd s \bigg\rVert^2 \right] \le 6 L^2 (1+2\theta^2) d \tau^3 + \calO(\tau^4).
    \end{gather*}
\end{lemma}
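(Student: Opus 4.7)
The plan is to treat both inequalities by a common scheme. First I would apply Cauchy--Schwarz to reduce $\|\int_t^{t+\tau} v_s\,\rmd s\|^2 \le \tau \int_t^{t+\tau}\|v_s\|^2\,\rmd s$, then split $\|v_s\|^2$ by Young's inequality into pieces that can be controlled separately using Lipschitz regularity of the relevant (sub)gradients and the integrated SDE representations of $X_s$ and $Y_s$. Stationarity of $\pi_\lambda$ together with the assumed moment bound $\bbE[\|A_{\tup{PD}}(Z)\|^2]\le D_4$ will make all drift integrals uniformly $\calO((s-t)^2 D_4)$, sending them into the $\calO(\tau^4)$ remainder, while the Brownian parts will supply the leading $\tau^3$ terms.

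For the first inequality, the crucial algebraic observation is that $\sqrt{2\tau}\,\xi^{n+1} = \sqrt{2}(W_{t+\tau}-W_t)$, so the SDE \eqref{eq:ct-pd-diffusion-jointvar} gives
$$X_s - (X_{t+\tau} - \sqrt{2\tau}\xi^{n+1}) = \sqrt{2}(W_s - W_t) + \int_s^{t+\tau} A_{\tup{PD}}^{(1)}(Z_r)\,\rmd r.$$
Since $g \in C^1$, $p'$ is necessarily $\nabla g(X_{t+\tau}-\sqrt{2\tau}\xi^{n+1})$, and $L_{\nabla g}$-Lipschitzness of $\nabla g$ combined with a single Young splitting and the Brownian moment $\bbE[\|W_s-W_t\|^2]=d(s-t)$ gives $\bbE[\|p_s-p'\|^2]\le L_{\nabla g}^2(4d(s-t) + 2(t+\tau-s)^2 D_4)$. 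The $Y$-piece is handled by integrating the dual SDE to obtain $Y_{t+\tau}-Y_s = -\int_s^{t+\tau} A_{\tup{PD}}^{(2)}(Z_r)\,\rmd r$, so $\bbE[\|Y_s-Y_{t+\tau}\|^2]\le(t+\tau-s)^2 D_4 = \calO(\tau^2)$, which after the outer $\tau\int\rmd s$ lands in $\calO(\tau^4)$. Collecting with the factor $2$ from the Young splitting of $\|v_s\|^2$, integrating in $s$, and multiplying by the outer $\tau$ should produce precisely the leading $4L_{\nabla g}^2 d\,\tau^3$.

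For the second inequality, $\omega_f$-strong convexity of $f$ makes $\nabla f^\ast$ Lipschitz with constant $\omega_f^{-1}$, so $\bbE[\|q_s-q_{t+\tau}\|^2]\le\omega_f^{-2}(t+\tau-s)^2 D_4$ contributes only $\calO(\tau^4)$. The dominant term therefore comes from $L^2 \|X_s - X_{t,\theta}\|^2$, for which I would split $X_s - X_{t,\theta} = (X_s-X_t) - \theta(X_t-X_{t-\tau})$ and use the primal SDE on the two disjoint intervals $[t,s]$ and $[t-\tau,t]$. The Brownian moments yield $\bbE[\|X_s-X_t\|^2]\le 4d(s-t) + \calO(\tau^2)$ and $\bbE[\|X_t-X_{t-\tau}\|^2]\le 4d\tau + \calO(\tau^2)$, and the identities $\int_t^{t+\tau}(s-t)\,\rmd s = \tau^2/2$ versus $\int_t^{t+\tau}\tau\,\rmd s = \tau^2$ are precisely what assembles the combination $(1+2\theta^2)$ at leading order, after which the outer $\tau$ from Cauchy--Schwarz produces the stated $\calO(L^2(1+2\theta^2)d\,\tau^3)$ behaviour.

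The main obstacle will be bookkeeping rather than any conceptual difficulty: one must track every factor of $2$ from repeated Young splittings so the stated leading constants emerge correctly, and verify that each remainder term arising from a drift integral is absorbed uniformly into $\calO(\tau^4)$ across $s\in[t,t+\tau]$. Stationarity of $\pi_\lambda$ enters decisively here, since $\bbE[\|A_{\tup{PD}}(Z_r)\|^2]\le D_4$ must hold uniformly over $r\in[t-\tau,t+\tau]$; only through this uniform moment bound can the drift integrals on $[t,s]$, $[s,t+\tau]$, and $[t-\tau,t]$ all be controlled by the single constant $D_4$ via Jensen's inequality.
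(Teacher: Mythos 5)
Your proposal follows the same route as the paper: Cauchy--Schwarz to pull out $\tau$, a Young split into drift-Lipschitz and operator-norm pieces, the identity $\sqrt{2\tau}\xi = \sqrt{2}(W_{t+\tau}-W_t)$ to cancel the Brownian increment in the argument of $p'$, the integrated SDE representations on the relevant subintervals, the moment bound $\bbE[\|A_{\tup{PD}}(Z)\|^2]\le D_4$ from stationarity to push all drift integrals into $\calO(\tau^4)$, and the $\omega_f^{-1}$-Lipschitzness of $\nabla f^\ast$ to kill the $q$-terms. The first inequality, including the leading constant $4L_{\nabla g}^2 d$, comes out exactly as in the paper.

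There is one subtlety in the second inequality that you should flag: the split $X_s - X_{t,\theta} = (X_s - X_t) - \theta(X_t - X_{t-\tau})$ followed by $\|u-\theta w\|^2 \le 2\|u\|^2 + 2\theta^2\|w\|^2$ produces, after plugging in $\bbE[\|X_s-X_t\|^2]\lesssim 4d(s-t)$ and $\bbE[\|X_t-X_{t-\tau}\|^2]\lesssim 4d\tau$, a leading constant of $8L^2 d(1+2\theta^2)\tau^3$ rather than the stated $6L^2 d(1+2\theta^2)\tau^3$. The paper instead splits $X_s - X_{t,\theta}$ into \emph{three} pieces -- the two drift integrals $-\int_t^s(p_r+K^TY_r)\rmd r$ and $\theta\int_{t-\tau}^t(p_r+K^TY_r)\rmd r$, plus the \emph{combined} Brownian term $\sqrt{2}(W_s-W_t) - \sqrt{2}\theta(W_t-W_{t-\tau})$ -- and applies $\|a+b+c\|^2 \le 3(\|a\|^2+\|b\|^2+\|c\|^2)$; the independence of the two disjoint Brownian increments then gives $\bbE[\|\text{Brownian}\|^2] = 2d(s-t)+2\theta^2 d\tau$ with no cross-term loss, which is exactly how the constant $6$ arises. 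Your own bundling couples each drift integral with its Brownian increment before applying Young, which costs a factor. Since you hedge with $\calO(L^2(1+2\theta^2)d\tau^3)$ you are not strictly wrong about the order, but to reproduce the stated leading constant you would need to regroup before applying the quadratic splitting inequality.
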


\begin{proof}
    We start with the first inequality. Applying first the Cauchy-Schwarz inequality and then the definitions of matrix norm and Lipschitz continuity, we can estimate
    \begin{align}
        &\bbE \left[\bigg\lVert \int_t^{t+\tau} (p_s - p' - K^T( Y_s -  Y_{t+\tau})) \rmd s \bigg\rVert^2 \right] \notag \\
        &\le 2\tau \int_t^{t+\tau} \bbE \left[\lVert p_s - p'\rVert^2\right] \rmd s +  2\tau \int_t^{t+\tau} \bbE \left[\lVert K^T( Y_s -  Y_{t+\tau}) \rVert^2\right] \rmd s \notag \\
        &\le 2\tau L_{\nabla g}^2 \int_t^{t+\tau} \bbE \left[\lVert X_s - X_{t+\tau} + \sqrt{2\tau}\xi \rVert^2\right] \rmd s +  2\tau L^2 \int_t^{t+\tau} \bbE \left[\lVert Y_s -  Y_{t+\tau} \rVert^2\right] \rmd s \label{eq:lemma-discretization-error-estimate1}
    \end{align}
    where $L_{\nabla g}$ is the Lipschitz constant of $\nabla g$ and $L = \norm{K}$. We now plug in the SDE, integrated over $[s,t+\tau]$, for both $X_{t+\tau}$ and $Y_{t+\tau}$ and obtain
    \begin{align*}
        X_s - X_{t+\tau} + \sqrt{2\tau}\xi &= \int_{s}^{t+\tau} (p_r + K^T Y_r) \rmd r - \sqrt{2} \int_t^{s} \mathrm d W_r\\
        Y_s - Y_{t+\tau} &= \int_{s}^{t+\tau} (q_r - K X_r) \rmd r.
    \end{align*}
    The continuous time process is stationary in distribution at $(X_r,Y_r) \sim \pi_\lambda$ for all $r>0$. Hence, by assumption we have $\bbE \left[ \norm{p_r + K^TY_r}^2 \right] \le M$ and $\bbE \left[ \norm{q_r - KX_r}^2 \right] \le M$. We can therefore estimate for all $t\le s \le t+\tau$:
    \begin{align}
        \bbE \left[\lVert X_s - X_{t+\tau} + \sqrt{2\tau}\xi \rVert^2\right] &\le 2 \bbE \left[ \bigg\lVert \int_{s}^{t+\tau} (p_r + K^T Y_r) \rmd r \bigg\rVert^2 \right] + 4 \bbE \left[ \bigg\lVert \int_t^s \mathrm d W_r \bigg\rVert^2 \right] \notag \\
        &\le 2 (t+\tau-s) \int_{s}^{t+\tau} \bbE \left[\norm{p_r + K^T Y_r}^2 \right] \rmd r + 4 d(s-t) \notag \\
        &\le 2 D_4 (t+\tau-s)^2 + 4 d(s-t) \notag \\
        \bbE \left[\lVert Y_s -  Y_{t+\tau} \rVert^2\right] &=  \bbE \left[ \bigg\lVert \int_{s}^{t+\tau} (q_r - KX_r) \rmd r \bigg\rVert^2 \right] \notag \\
        &\le D_4 (t+\tau-s)^2 \label{eq:lemma-discretization-error-estimate3}
    \end{align}
    Plugging these two bounds into \eqref{eq:lemma-discretization-error-estimate1} and integrating gives the desired bound
    \begin{equation*}
        \bbE \left[\bigg\lVert \int_t^{t+\tau} (p_s - p' - K^T( Y_s -  Y_{t+\tau})) \rmd s \bigg\rVert^2 \right] \le 4 L_{\nabla g}^2 d \tau^3 + \frac{2D_4 (2 L_{\nabla g}^2 + L^2)}{3} \tau^4.
    \end{equation*}
    The bound for the discretization error in dual space can be derived using similar estimates
    \begin{align}
        &\bbE \left[\bigg\lVert \int_t^{t+\tau} (q_s - q_{t+\tau} + K(X_s - X_{t,\theta})) \rmd s \bigg\rVert^2 \right] \notag \\
        &\le 2\tau \int_t^{t+\tau} \bbE \left[\lVert q_s - q_{t+\tau} \rVert^2\right] \rmd s +  \int_t^{t+\tau} \bbE \left[\lVert K(X_s - X_{t,\theta}) \rVert^2\right] \rmd s \notag \\
        &\le \frac{2\tau }{\omega_f^2} \int_t^{t+\tau} \bbE \left[\lVert Y_s - Y_{t+\tau} \rVert^2\right] \rmd s + 2\tau L^2 \int_t^{t+\tau} \bbE \left[\lVert X_s - X_{t,\theta} \rVert^2\right] \rmd s \label{eq:lemma-discretization-error-estimate2}
    \end{align}
    If $n\ge 1$, the term in the second integral can be bounded using 
    \begin{align*}
        X_s - X_{t,\theta} &= X_s - X_t - \theta (X_t - X_{t-\tau}) \\
        &= - \int_{t}^{s} (p_r + K^T Y_r) \rmd r + \sqrt{2} \int_{t}^s \mathrm d W_r + \theta \int_{t-\tau}^{t} (p_r + K^T Y_r) \rmd r - \sqrt{2} \theta \int_{t-\tau}^t \mathrm d W_r.
    \end{align*}
    In the boundary case $n=0$ implying $t = 0$, we have $X_{t-\tau} = X_t = X_0$ and the last two integrals can be dropped. In either case, we have the following bound
    \begin{align*}
        \bbE \left[\lVert X_s - X_{t,\theta} \rVert^2\right] &\le 3 \bbE \left[ \bigg\lVert \int_{t}^{s} (p_r + K^T Y_r) \rmd r \bigg\rVert^2 \right] + 3 \theta^2 \bbE \left[ \bigg\lVert \int_{t-\tau}^{t} (p_r + K^T Y_r) \rmd r \bigg\rVert^2 \right] \\
        &\qquad + 6 \bbE \left[ \bigg\lVert \int_t^s \mathrm d W_r - \theta \int_{t-\tau}^t \mathrm d W_r  \bigg\rVert^2 \right]\\
        &\le 3 D_4 ((s-t)^2 + \theta^2 \tau^2) + 6 d ((s-t)+\theta^2 \tau).
    \end{align*}
    Inserting the last inequality and \eqref{eq:lemma-discretization-error-estimate3} into \eqref{eq:lemma-discretization-error-estimate2} gives the error bound
    \begin{equation*}
        \bbE \left[\bigg\lVert \int_t^{t+\tau} (q_s - q_{t+\tau} + K(X_s - X_{t,\theta})) \rmd s \bigg\rVert^2 \right] \le 6 L^2 d (1+2\theta^2) \tau^3 + \left(2 D_4 L^2 (1 + 3\theta^2) + 2 D_4 \omega_f^{-2}\right) \tau^4.
    \end{equation*}
\end{proof}

\begin{remark}
    The assumption of \cref{lem:discretization-error-bound} that $\norm{A_{\tup{PD}}(Z_t)}^2$ is bounded in expectation is a standard assumption on the drift in Langevin sampling algorithms. In the case of overdamped Langevin diffusion, it is satisfied if the drift coefficient, i.e. $\nabla h$ in \eqref{eq:Langevin_diffusion_gradient}, is assumed to be Lipschitz-continuous, see Lemma 2 of \cite{Dalalyan2017a}. 
    
    In the case of primal-dual diffusion, this condition puts growth requirements on $\nabla g$ and $\nabla f^\ast$ instead. If, e.g., there are constants $\alpha,\beta$ such that $ \norm{\nabla g(x)} \le \alpha + \beta \norm{x} $ and $\norm{\nabla f^\ast(y)} \le \alpha + \beta \norm{y}$ for all $x\in \calX $ and $y \in \calY$, then the assumption is trivially satisfied. A further characterization of the stationary state $\pi_\lambda$ could help to replace this condition with more relaxed assumptions on $f$ and $g$.
\end{remark}

The auxiliary inequality \cref{lem:primal-dual-ineq-discr-error} and the bound on the discretization error in \cref{lem:discretization-error-bound} allow us to give the following convergence result.
\begin{theorem}\label{thm:convergence-to-pi-lambda}
    Let the assumptions of \cref{lem:discretization-error-bound} be satisfied and assume further that $g, f^\ast$ are $\omega_g$- and $\omega_{f^\ast}$-strongly convex, respectively. Let the step sizes $\sigma,\tau$ and the extrapolation parameter $\theta$ be chosen such that $\theta \sigma \tau L^2 \le 1$ and 
    $$\max\{(1+\omega_g \tau)^{-1},(1+\omega_{f^\ast}\sigma)^{-1}\} \le \theta < 1.$$
    Let the iterates $(X^n,Y^n)$ be generated by \eqref{eq:pd_sampling_update}. Denoting $\mu^n$ the marginal distribution of $X^n$, we have
    \begin{equation*}
        W_2(\mu^N, \mu_\lambda) \le \theta^{N/2} \calT_{1,\bar \lambda^{-1}}(\pi^0,\pi_\lambda) + C_2(\tau),
    \end{equation*}
    and further, if $\theta \sigma \tau L^2 < 1$ for the distribution $\pi^n$ of the joint variable $(X^n,Y^n)$ it holds
    \begin{equation*}
        \calT_{1,\bar{\lambda}}(\pi^N, \pi_\lambda) \le \frac{\theta^{N/2}}{(1-\theta \sigma \tau L^2)^{1/2}} \calT_{1,\bar \lambda^{-1}}(\pi^0,\pi_\lambda) + \frac{C_2(\tau)}{(1-\theta \sigma \tau L^2)^{1/2}},
    \end{equation*}
    with constants given by 
    $$\bar \lambda \coloneqq \frac{1+\omega_g \tau}{1+\omega_{f^\ast}\sigma}\lambda, \quad C_2(\tau) \coloneqq \frac{(4 d L_{\nabla g}^2 \omega_g^{-1} + 6 d L^2 (1+2\theta^2)\omega_{f^\ast}^{-1})^{1/2}}{(1-\theta)^{1/2}} \tau + \calO(\tau^{3/2}).$$
\end{theorem}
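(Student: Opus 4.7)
The plan is to couple the discrete iterates $(X^n,Y^n)$ with a continuous time process $(\tilde X_t,\tilde Y_t)$ initialized at the stationary distribution $\pi_\lambda$ via the shared driving Brownian motion as in \eqref{eq:coupling-ulpda-SDE}. Since $\tilde Z_t \sim \pi_\lambda$ for every $t\ge 0$, controlling the distance between $Z^n$ and $\tilde Z_{n\tau}$ directly yields a bound between $\pi^n$ and $\pi_\lambda$. The starting point is \cref{lem:primal-dual-ineq-discr-error}, which reproduces the one-step descent inequality of \cref{lem:primal-dual-ineq} but with two extra residual terms $I_1, I_2$ given by integrals of the continuous drift mismatches over $[t,t+\tau]$.

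I then invoke strong convexity of $g$ and $f^\ast$ to lower bound the two symmetric Bregman distances on the left by $\omega_g\norm{X^{n+1}-\tilde X_{t+\tau}}^2$ and $\omega_{f^\ast}\norm{Y^{n+1}-\tilde Y_{t+\tau}}^2$. To absorb the residual terms without leaving the quadratic framework, I apply Young's inequality on the two extra inner products in the form $2\inpro{a}{b}\le \omega\norm{a}^2 + \omega^{-1}\norm{b}^2$, using weight $\omega_g$ on the primal residual and $\omega_{f^\ast}$ on the dual residual. This consumes one copy of each strong convexity bonus, so the coefficients on the LHS reduce from $2\omega_g$ and $2\omega_{f^\ast}$ (as in \cref{thm:dt-contraction-strongly-convex}) to just $\omega_g$ and $\omega_{f^\ast}$, which is precisely why the effective ratio in the theorem becomes $\bar\lambda=\frac{1+\omega_g\tau}{1+\omega_{f^\ast}\sigma}\lambda$ rather than the $\tilde\lambda$ of \cref{thm:dt-contraction-strongly-convex}.

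Defining the Lyapunov-type quantity
\begin{align*}
\Delta^n &\coloneqq \norm{X^n-\tilde X_{n\tau}}_{\tau^{-1}+\omega_g}^2 + \norm{X^n-\tilde X_{n\tau}-X^{n-1}+\tilde X_{(n-1)\tau}}_{\tau^{-1}}^2 + \norm{Y^n-\tilde Y_{n\tau}}_{\sigma^{-1}+\omega_{f^\ast}}^2 \\
&\qquad + 2\inpro{K(X^n-\tilde X_{n\tau}-X^{n-1}+\tilde X_{(n-1)\tau})}{Y^n-\tilde Y_{n\tau}},
\end{align*}
the lower bound on $\theta$ allows to fold the unextrapolated norms into the extrapolated ones, exactly as in the proof of \cref{thm:dt-contraction-strongly-convex}, and yields the one-step recursion $\bbE[\Delta^{n+1}]\le \theta\,\bbE[\Delta^n] + R(\tau)$, where $R(\tau)=\omega_g^{-1}\tau^{-2}\bbE\norm{I_1}^2 + \omega_{f^\ast}^{-1}\tau^{-2}\bbE\norm{I_2}^2$. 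The assumptions carry over so that \cref{lem:discretization-error-bound} applies and gives $R(\tau) \le (4dL_{\nabla g}^2\omega_g^{-1} + 6dL^2(1+2\theta^2)\omega_{f^\ast}^{-1})\tau + \calO(\tau^2)$.

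Iterating over $n=0,\dots,N-1$ and summing the geometric series yields $\bbE[\Delta^N] \le \theta^N \bbE[\Delta^0] + R(\tau)/(1-\theta)$. Applying $\sqrt{a+b}\le \sqrt a+\sqrt b$ and lower-bounding $\Delta^N$ either by the weighted Wasserstein distance in the primal marginal (for the first claim) or, after Young's inequality on the bilinear $K$-term with weight $\tau L^2/\theta$, by $(1-\theta\sigma\tau L^2)$ times the joint transport distance $\calT_{1,\bar\lambda^{-1}}^2$ (for the second claim) gives the stated bounds, once I take the infimum over couplings of the initial distributions $\pi^0$ and $\pi_\lambda$. The main delicate point is the calibration of the Young's inequality weights in step two: choosing them too aggressively would not leave enough strong convexity on the diagonal to obtain a geometric contraction, while choosing them too conservatively would inflate $C_2(\tau)$ beyond the stated constant.
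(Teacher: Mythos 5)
Your proposal follows the paper's proof essentially step for step: the same coupling \eqref{eq:coupling-ulpda-SDE}, the same use of \cref{lem:primal-dual-ineq-discr-error} with the Young's-inequality split (weight $\omega_g$, $\omega_{f^\ast}$) to absorb the residual integrals at the cost of halving the strong-convexity surplus from $2\omega_g,2\omega_{f^\ast}$ to $\omega_g,\omega_{f^\ast}$ (correctly identifying this as the source of $\bar\lambda$), the same Lyapunov quantity $\Delta^n$, the same geometric iteration with \cref{lem:discretization-error-bound}, and the same final lower bound by a weighted transport distance. The only small slips are cosmetic: the bilinear $K$-term in $\Delta^N$ must be discarded via Young's inequality also to obtain the first (marginal) claim, not only the joint one, and the Young's weight needed to produce exactly the coefficient $(1-\theta\sigma\tau L^2)$ on the $\norm{\cdot}_{\sigma^{-1}+\omega_{f^\ast}}$-norm is $c=\tau L^2$ together with $\theta\ge(1+\omega_{f^\ast}\sigma)^{-1}$, rather than $c=\tau L^2/\theta$.
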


\begin{proof}
Let $(X^n,Y^n)$ be coupled with a process $\tilde Z_t = (\tilde X_t, \tilde Y_t)$, $t\ge 0$ by means of \eqref{eq:coupling-ulpda-SDE}. Let $n \in \bbN_0$ be fixed and $t = n\tau$. Then
\begin{align*}
    \frac{2}{\tau} &\inpro{X^{n+1} - \tilde X_{t+\tau}}{\int_t^{t+\tau} (\tilde p_s - p' + K^T(\tilde Y_s - \tilde Y_{t+\tau})) \rmd s}\\
    &\le \omega_g \norm{X^{n+1} - \tilde X_{t+\tau}}^2 + \frac{1}{\omega_g \tau^2} \bigg\lVert \int_t^{t+\tau} (\tilde p_s - p' + K^T(\tilde Y_s - \tilde Y_{t+\tau})) \rmd s \bigg\rVert^2,
\end{align*}
as well as
\begin{align*}
    \frac{2}{\tau} &\inpro{Y^{n+1} - \tilde Y_{t+\tau}}{\int_t^{t+\tau} (\tilde q_s - \tilde q_{t+\tau} - K(\tilde X_s - \tilde X_{t,\theta})) \rmd s}\\
    &\le \omega_{f^\ast} \norm{Y^{n+1} - \tilde Y_{t+\tau}}^2 + \frac{1}{\omega_{f^\ast}\tau^2} \bigg\lVert \int_t^{t+\tau} (\tilde q_s - \tilde q_{t+\tau} - K(\tilde X_s - \tilde X_{t,\theta})) \rmd s \bigg\rVert^2.
\end{align*}
We now employ the auxiliary inequality \cref{lem:primal-dual-ineq-discr-error}. Inserting the quadratic lower bounds for the symmetric Bregman distances and the two bounds above, we obtain the inequality
\begin{align*}
    &\norm{X^{n+1}-\tilde X_{t+\tau}}_{\tau^{-1}+\omega_g}^2 +\norm{X^{n+1}-\tilde X_{t+\tau}-X^n+\tilde X_t}_{\tau^{-1}}^2 + \norm{Y^{n+1}-\tilde Y_{t+\tau}}_{\sigma^{-1}+\omega_{f^\ast}}^2 \\
    &\quad + 2\inpro{K(X^{n+1}-\tilde X_{t+\tau}-X^n+\tilde X_t)}{Y^{n+1}-\tilde Y_{t+\tau}} \\
    &\le \norm{X^n-\tilde X_t}_{\tau^{-1}}^2 + \theta \norm{X^n-\tilde X_t-X^{n-1}+\tilde X_{t-\tau}}_{\tau^{-1}}^2 + \norm{Y^n - \tilde Y_t}_{\sigma^{-1}}^2\\
    &\quad + 2\theta \inpro{K(X^n-\tilde X_t - X^{n-1}+\tilde X_{t-\tau})}{Y^n-\tilde Y_t}\\
    &\quad + \underbrace{\frac{1}{\omega_g \tau^2} \bigg\lVert \int_t^{t+\tau} (\tilde p_s - p' + K^T(\tilde Y_s - \tilde Y_{t+\tau})) \rmd s \bigg\rVert^2}_{\eqqcolon R_n^{(1)}} + \underbrace{\frac{1}{\omega_{f^\ast} \tau^2} \bigg\lVert \int_t^{t+\tau} (\tilde q_s - \tilde q_{t+\tau} - K(\tilde X_s - \tilde X_{t,\theta})) \rmd s \bigg\rVert^2}_{\eqqcolon R_n^{(2)}},
\end{align*}
We now define 
\begin{align*}
    \Delta^n &\coloneqq \norm{X^n-\tilde X_t}_{\tau^{-1}+\omega_g}^2 +\norm{X^n-\tilde X_t-X^{n-1}+\tilde X_{t-\tau}}_{\tau^{-1}}^2 + \norm{Y^n - \tilde Y_t}_{\sigma^{-1}+\omega_{f^\ast}}^2\\
    &\quad + 2 \inpro{K(X^n-\tilde X_t - X^{n-1}+\tilde X_{t-\tau})}{Y^n-\tilde Y_t}.
\end{align*}
Given the assumptions on $\theta$ and the above inequality, it holds
\begin{align*}
    \Delta^{n+1} &\le \norm{X^n-\tilde X_t}_{\tau^{-1}}^2 + \theta \norm{X^n-\tilde X_t-X^{n-1}+\tilde X_{t-\tau}}_{\tau^{-1}}^2 + \norm{Y^n - \tilde Y_t}_{\sigma^{-1}}^2 \\
    &\quad + 2 \theta \inpro{K(X^n-\tilde X_t - X^{n-1}+\tilde X_{t-\tau})}{Y^n-\tilde Y_t} + R_n^{(1)} + R_n^{(2)} \\
    &\le \theta \Delta^n + R_n^{(1)} + R_n^{(2)}
\end{align*}
Iterating this inequality over $n=0,\dots,N-1$ we obtain
\begin{align*}
    \Delta^N \le \theta^N \Delta^0 + \sum_{n=0}^{N-1} \theta^n (R_{N-n-1}^{(1)} + R_{N-n-1}^{(2)})
\end{align*}
Denote $T = N\tau$. By lower bounding the remaining inner product in $\Delta^N$, this implies
\begin{align*}
    &\norm{X^{N}-\tilde X_T}_{\tau^{-1}+\omega_g}^2 + (1-\theta \tau \sigma L^2) \norm{Y^{N}-\tilde Y_T}_{\sigma^{-1}+\omega_{f^\ast}}^2 \\
    &\quad \le \theta^{N}\left(\norm{X^{0}-\tilde X^{0}}_{\tau^{-1}+\omega_g}^2 + \norm{Y^{0}-\tilde Y^{0}}_{\sigma^{-1}+\omega_{f^\ast}}^2 \right) + \sum_{n=0}^{N-1} \theta^n (R_{N-n-1}^{(1)} + R_{N-n-1}^{(2)})
\end{align*}
We now take expectations on both sides. The left side can be lower bounded by either $(\tau^{-1}+ \omega_g) \calW_2^2(\mu^N, \mu_\lambda)$ or, in the joint variable, by $(\tau^{-1}+\omega_g)(1-\theta\tau\sigma L^2) \calT^2_{1,\bar \lambda^{-1}}(\pi^N, \pi_\lambda)$. Since the initialization of the coupling was so far arbitrary (we only used that the marginal measures are $\pi_0$ and $\pi_\lambda$), we can choose the one that realizes the transport distance $\calT_{1,\bar \lambda^{-1}}$ and obtain
$$\bbE\left[\norm{X^{0}-\tilde X^{0}}_{\tau^{-1}+\omega_g}^2 + \norm{Y^{0}-\tilde Y^{0}}_{\sigma^{-1}+\omega_{f^\ast}}^2 \right] = (\tau^{-1}+\omega_g) \calT^2_{1,\bar \lambda^{-1}}(\pi^0, \pi_\lambda).$$
Furthermore, by \cref{lem:discretization-error-bound} we know that for all $n = 0,\dots,N-1$ it holds
\begin{equation*}
    \bbE \left[ R_{N-n-1}^{(1)} + R_{N-n-1}^{(2)} \right] \le \underbrace{(4 L_{\nabla g}^2 \omega_g^{-1} + 6 L^2 (1+2\theta^2)\omega_{f^\ast}^{-1})}_{\eqqcolon M} d \tau + \calO(\tau^2)
\end{equation*}
This means we can use $\sum_{n=0}^{N-1}\theta^n \le (1-\theta)^{-1}$ and after dividing by $ \tau^{-1} + \omega_g $ we obtain the two estimates
\begin{equation*}
    \calW_2^2(\mu^N, \mu_\lambda) \le \theta^N \calT^2_{1,\bar \lambda^{-1}}(\pi^0, \pi_\lambda) + \frac{M d}{1-\theta} \tau^2 + \calO(\tau^3)
\end{equation*}
and
\begin{equation*}
    \calT_{1,\bar \lambda^{-1}}^2(\pi^N, \pi_\lambda) \le \frac{\theta^N}{1-\theta \sigma \tau L^2} \calT^2_{1,\bar \lambda^{-1}}(\pi^0, \pi_\lambda) + \frac{M d}{(1-\theta \sigma \tau L^2)(1-\theta)} \tau^2 + \calO(\tau^3),
\end{equation*}
where we used that for $ (\tau + \calO(\tau^2))/(\tau^{-1}+\omega_g) = \tau^2 + \calO(\tau^3)$ for $\tau > 0$. The statement of the theorem hence follows by taking the square root and using $\sqrt{a+b} \le \sqrt{a} + \sqrt{b}$.
\end{proof}

In the last theorem, in the limit $N \to \infty$, the discrete time algorithm converges to its stationary distribution $\pi_{\lambda, \tau}$. From that, we immediately obtain the following bound between the invariant measures in discrete and continuous time.
\begin{corollary}\label{cor:distance-invariants-discrete-to-continuous}
    Let the assumptions of \cref{thm:convergence-to-pi-lambda} be satisfied. Then the following bounds hold:
    \begin{align*}
        \calW_2(\mu_{\lambda,\tau}, \mu_\lambda) &\le C_2(\tau) + \calO(\tau^{3/2}),\\
        \calT_{1,\bar \lambda^{-1}}(\pi_{\lambda,\tau},\pi_\lambda) &\le \frac{C_2(\tau)}{(1-\theta \sigma \tau L^2)^{1/2}} + \calO(\tau^{3/2}).
    \end{align*}
\end{corollary}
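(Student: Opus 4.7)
The plan is to deduce this corollary directly from Theorem~\ref{thm:convergence-to-pi-lambda} by exploiting the stationarity of $\pi_{\lambda,\tau}$ under the iteration \eqref{eq:pd_sampling_update}. Specifically, I will apply Theorem~\ref{thm:convergence-to-pi-lambda} with the particular initialization $\pi^0 = \pi_{\lambda,\tau}$. Existence of $\pi_{\lambda,\tau}$ is guaranteed by Theorem~\ref{thm:dt-contraction-strongly-convex}, and by definition of stationarity every iterate satisfies $\pi^N = \pi_{\lambda,\tau}$, and hence $\mu^N = \mu_{\lambda,\tau}$, for every $N \in \bbN$.

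Substituting these identities into the two bounds of Theorem~\ref{thm:convergence-to-pi-lambda} yields, for every $N$,
\begin{align*}
    \calW_2(\mu_{\lambda,\tau}, \mu_\lambda) &\le \theta^{N/2} \calT_{1,\bar\lambda^{-1}}(\pi_{\lambda,\tau}, \pi_\lambda) + C_2(\tau),\\
    \calT_{1,\bar\lambda^{-1}}(\pi_{\lambda,\tau}, \pi_\lambda) &\le \frac{\theta^{N/2}}{(1-\theta\sigma\tau L^2)^{1/2}}\,\calT_{1,\bar\lambda^{-1}}(\pi_{\lambda,\tau}, \pi_\lambda) + \frac{C_2(\tau)}{(1-\theta\sigma\tau L^2)^{1/2}}.
\end{align*}
The next step is to pass to the limit $N \to \infty$. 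Since $\theta < 1$ by assumption, the factor $\theta^{N/2}$ vanishes, and the first inequality yields $\calW_2(\mu_{\lambda,\tau}, \mu_\lambda) \le C_2(\tau)$. For the second inequality, the same vanishing lets me move the remaining $\calT_{1,\bar\lambda^{-1}}$ term to the left-hand side, giving $\calT_{1,\bar\lambda^{-1}}(\pi_{\lambda,\tau}, \pi_\lambda) \le C_2(\tau) / (1-\theta\sigma\tau L^2)^{1/2}$. The explicit $\calO(\tau^{3/2})$ remainder written in the corollary statement is already inherited from the corresponding expansion of $C_2(\tau)$ in Theorem~\ref{thm:convergence-to-pi-lambda}, so no further work on the remainder is needed.

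The one technical subtlety I anticipate is making the passage to the limit rigorous: the factor $\theta^{N/2}$ vanishes only if it multiplies a \emph{finite} quantity, so I have to verify that $\calT_{1,\bar\lambda^{-1}}(\pi_{\lambda,\tau},\pi_\lambda) < \infty$ to avoid an indeterminate $0 \cdot \infty$. This reduces to finiteness of the weighted second moments of both measures. Finiteness for $\pi_\lambda$ follows from Theorem~\ref{thm:convergence-pd-ct}: initialize the continuous dynamics at any finite-second-moment measure and invoke the $\calT_{1,\lambda^{-1}}$-contraction to the stationary state. For $\pi_{\lambda,\tau}$, the analogous argument using Theorem~\ref{thm:dt-contraction-strongly-convex} (contraction in $\calT_{1,\tilde\lambda^{-1}}$) provides the same conclusion, after noting the equivalence of the norms $\norm{\cdot}_{1,\bar\lambda^{-1}}$, $\norm{\cdot}_{1,\tilde\lambda^{-1}}$, and $\norm{\cdot}_{1,\lambda^{-1}}$ on the finite-dimensional space $\calZ$. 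Once these moment bounds are in place, the remainder of the argument is mechanical.
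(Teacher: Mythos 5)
Your proof is correct and takes essentially the same route as the paper, which derives the corollary by letting $N \to \infty$ in the bounds of \cref{thm:convergence-to-pi-lambda}. Your refinement of initializing the chain exactly at $\pi^0 = \pi_{\lambda,\tau}$ (so $\pi^N = \pi_{\lambda,\tau}$ for all $N$) is a slightly cleaner way to reach the same conclusion than the paper's implicit use of $\pi^N \to \pi_{\lambda,\tau}$, and you correctly identify the finite-second-moment requirement that the paper leaves tacit.
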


The result \cref{thm:convergence-to-pi-lambda} provides a bound between the samples produced by the algorithm and the corresponding stationary distribution in continuous time. While this result is typical in the Langevin algorithms literature, we here have to combine this with \cref{thm:ct-convergence-to-overdamped-langevin} in order to get a relation between samples and target distribution. The following result closes this final gap.
\begin{corollary}\label{cor:distance-samples-to-target}
    Let the assumptions of \cref{thm:ct-convergence-to-overdamped-langevin,thm:convergence-to-pi-lambda} be satisfied. Let $\pi^n$ be the distiribution of samples $(X^n,Y^n)$ generated by \eqref{eq:pd_sampling_update}. Then the $x$-marginal $\mu^n$ of $\pi^n$ obeys
    $$ \calW_2(\mu^N, \mu^\ast) \le \theta^{N/2} \calT_{1,\bar \lambda^{-1}}(\pi^0, \pi_\lambda) + C_1(\lambda) + C_2(\tau), $$
    with the constants $C_1, C_2$ from \cref{thm:ct-convergence-to-overdamped-langevin,thm:convergence-to-pi-lambda}, respectively, which decay asymptotically like
    \begin{align*}
        C_1(\lambda) &= D_5 \lambda^{-1/2} + \calO(\lambda^{-3/2}), \quad \textrm{as } \lambda \to \infty,\\
        C_2(\tau) &= D_6 \tau + \calO(\tau^{3/2}), \quad \textrm{as } \tau \to 0.
    \end{align*}
    with constants gathered in $D_5, D_6$. It immediately follows that $\calW_2(\mu_{\lambda,\tau}, \mu^\ast) \le C_1(\lambda) + C_2(\tau)$.
\end{corollary}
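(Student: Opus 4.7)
The plan is straightforward: the statement is essentially a triangle inequality combining the two main convergence results already proved, namely \cref{thm:ct-convergence-to-overdamped-langevin} (bounding the continuous-time primal-dual bias) and \cref{thm:convergence-to-pi-lambda} (bounding the discrete-to-continuous-time bias). No new coupling argument is needed.

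First I would split the desired Wasserstein distance using the triangle inequality,
\begin{equation*}
\calW_2(\mu^N, \mu^\ast) \;\le\; \calW_2(\mu^N, \mu_\lambda) \;+\; \calW_2(\mu_\lambda, \mu^\ast).
\end{equation*}
The first term is controlled by \cref{thm:convergence-to-pi-lambda}, which gives directly $\calW_2(\mu^N,\mu_\lambda) \le \theta^{N/2}\calT_{1,\bar\lambda^{-1}}(\pi^0,\pi_\lambda) + C_2(\tau)$, while the second term is precisely the statement in the second item of \cref{thm:ct-convergence-to-overdamped-langevin}, giving $\calW_2(\mu_\lambda,\mu^\ast) \le C_1(\lambda)$. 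Adding these yields the first claimed inequality. The only bookkeeping required is to verify that the hypotheses of both theorems are compatible (strong convexity of $g,f^\ast$ plus the step-size/extrapolation conditions of \cref{thm:convergence-to-pi-lambda}, together with the $C^3$ and moment assumptions on $f$ and $\mu^\ast$ from \cref{thm:ct-convergence-to-overdamped-langevin}).

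For the consequence on $\mu_{\lambda,\tau}$, I would take the limit $N\to\infty$ in the bound just derived. Since $\theta < 1$ by assumption, the initial-condition term $\theta^{N/2}\calT_{1,\bar\lambda^{-1}}(\pi^0,\pi_\lambda)$ vanishes, and by the discrete-time contraction established in \cref{thm:dt-contraction-strongly-convex}, the laws $\pi^N$ converge in $\calT_{1,\tilde\lambda^{-1}}$ to the stationary distribution $\pi_{\lambda,\tau}$, hence $\mu^N \to \mu_{\lambda,\tau}$ in $\calW_2$. Passing to the limit in the inequality (the right-hand side being independent of $N$ apart from the vanishing term) produces $\calW_2(\mu_{\lambda,\tau},\mu^\ast) \le C_1(\lambda) + C_2(\tau)$.

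The asymptotic expansions require no new work: they are just the expansions already recorded for $C_1$ in \cref{thm:ct-convergence-to-overdamped-langevin} (with $D_5 = (\omega_g/D_1)^{-1/2}$ absorbing the leading constant from $C_1(\lambda) = (\lambda\omega_g/D_1)^{-1/2} + \calO(\lambda^{-3/2})$) and for $C_2$ in \cref{thm:convergence-to-pi-lambda} (with $D_6 = ((4dL_{\nabla g}^2\omega_g^{-1} + 6dL^2(1+2\theta^2)\omega_{f^\ast}^{-1})/(1-\theta))^{1/2}$). There is no substantive obstacle here; the whole corollary is a clean assembly of previously established pieces, and the only subtlety is the passage to the limit $N\to\infty$ in the second statement, which is justified by the contraction in \cref{thm:dt-contraction-strongly-convex} together with lower semicontinuity of $\calW_2$ under weak convergence.
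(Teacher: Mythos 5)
Your proposal is correct and follows exactly the argument the paper intends: the corollary carries no written proof because it is the immediate triangle-inequality combination of the second part of \cref{thm:ct-convergence-to-overdamped-langevin} with \cref{thm:convergence-to-pi-lambda}, together with the limit $N\to\infty$ using the contraction of \cref{thm:dt-contraction-strongly-convex}. Your bookkeeping of $D_5$ and $D_6$ from the expansions of $C_1$ and $C_2$ is also correct, and your remark that the $\theta$-condition in \cref{thm:convergence-to-pi-lambda} is stronger than the one in \cref{thm:dt-contraction-strongly-convex} (so the latter may indeed be invoked for the $N\to\infty$ step) covers the only point that required checking.
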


\begin{corollary}\label{cor:distances-modified-algorithm}
    Let $(X^n,Y^n)$ be generated by the modified iteration \eqref{eq:pd_sampling_update_xi_in_prox}, where the stochastic term $\xi$ is added in the primal variable before applying the proximal mapping. Under the same assumptions, the bounds of \cref{thm:convergence-to-pi-lambda} and \cref{cor:distance-invariants-discrete-to-continuous,cor:distance-samples-to-target} hold unchanged.
\end{corollary}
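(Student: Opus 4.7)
The plan is to propagate the modification established in \cref{cor:stability-modified-algorithm} through the discretization-error analysis. Since the proof of \cref{thm:convergence-to-pi-lambda} rests on exactly two building blocks---the auxiliary inequality \cref{lem:primal-dual-ineq-discr-error} and the discretization-error bound \cref{lem:discretization-error-bound}---it suffices to check that each continues to hold for the modified iteration \eqref{eq:pd_sampling_update_xi_in_prox} with the same functional form and asymptotic order. The statements of \cref{cor:distance-invariants-discrete-to-continuous,cor:distance-samples-to-target} follow from \cref{thm:convergence-to-pi-lambda} without using the precise form of the update and therefore require no further argument, provided $\pi_{\lambda,\tau}$ and $\mu^n$ are interpreted as the stationary and iterate distributions of the modified algorithm.

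First, I would re-derive the analogue of \cref{lem:primal-dual-ineq-discr-error} for the coupling of \eqref{eq:pd_sampling_update_xi_in_prox} with the continuous-time SDE \eqref{eq:ct-pd-diffusion-jointvar}. The change is precisely the one exploited in \cref{cor:stability-modified-algorithm}: the discrete primal subgradient becomes $p^{n+1} \in \partial g(X^{n+1})$, without the shift by $\sqrt{2\tau}\xi^{n+1}$, and correspondingly the auxiliary reference subgradient on the continuous side is chosen as $p' \in \partial g(\tilde X_{t+\tau})$. All algebraic manipulations in the proof then go through verbatim, yielding the same inequality but with the primal Bregman distance $D_{g,\textrm{sym}}^{p^{n+1},p'}$ evaluated directly at $(X^{n+1},\tilde X_{t+\tau})$. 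Strong convexity gives $D_{g,\textrm{sym}}^{p^{n+1},p'}(X^{n+1},\tilde X_{t+\tau}) \ge \omega_g \norm{X^{n+1}-\tilde X_{t+\tau}}^2$, exactly matching the lower bound used in \cref{thm:convergence-to-pi-lambda}.

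Second, I would revisit \cref{lem:discretization-error-bound}. Since $p' \in \partial g(\tilde X_{t+\tau})$, the Lipschitz estimate for $\tilde p_s - p'$ now controls $\bbE \norm{\tilde X_s - \tilde X_{t+\tau}}^2$ instead of $\bbE \norm{\tilde X_s - \tilde X_{t+\tau} + \sqrt{2\tau}\xi^{n+1}}^2$. Integrating the primal SDE from $s$ to $t+\tau$ gives $\tilde X_s - \tilde X_{t+\tau} = \int_s^{t+\tau}(\tilde p_r + K^T \tilde Y_r)\rmd r - \sqrt 2 \int_s^{t+\tau}\rmd W_r$, and a Cauchy--Schwarz estimate together with $\bbE \norm{A_{\tup{PD}}(\tilde Z_r)}^2 \le D_4$ and the It\^o isometry yields $\bbE \norm{\tilde X_s - \tilde X_{t+\tau}}^2 \le 2D_4(t+\tau-s)^2 + 4d(t+\tau-s)$. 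Integrating over $s \in [t,t+\tau]$ reproduces the leading-order term $4 L_{\nabla g}^2 d \tau^3$ from the original lemma. The dual discretization-error bound involves no reference-point shift and is therefore entirely unchanged.

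With these two ingredients available in the same form, the remainder of the proof of \cref{thm:convergence-to-pi-lambda}---the Young-inequality step absorbing the discretization errors, the telescoping in $n$, the Cauchy--Schwarz control of the residual cross term, and the passage to the transport distance by infimizing over initial couplings---is entirely independent of where in the primal update the stochastic increment is inserted. Consequently, the bounds of \cref{thm:convergence-to-pi-lambda,cor:distance-invariants-discrete-to-continuous,cor:distance-samples-to-target} carry over verbatim. The main obstacle I anticipate is the bookkeeping in the second step: one must verify that, despite losing the convenient cancellation between $\sqrt{2\tau}\xi^{n+1}$ inside the proximal argument and the Brownian integral in the SDE, the diffusive contribution still integrates to $O(\tau^3)$ and that the numerical constant in front of the leading term coincides with the one in the original lemma so that no rescaling of $C_2(\tau)$ is needed.
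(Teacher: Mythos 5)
Your proposal is correct and follows essentially the same route as the paper: shift both the discrete subgradient $p^{n+1} \in \partial g(X^{n+1})$ and the reference point $p' \in \partial g(\tilde X_{t+\tau})$ as in \cref{cor:stability-modified-algorithm}, observe that the algebra in \cref{lem:primal-dual-ineq-discr-error} then goes through verbatim, and re-estimate the primal discretization error in \cref{lem:discretization-error-bound} with $\bbE\lVert \tilde X_s - \tilde X_{t+\tau}\rVert^2$ in place of $\bbE\lVert \tilde X_s - \tilde X_{t+\tau} + \sqrt{2\tau}\xi\rVert^2$, which produces the same leading-order bound $4L_{\nabla g}^2 d\tau^3$. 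The paper's proof is a condensed version of exactly this argument, and your concern about the ``lost cancellation'' is not an issue since the Brownian contribution integrates to the same order either way.
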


\begin{proof}
    As in the proof of \cref{cor:stability-modified-algorithm}, we have to modify the proof of the auxiliary inequality \cref{lem:primal-dual-ineq-discr-error}. It holds $p^{n+1} \in \partial g(X^{n+1})$, and accordingly we now choose $p' \in \partial g(\tilde X_{t+\tau})$. Then, we start at the symmetric Bregman distance
    $$D_{g,\text{sym}}^{p^{n+1},p'}(X^{n+1}, \tilde X_{t+\tau}) = \inpro{U^{n+1}}{p^{n+1}-p'}.$$
    The subsequent computations in the proof of \cref{lem:primal-dual-ineq-discr-error} remain unchanged.
    
    In order to bound the discretization error, the statement of \cref{lem:discretization-error-bound} has to be changed to consider $p' \in \partial g(X_{t+\tau})$ (instead of $p' \in \partial g(X_{t+\tau}-\sqrt{2\tau}\xi)$). The error bound remains unchanged, the only difference in the proof appears in \eqref{eq:lemma-discretization-error-estimate1}, where the term under the integral is now not $\bbE \left[\lVert X_s - X_{t+\tau} + \sqrt{2\tau}\xi \rVert^2\right]$, but $\bbE \left[\lVert X_s - X_{t+\tau}\rVert^2\right]$. Carrying out the subsequent computations in the same way, we arrive again at the upper bound $4L_{\nabla g}^2 d \tau^3 + \calO(\tau^4)$ for the primal discretization error.
\end{proof}

\section{Numerical Examples}\label{sec:numerics}
In the following, we demonstrate our results on the primal-dual Langevin sampling scheme on several numerical examples, first small-scale ones to demonstrate the convergence behaviour in detail and subsequently larger imaging inverse problems. We always simulate \eqref{eq:ct-pd-diffusion-jointvar} in a time interval $[0,T]$ by running the discretized algorithm \eqref{eq:pd_sampling_update} with small step sizes $\tau,\sigma$ for $T/\tau$ steps. The step sizes are coupled via $\lambda \tau = \sigma$ and are chosen small enough so that $\tau\sigma L^2 \le 1$, i.e. the assumptions of \cref{thm:dt-stability-convex} are satisfied and the continuous-time dynamics are approximated well.

\subsection{Revisiting the Quadratic Case}
We start by illustrating the toy example of two one-dimensional quadratic potentials that was considered in \cref{thm:1d-gauss-example,thm:1d-gauss-no-homogeneous-correction}. Recall that the target distribution has density $\exp(-h(x))/Z$ with the potential $h$ defined as
$$ h(x) = f(kx) + g(x) = \frac{k^2}{2c_f} x^2 + \frac{1}{2c_g} x^2 = \frac{c_g k^2 + c_f}{2c_fc_g} x^2, $$
hence the target distribution is Gaussian $\mathrm{N}(0,c_fc_g(c_gk^2+c_f)^{-1})$. For the numerical simulation, we choose $c_f = 1, c_g = 2, k=1.5$.
We simulate the primal-dual Langevin dynamics \eqref{eq:ct-pd-diffusion} for different values of $\lambda$ by running \eqref{eq:pd_sampling_update} \eqref{eq:pd_sampling_update}. We set $\theta = 1$ and set the step sizes $\tau$ and $\sigma$ via the relations $\sigma/\tau = \lambda$ and $\sigma \tau k^2 = c$ where $c\le 1$ is required for stability. We set $c=10^{-4}$ in order to approximate closely the continuous time dynamics, resulting in step sizes $\tau = 10^{-2} \sqrt{\lambda} k$ and $\sigma = 10^{-2} \sqrt{\lambda ^{-1}} k$. \Cref{fig:1dgaussian-toy-joint-distributions} shows the joint distribution of primal and dual samples $(X^n,Y^n)$ produced by \eqref{eq:pd_sampling_update}. As $\lambda$ grows, the joint distribution of primal and dual variable concentrates along the linear subspace $y = kx/c_f$ and the primal marginal distribution converges to the target.

\begin{figure}[ht]
    \centering
    \begin{subfigure}{0.34\linewidth}%
        \includegraphics[width=\linewidth]{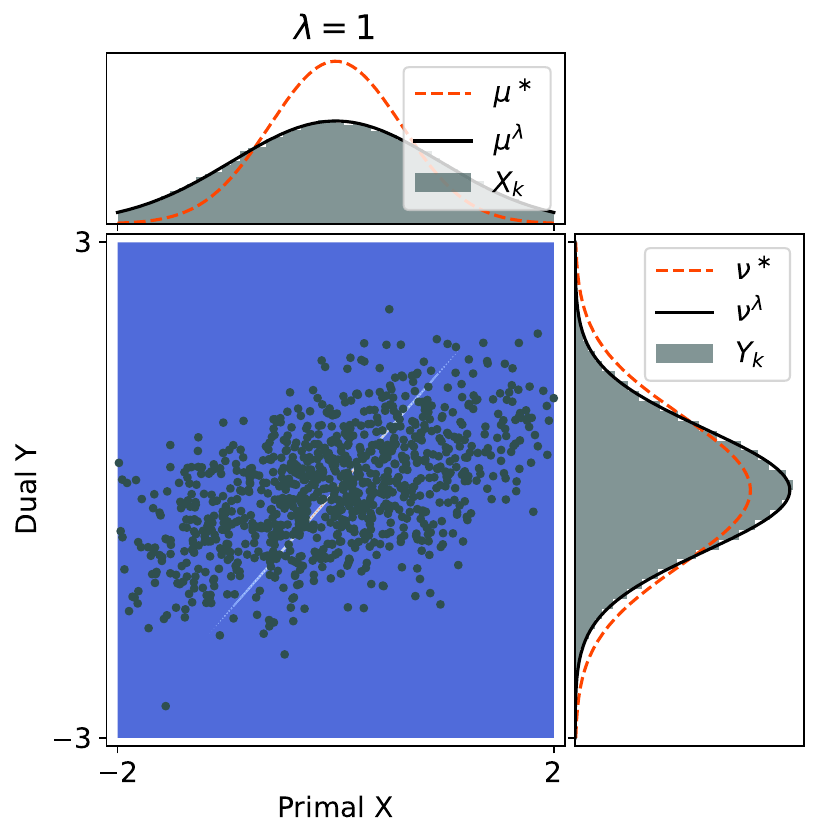}
    \end{subfigure}%
    \begin{subfigure}{0.325\linewidth}%
        \includegraphics[width=\linewidth]{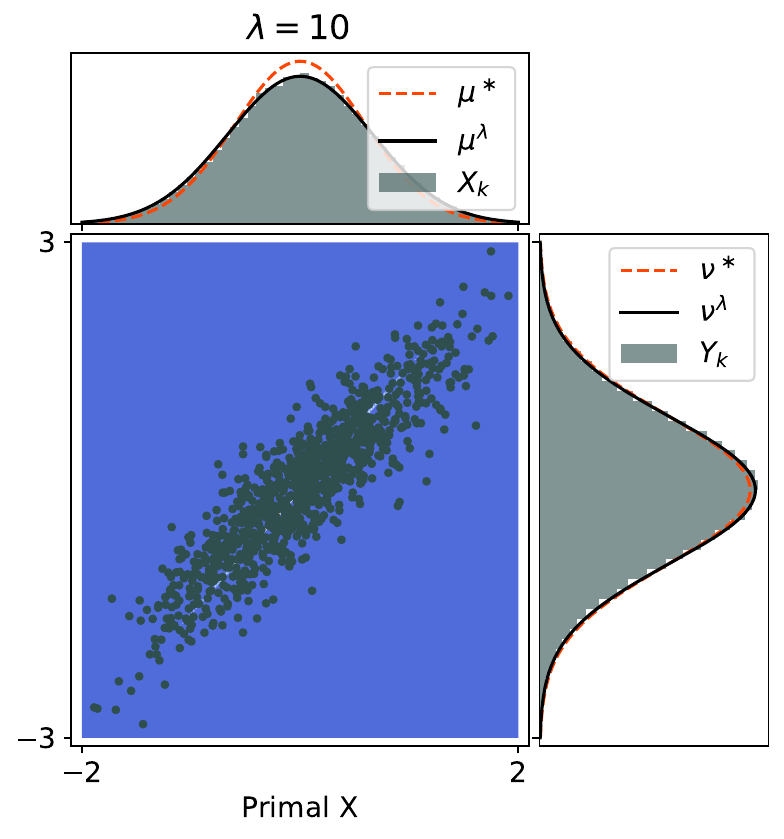}
    \end{subfigure}%
    \begin{subfigure}{0.325\linewidth}%
        \includegraphics[width=\linewidth]{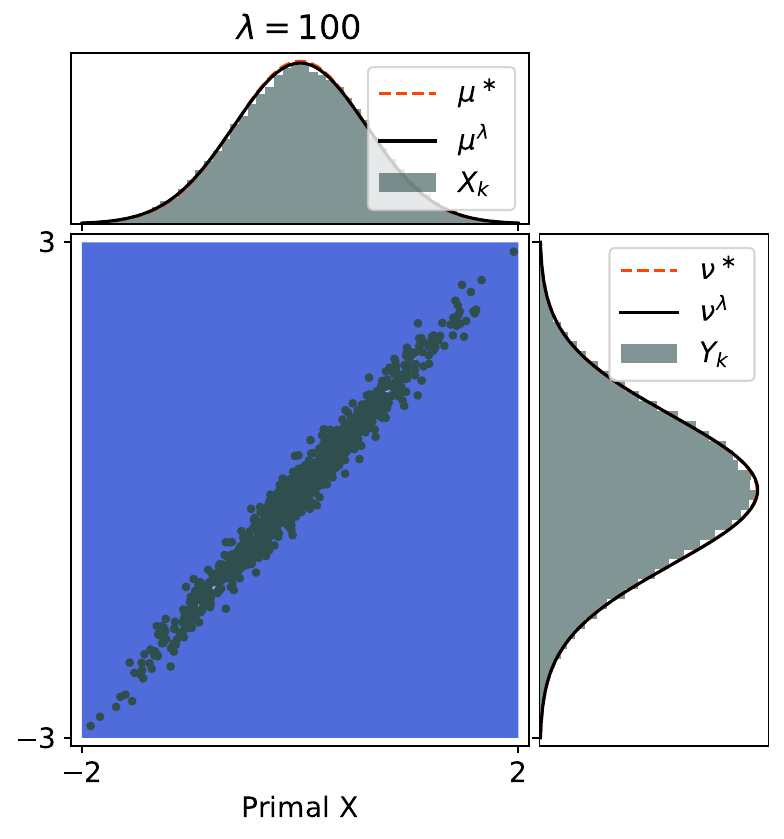}
    \end{subfigure}%
    \caption{Approximation of the target $\mu^\ast = \exp(-h)$ for a one-dimensional quadratic $h$ by \eqref{eq:pd_sampling_update} and its continuous time diffusion limit \eqref{eq:ct-pd-diffusion}. From left to right, results for $\lambda = 1$, $\lambda =10$ and $\lambda = 100$ are displayed. The center scatter plot shows pairs $(X^n,Y^n)$, the background color indicates the mass of the concentrated joint target $\pi^\ast$ (zero is blue, the mass is concentrated along the white line $y = kx/c_f$). The marginal boxes show the densities of the primal and dual targets $\mu^\ast$ and $\nu^\ast = (\partial f \circ K)_{\#}\mu^\ast$ (dashed red), the marginals $\mu^{\lambda}, \nu^{\lambda}$ of the invariant solution of primal-dual diffusion (black solid) and samples drawn using \eqref{eq:pd_sampling_update} (grey histograms). For increasing $\lambda$, the joint distribution of samples tends to concentrate and the primal marginal converges to the target, as proven in \cref{thm:ct-convergence-to-overdamped-langevin}.}
    \label{fig:1dgaussian-toy-joint-distributions}
\end{figure}

\subsection{TV Denoising from Gaussian Noise}
For the potential $f\circ K$ being is dualized in the algorithm, we now consider the total variation (TV) functional, a typical model-based prior in imaging inverse problems. The (isotropic) TV functional is defined by 
$$\TV(x) := \sum_{i,j} \sqrt{(\nabla_{h}x)_{i,j}^2 + (\nabla_{v}x)_{i,j}^2},$$
where $\nabla_h, \nabla_v$ denote horizontal and vertical finite differences in the pixelated image $x$. We consider a Gibbs prior on $x$ with density $\exp(-\alpha \TV(x))$ where $\alpha$ is a fixed regularization parameter (Note that this does only defines a probability distribution on a subspace orthogonal to constant functions since TV has a one-dimensional null space. With a log-likelihood that grows fast enough at infinity along this null space, the posterior is however well-defined.). The corresponding potential term can be written in the form $\alpha \TV(x) = \alpha \norm{Kx}_{2,1} = f(Kx)$ where $K = (\nabla_h,\nabla_v)$ and $f=\alpha \norm{\cdot}_{2,1}$ denotes a constant multiple of the nested $l_2$-$l_1$-norm (inner $l_2$ over vertical/horizontal component, outer $l_1$ over all pixels). We further assume that we make a noisy observation $\tilde x = x + \epsilon$ of $x$, where the noise is distributed like $\epsilon \sim \mathrm{N}(0,\sigma_\epsilon^2 I)$ for some observation variance $\sigma_\epsilon^2$. The corresponding negative log-likelihood is 
$$g(x) := l_{\tilde x}(x) = \frac{1}{2\sigma_\epsilon^2} \norm{x-\tilde x}_2^2,$$
which is $\sigma^{-2}$-strongly convex. We now want to sample from the posterior $\mu^\ast$, which by Bayes law has density proportional to $ \exp(-\alpha \norm{Kx}_{2,1} - l_{\tilde x}(x)) = \exp(-f(Kx)-g(x))$.

\subsubsection{Toy Example in Two Dimensions}
We start with a small-scale example in which we are able to compute accurate approximations of Wasserstein distances between the samples' distribution and the true posterior. Consider the artificial case where the `image' consists of $2\times 1$ pixels, i.e. $\calX = \bbR^2$. Then the operator $K$ reduces to $Kx = x_2-x_1 \in \bbR = \calY$ with $\norm{K} = \sqrt{2}$, and $f$ is given by $f(y) = \alpha \abs{y}$. The convex conjugate $f^\ast$ is the indicator function of the interval $[-\alpha,\alpha]$ and the proximal mapping $\prox_{\sigma f^\ast}$ is, for any $\sigma>0$, the projection onto $[-\alpha,\alpha]$.

The distribution $\mu_n$ of iterate $X^n$ in \eqref{eq:pd_sampling_update} is approximated by running finitely many, here $10^4$, independent Markov chains in parallel and using the resulting empirical distribution $\hat \mu_n$. In \cref{fig:l2tv-2d-scatterplots}, we show scatter plots of the samples at time $T$, where $T$ is chosen large enough that the empirical measures $\hat \mu_n$ remain stationary both qualitatively and in Wasserstein distance to the target (see below). As comparison, we show samples drawn using the algorithm Prox-Sub proposed in \cite{Habring2023}. Prox-Sub solves the optimality condition $y \in \partial f(Kx)$ exactly at every step (by assuming that the subgradient can be evaluated), and hence corresponds exactly to the concentrated limit $\lambda = \infty$ of \eqref{eq:pd_sampling_update} explored in \cref{subsec:convergence-to-ula}. The authors of \cite{Habring2023} showed the convergence of the distribution of Prox-Sub samples to the target in 2-Wasserstein distance, total variation distance and Kullback-Leibler divergence, where the bias decreases depends as usual on the step size of the algorithm.

\begin{figure}[ht]
    \centering
    \includegraphics[width=\linewidth]{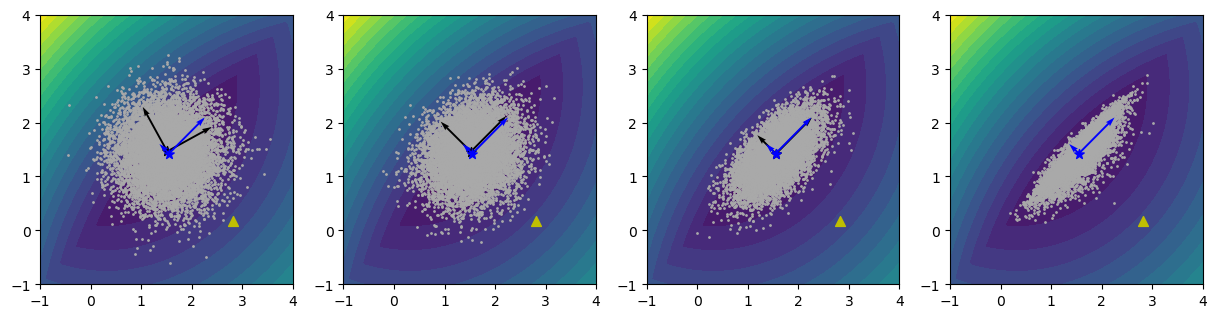}
    \caption{Scatter plots of $\hat \mu_n$ in the two-dimensional TV-denoising experiment. From left to right, the plots show approximations of the same posterior with values $\lambda = 10^1,10^2,10^3$ and `$\lambda = \infty$' (Prox-Sub). Horizontal and vertical axes correspond to the two coordinates in the primal variable $x \in \bbR^2$, the dual variable $y$ is not displayed. Grey dots show the samples at time $T$ produced by $10^4$ parallel chains, the black arrows indicate the samples' mean and the principal components of their covariance (i.e. eigenvectors of the covariance matrix scaled in length by the corresponding eigenvalues). For reference, the blue arrows are the principal components of the posterior that should be approximated. The noisy observation $\tilde x$ is marked by a yellow triangle and the background color is a contour plot of the posterior density $\mu^\ast(x)$.}
    \label{fig:l2tv-2d-scatterplots}
\end{figure}

Using numerical integration, in this low-dimensional example we are able to compute an estimate of the normalization constant $Z = \int_{\calX} \exp(-f(Kx)-g(x))\rmd x$. Using this, we can access an approximation of the posterior density $\mu^\ast(x)$. This allows us to compute approximations of the Wasserstein distances of current iterates to the target $\calW_2(\hat \mu_n,\mu^\ast)$ without the need to rely on a second sampler. We evaluate the distance at 20 equidistantly spaced points in $[0,T]$ (i.e. for $\hat \mu_{kN}, k=1,\dots,20$ where $N = T/20\tau$). The distances are computed using the Python optimal transport module \cite{Flamary2021} and are displayed in \cref{fig:l2tv-2d-wasserstein}. As expected, the samples drawn by the Prox-Sub algorithm are closest to the target distribution (their bias in Wasserstein distance is bounded by a constant multiple of the step size $\tau$). For finite $\lambda$, the distributions $\hat \mu_n$ exhibit an additional strong bias to the target due to the dualization, which becomes smaller as $\lambda$ grows.

\begin{figure}[ht]
    \centering
    \includegraphics[width=0.5\linewidth]{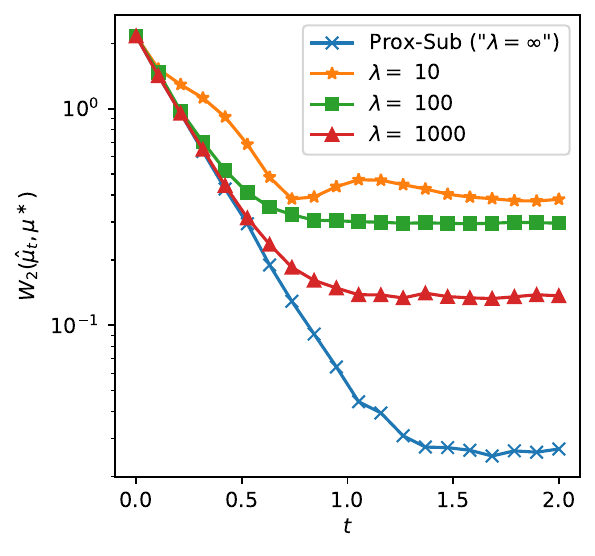}
    \caption{Wasserstein distances between the empirical measures $\hat \mu_n$ and the target $\mu^\ast$ in the two-dimensional TV-denoising experiment. We compare three different values of $\lambda$ and the algorithm Prox-Sub \cite{Habring2023} corresponding formally to $\lambda = \infty$. The discretization via partial dualization of $f\circ K$ introduces a bias between samples and target that is significantly larger than the pure discretization bias of Prox-Sub. As $\lambda$ grows, the iteration approximates the purely primal setting and the bias is reduced.}
    \label{fig:l2tv-2d-wasserstein}
\end{figure}

\subsubsection{Image Denoising}
We now sample from the posterior of an image denoising task. Due to the higher dimension, we can not compute accurate approximations of the Wasserstein distances to the target anymore. We therefore resort to illustrating the behaviour of the algorithm \eqref{eq:pd_sampling_update} using moment estimates. A ground truth image normalized to intensities $[0,1]$ is distorted by noise with standard deviation $0.25$ (PSNR $12.02\,$dB in the noisy observation). We then draw $10^6$ samples with a single chain and compute the sample mean and pixel-wise variance after a burn-in phase of $1e3$ samples. The experiment is repeated three times for values $\lambda = 1,10,100$, each with the same step size $\tau$. A fourth chain is computed using Prox-Sub with the same $\tau$. MMSE and pixel-wise variance estimates are reported in \cref{fig:l2tv-images-mmse-std-estimates}. The sample mean (MMSE) estimates produced by the different chains converge quickly and are very similar for all chains (PSNR values between $23.82\,$dB and $23.85\,$dB). The pixel-wise variance, however, shows the same effect of overdispersion for smaller levels of $\lambda$ as in the lower-dimensional example. 

While the `primal' samples $X^n$ are overdispersed for finite $\lambda$, the dual iterates $Y^n$ characteristically exhibit underdispersion (see \cref{fig:l2tv-images-dualstd-estimates}). This effect was consistent throughout all our numerical experiments (compare also \cref{fig:1dgaussian-toy-joint-distributions,fig:l2tv-2d-scatterplots}). A heuristic explanation for overdispersion in the primal and underdispersion in the dual variable lies in the fact that the step in the dual variable solves the optimality condition $y \in \partial f(Kx)$ only incompletely. On the particle level in continuous time, when the primal variable $X_t$ is driven towards the tail regions of its marginal distribution by the Brownian motion, the dual variable is not evolved entirely to the corresponding position $\partial f(KX_t)$ in the tails of its marginal, but rather remains near the mode of its marginal. The effect decreased with growing parameters $\lambda$ in all our experiments.

\newlength{\imageheight}%
\setlength{\imageheight}{0.22\linewidth}%
\begin{figure}[ht]
    \centering
    \begin{subfigure}{\imageheight}%
        \includegraphics[height=\imageheight]{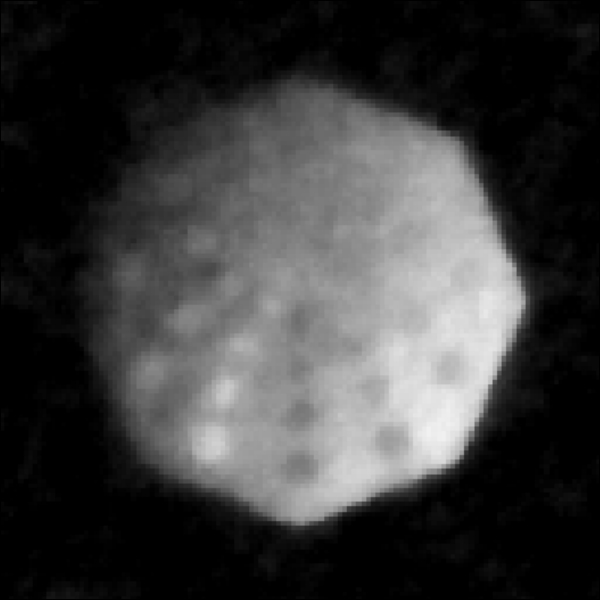}
    \end{subfigure}%
    \hspace{0.01\linewidth}%
    \begin{subfigure}{\imageheight}%
        \includegraphics[height=\imageheight]{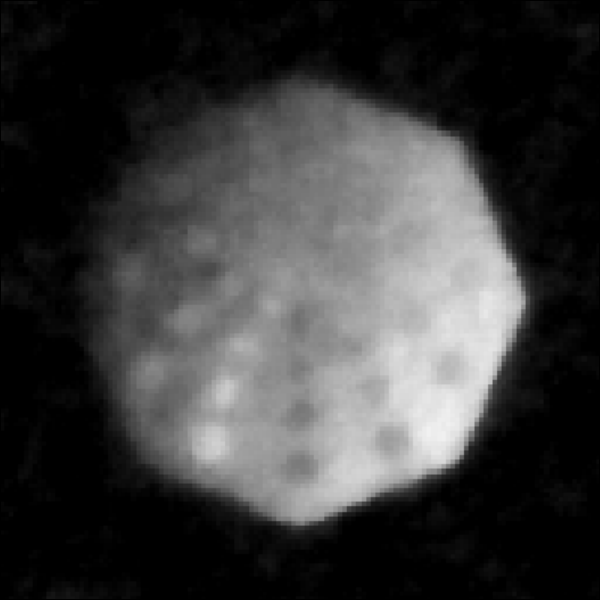}
    \end{subfigure}%
    \hspace{0.01\linewidth}%
    \begin{subfigure}{\imageheight}%
        \includegraphics[height=\imageheight]{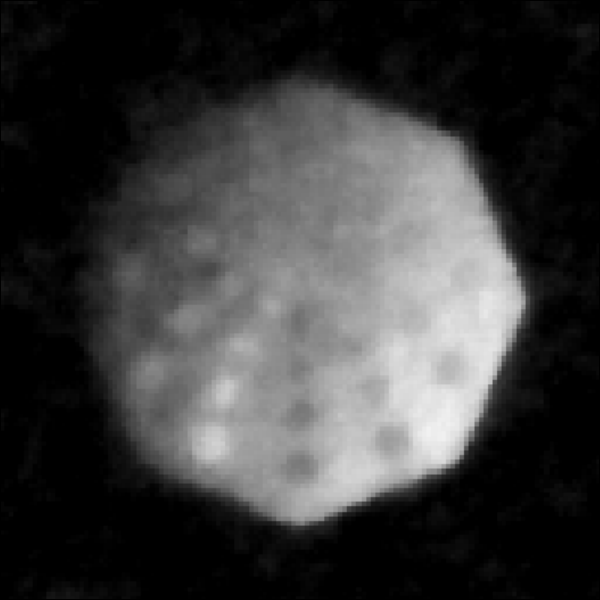}
    \end{subfigure}%
    \hspace{0.01\linewidth}%
    \begin{subfigure}{\imageheight}%
        \includegraphics[height=\imageheight]{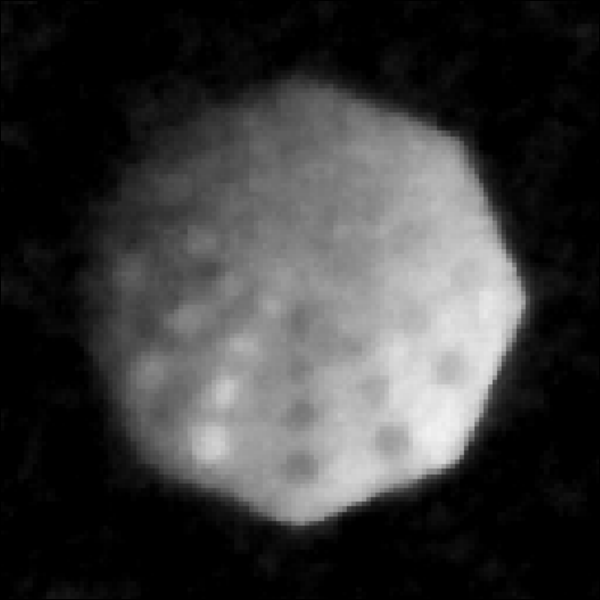}
    \end{subfigure}%
    \hspace{0.01\linewidth}%
    \begin{subfigure}{0.05\linewidth}%
        \includegraphics[height=\imageheight]{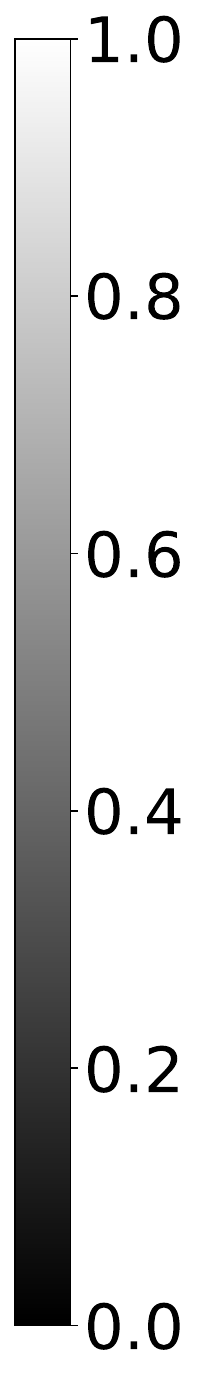}
    \end{subfigure}%
    \\%
    \vspace{0.005\linewidth}
    \begin{subfigure}{\imageheight}%
        \includegraphics[height=\imageheight]{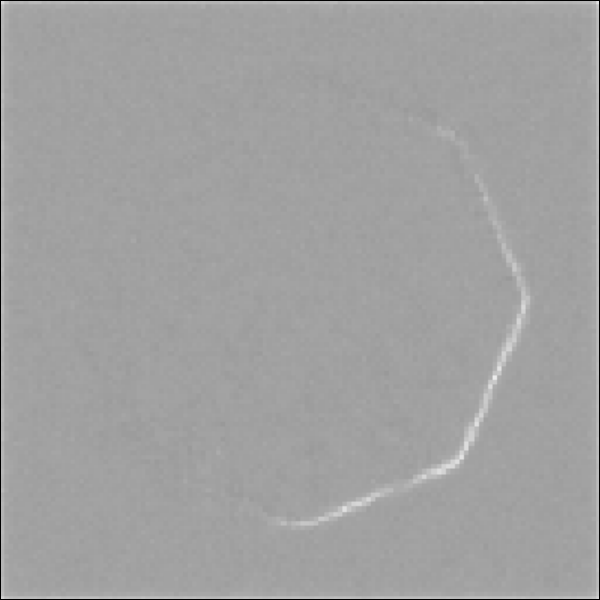}
    \end{subfigure}%
    \hspace{0.01\linewidth}%
    \begin{subfigure}{\imageheight}%
        \includegraphics[height=\linewidth]{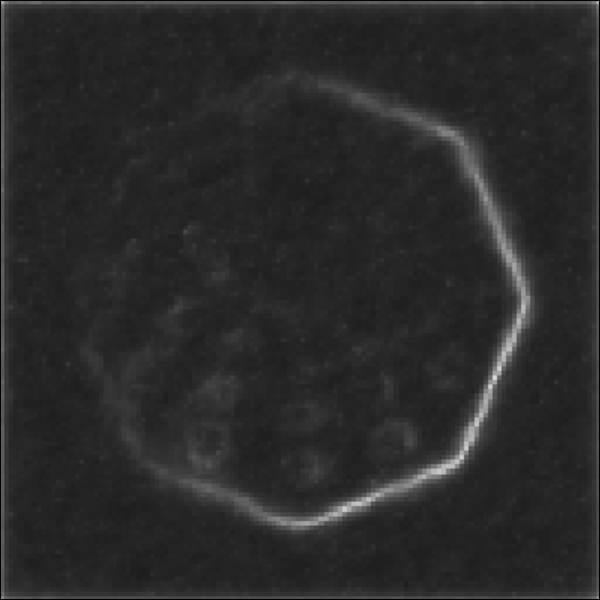}
    \end{subfigure}%
    \hspace{0.01\linewidth}%
    \begin{subfigure}{\imageheight}%
        \includegraphics[height=\linewidth]{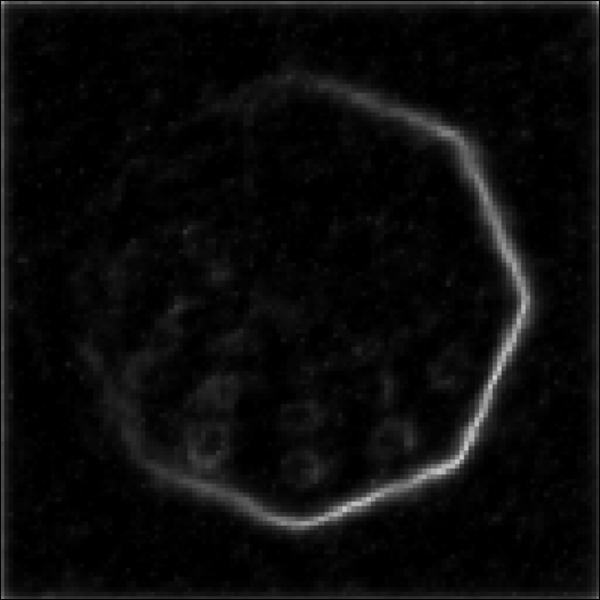}
    \end{subfigure}%
    \hspace{0.01\linewidth}%
    \begin{subfigure}{\imageheight}%
        \includegraphics[height=\linewidth]{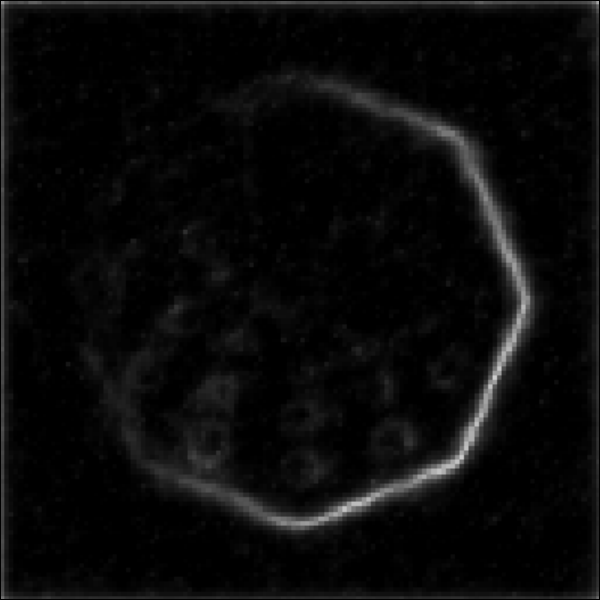}
    \end{subfigure}%
    \hspace{0.01\linewidth}%
    \begin{subfigure}{0.05\linewidth}%
        \includegraphics[height=\imageheight]{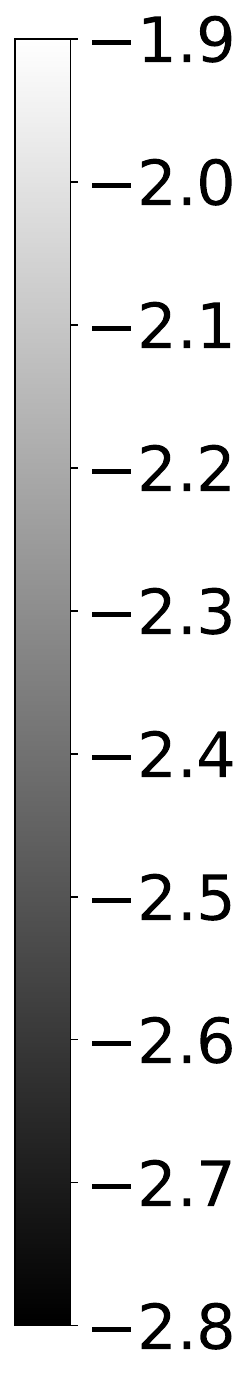}
    \end{subfigure}%
    \caption{MMSE estimates and pixel-wise variance computed via the posterior sample mean in the $l_2$-TV denoising experiment. Top row shows the MMSE estimates, bottom row the corresponding pixel-wise (logarithmically scaled) variances. From left to right results for $\lambda = 1,10,100$ and `$\lambda = \infty$' (using Prox-Sub) are shown.}
    \label{fig:l2tv-images-mmse-std-estimates}
\end{figure}

\begin{figure}[ht]
    \centering
    \begin{subfigure}{\imageheight}%
        \includegraphics[height=\imageheight]{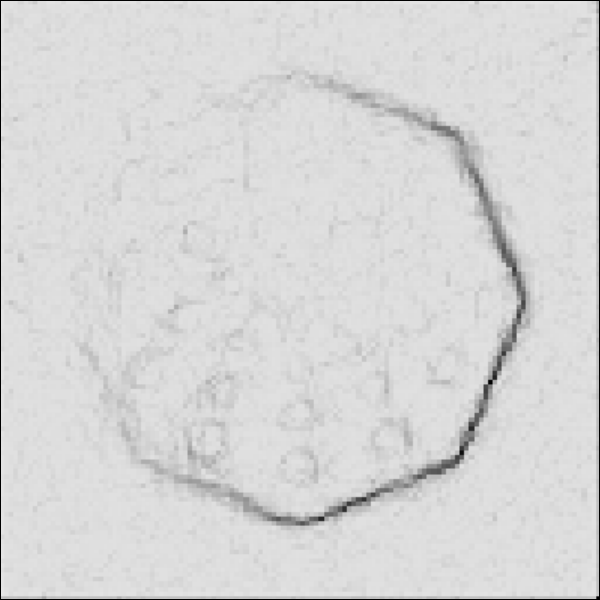}
    \end{subfigure}%
    \hspace{0.01\linewidth}%
    \begin{subfigure}{\imageheight}%
        \includegraphics[height=\imageheight]{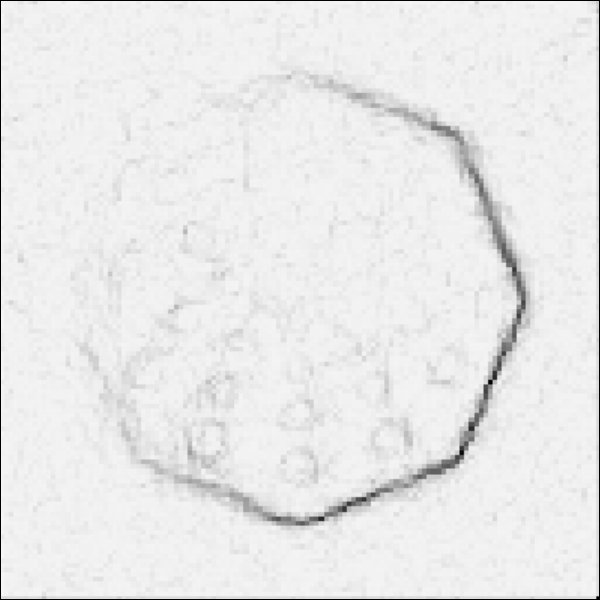}
    \end{subfigure}%
    \hspace{0.01\linewidth}%
    \begin{subfigure}{\imageheight}%
        \includegraphics[height=\imageheight]{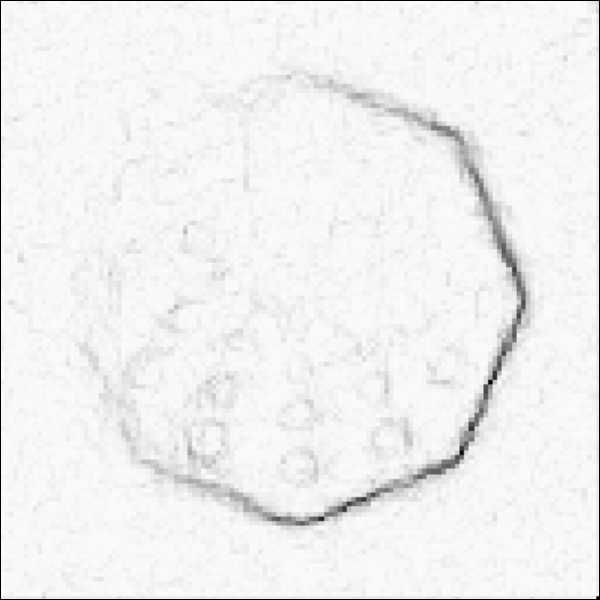}
    \end{subfigure}%
    \hspace{0.01\linewidth}%
    \begin{subfigure}{\imageheight}%
        \includegraphics[height=\imageheight]{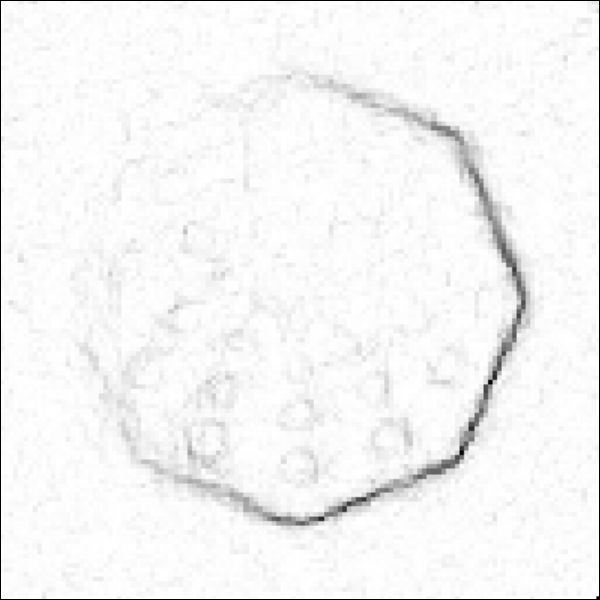}
    \end{subfigure}%
    \hspace{0.01\linewidth}%
    \begin{subfigure}{0.05\linewidth}%
        \includegraphics[height=\imageheight]{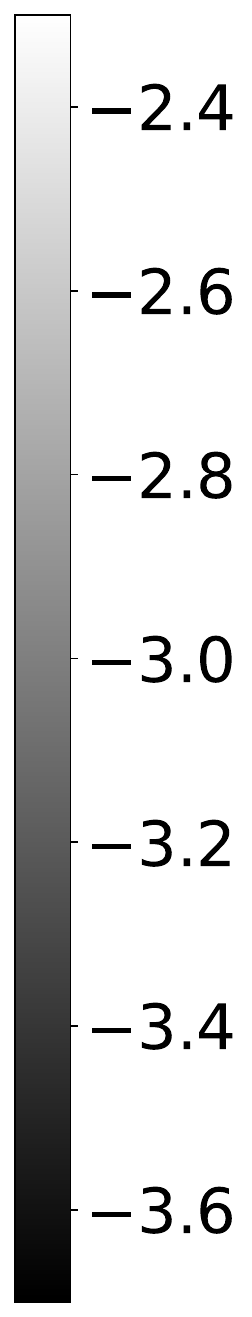}
    \end{subfigure}%
    \caption{Pixel-wise (logarithmic) standard deviation of the dual variables $Y^n$, where an $l_2$-norm over horizontal and vertical component was applied. From left to right, we show the results for $\lambda = 1,10,100$ and `$\lambda = \infty$' (Prox-Sub). For the latter case, the algorithm computes the dual variable as $Y^n \in \partial f(KX^n)$. For finite $\lambda$, the dual samples suffer from underdispersion, the effect vanishes as $\lambda$ grows and the dual values concentrate around the dual optimality condition.}
    \label{fig:l2tv-images-dualstd-estimates}
\end{figure}

\subsection{Total Generalized Variation Based Image Denoising}
In this experiment, we consider an example with an image prior based on second order total generalized variation (TGV). 
Like the TV functional, total generalized variation allows for discontinuous edges in images, but it generalizes TV in that it also employs higher-order derivatives. 
Acting as a penalty also on higher order image gradients, TGV-based MAP estimates avoid the typical staircasing effects of TV in smooth but non-constant regions of images. For optimization purposes, the second order TGV functional is usually defined as
\begin{gather}\label{eq:def-tgv}
    \TGV^2_\alpha(u) = \inf_{v \in V} J^2_\alpha(u,v), \\
    J^2_\alpha(u,v) = \alpha_1 \norm{\nabla u - v}_{2,1} + \alpha_0 \norm{E v}_{2,1}. \notag
\end{gather}
Here $u \in \calU$ is an image while $v$ is in the corresponding space $\calV = \calU\times \calU$ of (discretized) gradients of $u$. $\alpha_0,\alpha_1 > 0$ are regularization parameters. $\nabla = (\nabla_h,\nabla_v) : \calU \to \calV$ is a (forward discretized) gradient operator as in the case of TV, and $E : \calV \to \calW$ is a symmetrized (backward discretized) gradient operator that acts as second order derivative. We refer to Section 2.1 of \cite{Bredies2015a} for details on the implementation of the gradient operators.
In TGV-based variational minimization, the state space is typically expanded from the image space $\calU$ to the joint space $\calX = \calU \times \calV$ and the minimization over $u$ (in the variational problem) and $v$ (in \ref{eq:def-tgv}) is carried out simultaneously. By applying primal-dual optimization methods, the iteration can be carried out efficiently without the need to evaluate the infimum in \eqref{eq:def-tgv} explicitly \cite{Bredies2015a}.
In the same way, we want to avoid evaluating $\TGV^2_\alpha$ or its gradients explicitly in a sampling procedure. In order to recover the image $u$, we therefore propose to sample from a posterior of the form
$$ \mu^\ast(x) \propto p(\tilde x | x) \exp\left( - J^2_\alpha(u,v)\right),\qquad x = (u,v) \in \calU \times \calV = \calX, $$
where $p(\tilde x|x)$ is the likelihood, for denosing with normally distributed noise given as before by
$$- \log p(\tilde x|x) = \frac{1}{2\sigma_\epsilon^2} \norm{u - \tilde u}_\calU^2 ,\qquad x = (u,v), \tilde x = (\tilde u, \tilde v) \in \calX$$
We note that this is a relaxed version of regularization with $\TGV_\alpha^2$, since we dropped the requirement that $v$ solves \eqref{eq:def-tgv}. Practically, the prior $\exp(-J^2_\alpha(u,v))$ seems to yield very similar smoothing behaviour to the effect of total generalized variation, we leave its detailed analysis to future work.
Note that the chosen target log-density is in the necessary form \eqref{eq:target}: $g(x)$ is, as before, the negative log-likelihood $\norm{u-\tilde u}^2_\calU/(2\sigma_\epsilon^2)$ for all $x = (u,v)$. We define the linear operator
$$ K = \begin{pmatrix}
    \nabla & -I\\
    0 & E
\end{pmatrix}: \calX \to \calV \times \calW \eqqcolon \calY,$$
where $I$ is the identity on $\calV$. The functional $f : \calY \to \bbR$ is given by $f(y) = \alpha_1\norm{v}_{2,1} + \alpha_0\norm{w}_{2,1}$ for all $y = (v,w)$. Due to the separable structure in $v$ and $w$, the proximal mapping of the convex conjugate $f^\ast$ is simply a componentwise projection in the dual $l_2$-$l_\infty$-norms in the respective spaces $\calV$, $\calW$.

We run the experiment with empirically set values of $\lambda$ that make sure the variance is not strongly overdispersed while keeping the computational expense moderate -- recall that as $\lambda$ grows, $\tau$ needs to be chosen smaller, which slows down the convergence.
$10^5$ samples are drawn in each run, we report the resulting sample mean and pixel-wise variance in \cref{fig:l2tgv-images}.

\setlength{\imageheight}{0.3\linewidth}
\begin{figure}[ht]
    \centering
    \begin{subfigure}{\imageheight}%
        \includegraphics[height=\imageheight]{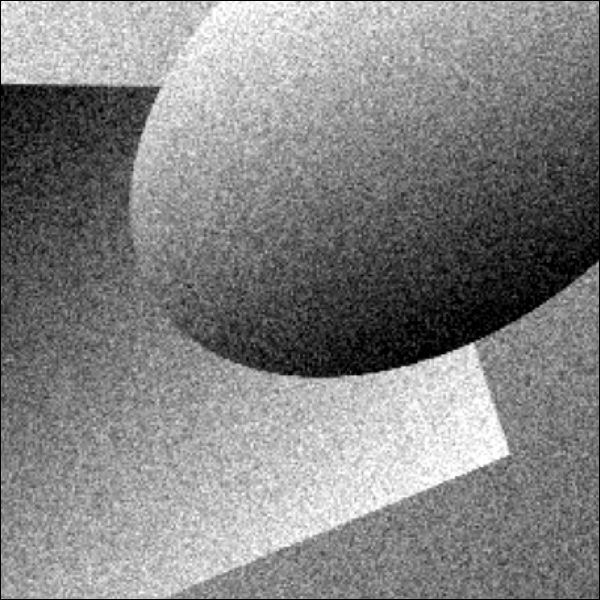}
    \end{subfigure}%
    \hfill%
    \begin{subfigure}{\imageheight}%
        \includegraphics[height=\imageheight]{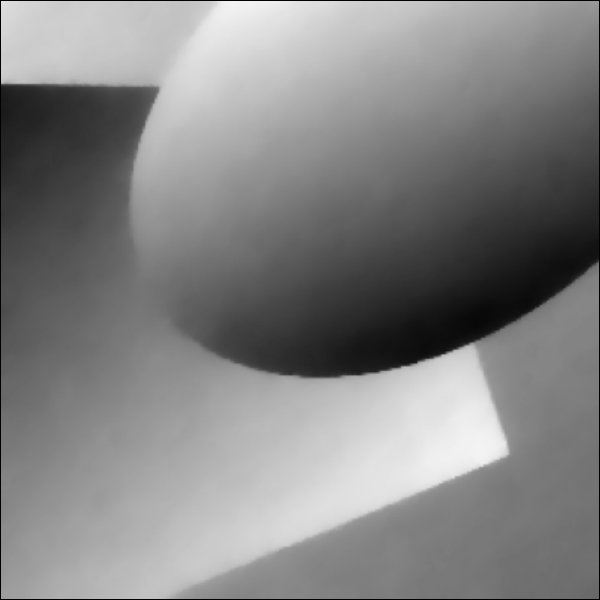}
    \end{subfigure}%
    \hspace{0.01\linewidth}%
    \begin{subfigure}{\imageheight}%
        \includegraphics[height=\imageheight]{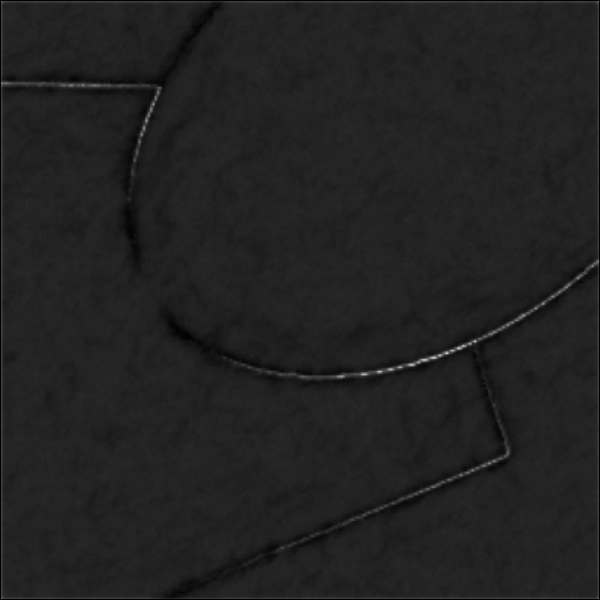}
    \end{subfigure}\\%
    \vspace{0.008\linewidth}
    \begin{subfigure}{\imageheight}%
        \includegraphics[height=\imageheight]{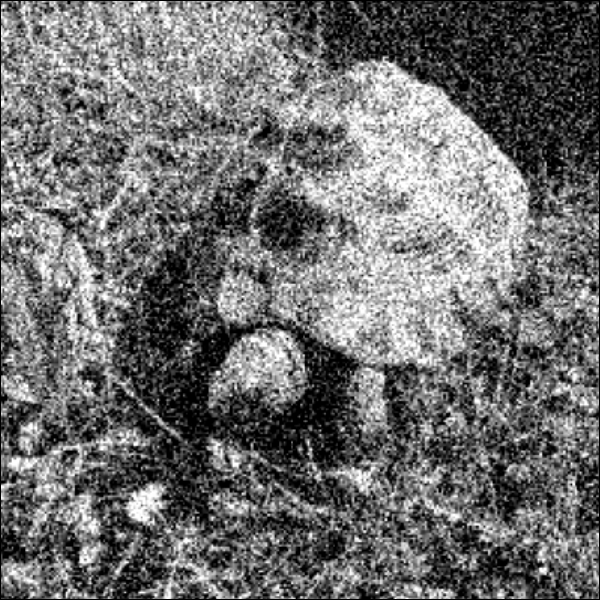}
    \end{subfigure}%
    \hfill%
    \begin{subfigure}{\imageheight}%
        \includegraphics[height=\imageheight]{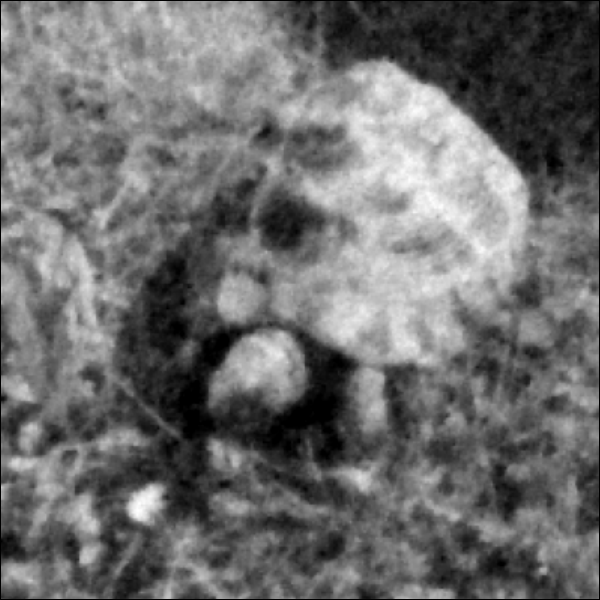}
    \end{subfigure}%
    \hspace{0.01\linewidth}%
    \begin{subfigure}{\imageheight}%
        \includegraphics[height=\imageheight]{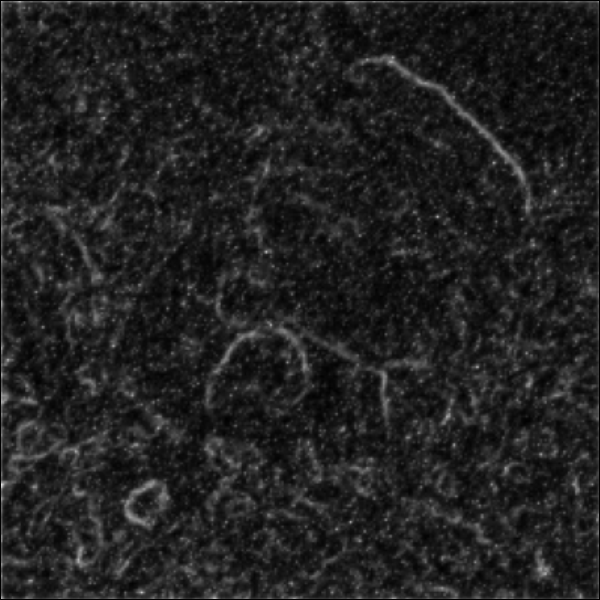}
    \end{subfigure}%
    \caption{Results from the TGV-based denoising experiment. Left: noisy observation $\tilde u$, Center: MMSE estimate via sample mean, Right: Log-scaled pixel-wise standard deviation. Top is a moderately noisy phantom designed for TGV-regularization ($\sigma_\epsilon=0.1$, PSNR of noisy image $19.98\,$dB, MMSE PSNR $34.00\,$dB), bottom a more strongly distorted, photorealistic image ($\sigma_\epsilon = 0.25$, PSNR of noisy image $12.02\,$dB, MMSE PSNR $20.08\,$dB).}
    \label{fig:l2tgv-images}
\end{figure}

\section{Conclusion and Open Problems}\label{sec:conclusion}
We have analyzed a recently proposed algorithm for sampling from log-concave, non-smooth potentials which makes use of alternating primal-dual steps as is well-known in convex non-smooth optimization. We showed that the sampling algorithm corresponds to a stochastic differential equation and formulated the mean field limit Fokker-Planck equation. Using a simple coupling argument, contraction of the continuous-time limit to a stationary solution was proved. Unlike standard overdamped Langevin diffusion, the primal-dual Fokker-Planck equation is not a gradient flow in the corresponding Wasserstein space and does not have the target distribution as its stationary solution. A proposed correction of the Fokker-Planck equation forcing the target to be its stationary solution is possible, but eventually not useful for numerical implementation. However, the target distribution can be related to the stationary solution of the uncorrected scheme in the limit of the ratio of primal and dual step sizes growing to infinity. For the unmodified scheme, we made use of coupling techniques to prove a bound on Wasserstein distance between the stationary distribution and the target for a finite step size ratio. 

With similar coupling arguments in discrete time, estimates in the typical spirit of convergence analysis of the optimization algorithm can be transferred to the sampling case. This allowed us to show stability of the sampling method in Wasserstein distance, resulting in the existence of a stationary measure in discrete time under strong convexity assumptions. Under the additional assumption of Lipschitz-smoothness of one of the potentials, we can bound the discretization error using a coupling between discrete and continuous time. This final bound implies a convergence result that relates the generated samples with the target. An important open question is whether the regularity assumptions on the potentials $f,g$ in the stability result and the discretization error bounds can be relaxed to not require differentiability.

Numerically, the results suggest that if there is a way to compute the subgradient of the dualized potential term, the additional bias due to the finite step size ratio makes the primal-dual samples consistently over-dispersed compared to discretizations of overdamped Langevin diffusion like the method of \cite{Habring2023}. We currently see the primal-dual method's use cases in two settings. If the subgradient of the dualized term is not available, but the proximal mapping of its convex conjugate is, the method is up to our knowledge the first applicable algorithm able to draw samples from an approximation of the posterior. Secondly, if the additional bias due to a larger finite step size ratio does not matter (because a correction step is added or because downstream tasks can be carried out with biased samples), the method can be applied with larger step sizes than a purely primal method, allowing for faster mixing time. To find out whether this makes the primal-dual method more efficient in applications, further numerical studies. Particularly, the combination of a proposal step using the primal-dual algorithm and a Metropolis-Hastings correction step seems like a promising method and should be analyzed further in future work.

\section*{Acknowledgements}
We thank Jan Modersitzki (Lübeck) for fruitful discussions and predicting non-concentration of the dual variable.

\printbibliography

@Article{Pereyra2016,
	author   = {Pereyra, Marcelo},
	journal  = {Statistics and Computing},
	title    = {Proximal {Markov} chain {Monte} {Carlo} algorithms},
	year     = {2016},
	issn     = {1573-1375},
	month    = jul,
	number   = {4},
	pages    = {745--760},
	volume   = {26},
	doi      = {10.1007/s11222-015-9567-4},
	keywords = {Bayesian inference, Convex analysis, High-dimensional statistics, Markov chain Monte Carlo, Proximal algorithms, Signal processing},
	language = {en},
}

@Article{Flamary2021,
	author  = {Remi Flamary and Nicolas Courty and Alexandre Gramfort and Mokhtar Z. Alaya and Aurelie Boisbunon and Stanislas Chambon and Laetitia Chapel and Adrien Corenflos and Kilian Fatras and Nemo Fournier and Leo Gautheron and Nathalie T.H. Gayraud and Hicham Janati and Alain Rakotomamonjy and Ievgen Redko and Antoine Rolet and Antony Schutz and Vivien Seguy and Danica J. Sutherland and Romain Tavenard and Alexander Tong and Titouan Vayer},
	journal = {Journal of Machine Learning Research},
	title   = {POT: Python Optimal Transport},
	year    = {2021},
	number  = {78},
	pages   = {1--8},
	volume  = {22},
	file    = {:Flamary2021 - POT- Python Optimal Transport.pdf:PDF},
	url     = {http://jmlr.org/papers/v22/20-451.html},
}

@Book{Bauschke2011,
  author    = {Heinz H. Bauschke and Patrick L. Combettes},
  publisher = {Springer New York},
  title     = {Convex Analysis and Monotone Operator Theory in Hilbert Spaces},
  year      = {2011},
  doi       = {10.1007/978-1-4419-9467-7},
}

@Article{Fournier2021,
  author    = {Nicolas Fournier and Beno{\^{i}}t Perthame},
  journal   = {{EMS} Surveys in Mathematical Sciences},
  title     = {Transport distances for {PDEs}: the coupling method},
  year      = {2021},
  month     = jan,
  number    = {1},
  pages     = {1--31},
  volume    = {7},
  doi       = {10.4171/emss/35},
  publisher = {European Mathematical Society - {EMS} - Publishing House {GmbH}},
}

@Article{Durmus2019,
  author    = {Durmus, Alain and Moulines, Éric},
  journal   = {Bernoulli},
  title     = {High-dimensional {Bayesian} inference via the unadjusted {Langevin} algorithm},
  year      = {2019},
  issn      = {1350-7265},
  month     = nov,
  number    = {4A},
  pages     = {2854--2882},
  volume    = {25},
  doi       = {10.3150/18-BEJ1073},
  publisher = {Bernoulli Society for Mathematical Statistics and Probability},
}

@Misc{Habring2023,
  author    = {Habring, Andreas and Holler, Martin and Pock, Thomas},
  title     = {Subgradient Langevin Methods for Sampling from Non-smooth Potentials},
  copyright = {arXiv.org perpetual, non-exclusive license},
  date      = {2023},
  doi       = {10.48550/ARXIV.2308.01417},
  file      = {:Habring2023 - Subgradient Langevin Methods for Sampling from Non-Smooth Potentials.pdf:PDF},
  keywords  = {Optimization and Control (math.OC), Computation (stat.CO), FOS: Mathematics, FOS: Mathematics, FOS: Computer and information sciences, FOS: Computer and information sciences, G.3; G.1.6, 65C40, 65C05, 68U10, 65C60},
  publisher = {arXiv},
}

@Misc{Lau2023,
  author    = {Lau, Tim Tsz-Kit and Liu, Han and Pock, Thomas},
  title     = {Non-Log-Concave and Nonsmooth Sampling via Langevin Monte Carlo Algorithms},
  copyright = {Creative Commons Attribution 4.0 International},
  date      = {2023},
  doi       = {10.48550/ARXIV.2305.15988},
  file      = {:Lau2023 - Non-Log-Concave and Nonsmooth Sampling via Langevin Monte Carlo Algorithms.pdf:PDF},
  keywords  = {Machine Learning (stat.ML), Machine Learning (cs.LG), Computation (stat.CO), Methodology (stat.ME), FOS: Computer and information sciences, FOS: Computer and information sciences},
  publisher = {arXiv},
}

@Misc{Narnhofer2022,
  author    = {Narnhofer, Dominik and Habring, Andreas and Holler, Martin and Pock, Thomas},
  title     = {Posterior-Variance-Based Error Quantification for Inverse Problems in Imaging},
  copyright = {arXiv.org perpetual, non-exclusive license},
  date      = {2022},
  doi       = {10.48550/ARXIV.2212.12499},
  file      = {:Narnhofer2022 - Posterior-Variance-Based Error Quantification for Inverse Problems in Imaging.pdf:PDF},
  keywords  = {Computer Vision and Pattern Recognition (cs.CV), Probability (math.PR), FOS: Computer and information sciences, FOS: Computer and information sciences, FOS: Mathematics, FOS: Mathematics, 68U10, 62F15, 65C40, 65C60, 65J22},
  publisher = {arXiv},
}

@Article{Hoermander1967,
  author       = {Lars Hörmander},
  title        = {Hypoelliptic second order differential equations},
  number       = {0},
  pages        = {147--171},
  volume       = {119},
  date         = {1967},
  doi          = {10.1007/bf02392081},
  file         = {:Hoermander1967 - Hypoelliptic second order differential equations.pdf:PDF},
  journaltitle = {Acta Mathematica},
  publisher    = {International Press of Boston},
}

@Book{Pavliotis2014,
  author    = {Grigorios A. Pavliotis},
  date      = {2014},
  title     = {Stochastic Processes and Applications},
  doi       = {10.1007/978-1-4939-1323-7},
  publisher = {Springer New York},
  file      = {:Probability/Pavliotis2014 - Stochastic Processes and Applications Diffusion Processes, the Fokker-Planck and Langevin Equations.pdf:PDF},
}

@Article{Chambolle2011,
  author   = {Chambolle, Antonin and Pock, Thomas},
  journal  = {Journal of Mathematical Imaging and Vision},
  title    = {A {First}-{Order} {Primal}-{Dual} {Algorithm} for {Convex} {Problems} with {Applications} to {Imaging}},
  year     = {2011},
  issn     = {1573-7683},
  month    = may,
  number   = {1},
  pages    = {120--145},
  volume   = {40},
}

@Misc{Cai2023,
  author    = {Cai, Ziruo and Li, Jinglai and Zhang, Xiaoqun},
  title     = {Approximate Primal-Dual Fixed-Point based Langevin Algorithms for Non-smooth Convex Potentials},
  copyright = {arXiv.org perpetual, non-exclusive license},
  date      = {2023},
  doi       = {10.48550/ARXIV.2304.04544},
  file      = {:Cai2023 - Approximate Primal-Dual Fixed-Point Based Langevin Algorithms for Non-Smooth Convex Potentials.pdf:PDF},
  keywords  = {Numerical Analysis (math.NA), Optimization and Control (math.OC), Computation (stat.CO), FOS: Mathematics, FOS: Mathematics, FOS: Computer and information sciences, FOS: Computer and information sciences},
  publisher = {arXiv},
}

@Misc{Ehrhardt2023,
  author        = {Matthias J. Ehrhardt and Lorenz Kuger and Carola-Bibiane Schönlieb},
  title         = {Proximal Langevin Sampling With Inexact Proximal Mapping},
  eprint        = {2306.17737},
  archiveprefix = {arXiv},
  file          = {:Ehrhardt2023 - Proximal Langevin Sampling With Inexact Proximal Mapping.pdf:PDF},
  primaryclass  = {stat.CO},
  year          = {2023},
}

@InProceedings{Wibisono2018,
  author     = {Wibisono, Andre},
  booktitle  = {Proceedings of the 31st Conference On Learning Theory},
  title      = {Sampling as optimization in the space of measures: {The} {Langevin} dynamics as a composite optimization problem},
  year       = {2018},
  month      = jul,
  pages      = {2093--3027},
  publisher  = {PMLR},
}

@Article{Dalalyan2019,
  author   = {Dalalyan, Arnak S. and Karagulyan, Avetik},
  journal  = {Stochastic Processes and their Applications},
  title    = {User-friendly guarantees for the {Langevin} {Monte} {Carlo} with inaccurate gradient},
  year     = {2019},
  issn     = {0304-4149},
  month    = dec,
  number   = {12},
  pages    = {5278--5311},
  volume   = {129},
  doi      = {10.1016/j.spa.2019.02.016},
}

@Article{Durmus2019a,
  author     = {Durmus, Alain and Majewski, Szymon and Miasojedow, B\l{}a\.{z}ej},
  journal    = {J. Mach. Learn. Res.},
  title      = {Analysis of Langevin Monte Carlo via Convex Optimization},
  year       = {2019},
  issn       = {1532-4435},
  number     = {1},
  pages      = {2666–2711},
  volume     = {20},
}

@Article{Chambolle2016,
  author    = {Antonin Chambolle and Thomas Pock},
  title     = {An introduction to continuous optimization for imaging},
  doi       = {10.1017/s096249291600009x},
  pages     = {161--319},
  volume    = {25},
  file      = {:Chambolle2016 - An introduction to continuous optimization for imaging.pdf:PDF},
  journal   = {Acta Numerica},
  month     = {5},
  publisher = {Cambridge University Press ({CUP})},
  year      = {2016},
}

@Article{Candes2006,
  author       = {Candes, E.J. and Romberg, J. and Tao, T.},
  title        = {Robust uncertainty principles: exact signal reconstruction from highly incomplete frequency information},
  issn         = {0018-9448},
  number       = {2},
  pages        = {489--509},
  volume       = {52},
  date         = {2006-02},
  doi          = {10.1109/tit.2005.862083},
  file         = {:Candes2006 - Robust uncertainty principles exact signal reconstruction from highly incomplete frequency information.pdf:PDF},
  journaltitle = {IEEE Transactions on Information Theory},
  publisher    = {Institute of Electrical and Electronics Engineers (IEEE)},
}

@Article{Bredies2015a,
  author       = {Bredies, Kristian and Holler, Martin},
  date         = {2015-01},
  journaltitle = {SIAM Journal on Imaging Sciences},
  title        = {A TGV-Based Framework for Variational Image Decompression, Zooming, and Reconstruction. Part II: Numerics},
  doi          = {10.1137/15m1023877},
  issn         = {1936-4954},
  number       = {4},
  pages        = {2851--2886},
  volume       = {8},
  file         = {:Bredies2015a - A TGV-Based Framework for Variational Image Decompression, Zooming, and Reconstruction. Part II Numerics.pdf:PDF},
  publisher    = {Society for Industrial & Applied Mathematics (SIAM)},
}

@Article{Roberts1996,
  author    = {Gareth O. Roberts and Richard L. Tweedie},
  journal   = {Bernoulli},
  title     = {Exponential Convergence of Langevin Distributions and Their Discrete Approximations},
  year      = {1996},
  month     = {12},
  number    = {4},
  pages     = {341},
  volume    = {2},
  doi       = {10.2307/3318418},
  file      = {:Roberts1996 - Exponential Convergence of Langevin Distributions and Their Discrete Approximations.pdf:PDF},
  publisher = {{JSTOR}},
}

@Article{Durmus2017,
  author       = {Alain Durmus and {\'{E}}ric Moulines},
  date         = {2017-06},
  journaltitle = {The Annals of Applied Probability},
  title        = {Nonasymptotic convergence analysis for the unadjusted Langevin algorithm},
  doi          = {10.1214/16-aap1238},
  number       = {3},
  volume       = {27},
  file         = {:Durmus2017 - Nonasymptotic convergence analysis for the unadjusted Langevin algorithm.pdf:PDF},
  publisher    = {Institute of Mathematical Statistics},
}

@Article{Durmus2022,
  author       = {Alain Durmus and {\'{E}}ric Moulines and Marcelo Pereyra},
  date         = {2022-11},
  journaltitle = {{SIAM} Review},
  title        = {A Proximal Markov Chain Monte Carlo Method for Bayesian Inference in Imaging Inverse Problems: When Langevin Meets Moreau},
  doi          = {10.1137/22m1522917},
  number       = {4},
  pages        = {991--1028},
  volume       = {64},
  file         = {:Durmus2022 - A Proximal Markov Chain Monte Carlo Method for Bayesian Inference in Imaging Inverse Problems- When Langevin Meets Moreau.pdf:PDF},
  publisher    = {Society for Industrial {\&} Applied Mathematics ({SIAM})},
}

@InProceedings{Ahn2021,
  author    = {Kwangjun Ahn and Sinho Chewi},
  booktitle = {Advances in Neural Information Processing Systems},
  title     = {Efficient constrained sampling via the mirror-Langevin algorithm},
  editor    = {A. Beygelzimer and Y. Dauphin and P. Liang and J. Wortman Vaughan},
  url       = {https://openreview.net/forum?id=vh7qBSDZW3G},
  file      = {:Ahn2021 - Efficient Constrained Sampling Via the Mirror Langevin Algorithm.pdf:PDF},
  year      = {2021},
}

@Article{Girolami2011,
  author       = {Girolami, Mark and Calderhead, Ben},
  date         = {2011-03},
  journaltitle = {Journal of the Royal Statistical Society Series B: Statistical Methodology},
  title        = {Riemann Manifold Langevin and Hamiltonian Monte Carlo Methods},
  doi          = {10.1111/j.1467-9868.2010.00765.x},
  issn         = {1467-9868},
  number       = {2},
  pages        = {123--214},
  volume       = {73},
  file         = {:Girolami2011 - Riemann Manifold Langevin and Hamiltonian Monte Carlo methods.pdf:PDF},
  publisher    = {Oxford University Press (OUP)},
}

@InProceedings{Li2021,
  author    = {Li, Ruilin and Tao, Molei and Vempala, Santosh S. and Wibisono, Andre},
  booktitle = {Proceedings of The 33rd International Conference on Algorithmic Learning Theory},
  title     = {The Mirror Langevin Algorithm Converges with Vanishing Bias},
  year      = {2022},
  editor    = {Dasgupta, Sanjoy and Haghtalab, Nika},
  month     = {29 Mar--01 Apr},
  pages     = {718--742},
  publisher = {PMLR},
  series    = {Proceedings of Machine Learning Research},
  volume    = {167},
  abstract  = {The technique of modifying the geometry of a problem from Euclidean to Hessian metric has proved to be quite effective in optimization, and has been the subject of study for sampling. The Mirror Langevin Diffusion (MLD) is a sampling analogue of mirror flow in continuous time, and it has nice convergence properties under log-Sobolev or Poincare inequalities relative to the Hessian metric. In discrete time, a simple discretization of MLD is the Mirror Langevin Algorithm (MLA), which was shown to have a biased convergence guarantee with a non-vanishing bias term (does not go to zero as step size goes to zero). This raised the question of whether we need a better analysis or a better discretization to achieve a vanishing bias. Here we study the Mirror Langevin Algorithm and show it indeed has a vanishing bias. We apply mean-square analysis to show the mixing time bound for MLA under the modified self-concordance condition.},
  file      = {:Li2021 - The Mirror Langevin Algorithm Converges with Vanishing Bias.pdf:PDF},
  pdf       = {https://proceedings.mlr.press/v167/li22b/li22b.pdf},
  url       = {https://proceedings.mlr.press/v167/li22b.html},
}

@Article{Bubeck2018,
  author    = {Sébastien Bubeck and Ronen Eldan and Joseph Lehec},
  journal   = {Discrete and Computational Geometry},
  title     = {Sampling from a Log-Concave Distribution with Projected Langevin Monte Carlo},
  year      = {2018},
  month     = {apr},
  number    = {4},
  pages     = {757--783},
  volume    = {59},
  doi       = {10.1007/s00454-018-9992-1},
  publisher = {Springer Science and Business Media {LLC}},
}

@Article{Melidonis2023,
  author       = {Melidonis, Savvas and Dobson, Paul and Altmann, Yoann and Pereyra, Marcelo and Zygalakis, Konstantinos},
  date         = {2023-07},
  journaltitle = {SIAM Journal on Imaging Sciences},
  title        = {Efficient Bayesian Computation for Low-Photon Imaging Problems},
  doi          = {10.1137/22m1502240},
  issn         = {1936-4954},
  number       = {3},
  pages        = {1195--1234},
  volume       = {16},
  publisher    = {Society for Industrial & Applied Mathematics (SIAM)},
}

@Article{Jordan1998,
  author       = {Richard Jordan and David Kinderlehrer and Felix Otto},
  date         = {1998-01},
  journaltitle = {{SIAM} Journal on Mathematical Analysis},
  title        = {The Variational Formulation of the Fokker--Planck Equation},
  doi          = {10.1137/s0036141096303359},
  number       = {1},
  pages        = {1--17},
  volume       = {29},
  file         = {:Jordan1998 - The Variational Formulation of the Fokker--Planck Equation.pdf:PDF},
  publisher    = {Society for Industrial {\&} Applied Mathematics ({SIAM})},
}

@Article{Durmus2018,
  author     = {Durmus, Alain and Moulines, Éric and Pereyra, Marcelo},
  journal    = {SIAM Journal on Imaging Sciences},
  title      = {Efficient {Bayesian} {Computation} by {Proximal} {Markov} {Chain} {Monte} {Carlo}: {When} {Langevin} {Meets} {Moreau}},
  year       = {2018},
  month      = jan,
  number     = {1},
  pages      = {473--506},
  volume     = {11},
  doi        = {10.1137/16M1108340},
  publisher  = {Society for Industrial and Applied Mathematics},
}

@InProceedings{Salim2020,
  author    = {Salim, Adil and Richtarik, Peter},
  booktitle = {Advances in Neural Information Processing Systems},
  title     = {Primal Dual Interpretation of the Proximal Stochastic Gradient Langevin Algorithm},
  year      = {2020},
  editor    = {H. Larochelle and M. Ranzato and R. Hadsell and M.F. Balcan and H. Lin},
  pages     = {3786--3796},
  publisher = {Curran Associates, Inc.},
  volume    = {33},
  file      = {:Salim2020 - Primal Dual Interpretation of the Proximal Stochastic Gradient Langevin Algorithm.pdf:PDF},
  url       = {https://proceedings.neurips.cc/paper/2020/file/2779fda014fbadb761f67dd708c1325e-Paper.pdf},
}

@Misc{Klatzer2023,
  author    = {Klatzer, Teresa and Dobson, Paul and Altmann, Yoann and Pereyra, Marcelo and Sanz-Serna, Jesús María and Zygalakis, Konstantinos C.},
  date      = {2023},
  title     = {Accelerated Bayesian imaging by relaxed proximal-point Langevin sampling},
  doi       = {10.48550/ARXIV.2308.09460},
  copyright = {arXiv.org perpetual, non-exclusive license},
  file      = {:Klatzer2023 - Accelerated Bayesian imaging by relaxed proximal-point Langevin sampling.pdf:PDF},
  keywords  = {Computation (stat.CO), Computer Vision and Pattern Recognition (cs.CV), Numerical Analysis (math.NA), Machine Learning (stat.ML), FOS: Computer and information sciences, FOS: Computer and information sciences, FOS: Mathematics, FOS: Mathematics, 65C40, 68U10, 62F15, 65C60, 65J22, 68W25},
  publisher = {arXiv},
}

@InProceedings{Bernton2018,
  author    = {Bernton, Espen},
  booktitle = {Proceedings of the 31st Conference On Learning Theory},
  title     = {{L}angevin {M}onte {C}arlo and {JKO} splitting},
  year      = {2018},
  editor    = {Bubeck, Sébastien and Perchet, Vianney and Rigollet, Philippe},
  month     = {06--09 Jul},
  pages     = {1777--1798},
  publisher = {PMLR},
  series    = {Proceedings of Machine Learning Research},
  volume    = {75},
  abstract  = {Algorithms based on discretizing Langevin diffusion are popular tools for sampling from high-dimensional distributions. We develop novel connections between such Monte Carlo algorithms, the theory of Wasserstein gradient flow, and the operator splitting approach to solving PDEs. In particular, we show that a proximal version of the Unadjusted Langevin Algorithm corresponds to a scheme that alternates between solving the gradient flows of two specific functionals on the space of probability measures. Using this perspective, we derive some new non-asymptotic results on the convergence properties of this algorithm.},
  file      = {:Bernton2018 - Langevin Monte Carlo and JKO splitting.pdf:PDF},
  pdf       = {http://proceedings.mlr.press/v75/bernton18a/bernton18a.pdf},
  url       = {https://proceedings.mlr.press/v75/bernton18a.html},
}

@InProceedings{Brosse2017,
  author    = {Brosse, Nicolas and Durmus, Alain and Moulines, Éric and Pereyra, Marcelo},
  booktitle = {Proceedings of the 2017 Conference on Learning Theory},
  title     = {Sampling from a log-concave distribution with compact support with proximal Langevin Monte Carlo},
  year      = {2017},
  editor    = {Kale, Satyen and Shamir, Ohad},
  month     = {07--10 Jul},
  pages     = {319--342},
  publisher = {PMLR},
  series    = {Proceedings of Machine Learning Research},
  volume    = {65},
  abstract  = {This paper presents a detailed theoretical analysis of the Langevin Monte Carlo sampling algorithm recently introduced in Durmus et al. (Efficient Bayesian computation by proximal Markov chain Monte Carlo: when Langevin meets Moreau, 2016) when applied to log-concave probability distributions that are restricted to a convex body $K$. This method relies on a regularisation procedure involving the Moreau-Yosida envelope of the indicator function associated with $K$. Explicit convergence bounds in total variation norm and in Wasserstein distance of order $1$ are established. In particular, we show that the complexity of this algorithm given a first order oracle is polynomial in the dimension of the state space. Finally, some numerical experiments are presented to compare our method with competing MCMC approaches from the literature.},
  file      = {:Brosse2017 - Sampling from a log-concave distribution with compact support with proximal Langevin Monte Carlo.pdf:PDF},
  pdf       = {http://proceedings.mlr.press/v65/brosse17a/brosse17a.pdf},
  url       = {https://proceedings.mlr.press/v65/brosse17a.html},
}

@InProceedings{Lau2022,
  author    = {Lau, Tim Tsz-Kit and Liu, Han},
  booktitle = {Proceedings of the 39th International Conference on Machine Learning},
  title     = {{B}regman Proximal {L}angevin {M}onte {C}arlo via {B}regman-Moreau Envelopes},
  year      = {2022},
  editor    = {Chaudhuri, Kamalika and Jegelka, Stefanie and Song, Le and Szepesvari, Csaba and Niu, Gang and Sabato, Sivan},
  month     = {17--23 Jul},
  pages     = {12049--12077},
  publisher = {PMLR},
  series    = {Proceedings of Machine Learning Research},
  volume    = {162},
  abstract  = {We propose efficient Langevin Monte Carlo algorithms for sampling distributions with nonsmooth convex composite potentials, which is the sum of a continuously differentiable function and a possibly nonsmooth function. We devise such algorithms leveraging recent advances in convex analysis and optimization methods involving Bregman divergences, namely the Bregman–Moreau envelopes and the Bregman proximity operators, and in the Langevin Monte Carlo algorithms reminiscent of mirror descent. The proposed algorithms extend existing Langevin Monte Carlo algorithms in two aspects—the ability to sample nonsmooth distributions with mirror descent-like algorithms, and the use of the more general Bregman–Moreau envelope in place of the Moreau envelope as a smooth approximation of the nonsmooth part of the potential. A particular case of the proposed scheme is reminiscent of the Bregman proximal gradient algorithm. The efficiency of the proposed methodology is illustrated with various sampling tasks at which existing Langevin Monte Carlo methods are known to perform poorly.},
  file      = {:Lau2022 - Bregman Proximal Langevin Monte Carlo via Bregman–Moreau Envelopes.pdf:PDF},
  pdf       = {https://proceedings.mlr.press/v162/lau22a/lau22a.pdf},
  url       = {https://proceedings.mlr.press/v162/lau22a.html},
}

@InProceedings{Wang2024,
  title = 	 {Open problem: Convergence of single-timescale mean-field Langevin descent-ascent for two-player zero-sum games},
  author =       {Wang, Guillaume and Chizat, L{\'e}na\"{i}c},
  booktitle = 	 {Proceedings of Thirty Seventh Conference on Learning Theory},
  pages = 	 {5345--5350},
  year = 	 {2024},
  editor = 	 {Agrawal, Shipra and Roth, Aaron},
  volume = 	 {247},
  series = 	 {Proceedings of Machine Learning Research},
  month = 	 {30 Jun--03 Jul},
  publisher =    {PMLR},
  pdf = 	 {https://proceedings.mlr.press/v247/wang24c/wang24c.pdf},
  url = 	 {https://proceedings.mlr.press/v247/wang24c.html},
}

@InProceedings{Dalalyan2017a,
  author    = {Dalalyan, Arnak},
  booktitle = {Proceedings of the 2017 Conference on Learning Theory},
  title     = {Further and stronger analogy between sampling and optimization: Langevin Monte Carlo and gradient descent},
  editor    = {Kale, Satyen and Shamir, Ohad},
  pages     = {678--689},
  publisher = {PMLR},
  series    = {Proceedings of Machine Learning Research},
  url       = {https://proceedings.mlr.press/v65/dalalyan17a.html},
  volume    = {65},
  month     = {07--10 Jul},
  year      = {2017},
}
\end{document}